\def\sym{\mathrm{Sym}}
\def\G{\mathcal{G}}
\def\L{\mathscr{L}}
\def\I{\mathscr{I}}
\def\R{\mathbb{R}}
\def\P{\mathbb{P}}
\def\lca{\mathrm{lca}}
\def\pa{\mathrm{pa}}
\def\Int{\mathrm{Int}}
\def\C{\mathbb{C}}
\def\T{\mathcal{T}}
\def\desc{\mathrm{desc}}
\def\Int{\mathrm{Int}}
\def\Lv{\mathrm{Lv}}
\def\cL{\mathcal{L}}
\def\height{\mathrm{ht}}
\def\rZ{\mathrm{Z}}
\def\CC{\mathcal{C}}
\definecolor{darkgreen}{rgb}{0,0.8,0}
\definecolor{darkyellow}{rgb}{0.95,0.85,0}
\newtheorem{theorem}{Theorem}
\newtheorem{remark}[theorem]{Remark} 
\newtheorem{example}[theorem]{Example} 
\newtheorem{conjecture}[theorem]{Conjecture} 
\newtheorem{corollary}[theorem]{Corollary}
\newtheorem{lemma}[theorem]{Lemma}
\newtheorem{proposition}[theorem]{Proposition}
\begin{document}

\begin{frontmatter}

%% Title, authors and addresses

%% use the tnoteref command within \mathcal{T}itle for footnotes;
%% use the tnotetext command for theassociated footnote;
%% use the fnref command within \author or \affiliation for footnotes;
%% use the fntext command for theassociated footnote;
%% use the corref command within \author for corresponding author footnotes;
%% use the cortext command for theassociated footnote;
%% use the ead command for the email address,
%% and the form \ead[url] for the home page:
%% \mathcal{T}itle{Title\mathcal{T}noteref{label1}}
%% \mathcal{T}notetext[label1]{}
%% \author{Name\corref{cor1}\fnref{label2}}
%% \ead{email address}
%% \ead[url]{home page}
%% \fntext[label2]{}
%% \cortext[cor1]{}
%% \affiliation{organization={},
%%             addressline={},
%%             city={},
%%             postcode={},
%%             state={},
%%             country={}}
%% \fntext[label3]{}

\title{Toric Multivariate Gaussian Models from Symmetries in a Tree}

%% use optional labels to link authors explicitly to addresses:
%% \author[label1,label2]{}
%% \affiliation[label1]{organization={},
%%             addressline={},
%%             city={},
%%             postcode={},
%%             state={},
%%             country={}}
%%
%% \affiliation[label2]{organization={},
%%             addressline={},
%%             city={},
%%             postcode={},
%%             state={},
%%             country={}}

\author[1]{Emma Cardwell}
% \corref{cor1}%
% \fnref{fn1}}
\ead{emma_cardwell@brown.edu}

\author[3,4,5]{Aida Maraj%
% \fnref{fn2}
}
\ead{maraj@mpi-cbg.de}

\author[2]{\'Alvaro Ribot%
% \fnref{fn3}
}
\ead{aribotbarrado@g.harvard.edu}

\affiliation[1]{Brown University}
\affiliation[2]{Harvard University}
\affiliation[3]{Max Planck Institute of Molecular Cell Biology and Genetics}
\affiliation[4]{Center for Systems Biology Dresden}
\affiliation[5]{Technical University Dresden}

%% Abstract
\begin{abstract}
Given a rooted tree~$\mathcal{T}$ on~$n$ non-root leaves with colored and zeroed nodes, we construct a linear space~$L_\mathcal{T}$ of~$n\times n$ symmetric matrices with constraints determined by the combinatorics of the tree. When~$L_\mathcal{T}$ represents the covariance matrices of a Gaussian model, it provides natural generalizations of Brownian motion tree (BMT) models in phylogenetics and a step toward a more accurate model for phylogenetic networks with symmetries for species hybridization. When~$L_\mathcal{T}$ represents a space of concentration matrices of a Gaussian model, it gives certain colored Gaussian graphical models, which we refer to as BMT derived models. We investigate conditions under which the reciprocal variety~$L_\mathcal{T}^{-1}$ is toric. Relying on the birational isomorphism of the inverse matrix map, we show that if the BMT derived graph of~$\mathcal{T}$ is vertex-regular and a block graph,  under the derived Laplacian transformation, which we introduce, ~$L_\mathcal{T}^{-1}$ is the vanishing locus of a toric ideal. This ideal is given by the sum of the toric ideal of the Gaussian graphical model on the block graph, the toric ideal of the original BMT model, and binomial linear conditions coming from vertex-regularity. To this end, we provide monomial parametrizations for these toric models realized through paths among leaves in~$\mathcal{T}$.
\end{abstract}

%%Graphical abstract

%% Keywords
\begin{keyword}
%% keywords here, in the form: keyword \sep keyword
algebraic statistics \sep toric geometry \sep graph derived Laplacian transformations \sep Brownian motion tree models \sep Gaussian graphical models \sep phylogenetic networks
%% PACS codes here, in the form: \PACS code \sep code

%% MSC codes here, in the form: \MSC code \sep code
%% or \MSC[2008] code \sep code (2000 is the default)

\MSC[2020] 62R01 \sep 14M25

\end{keyword}

\end{frontmatter}

%% Add \usepackage{lineno} before \begin{document} and uncomment 
%% following line to enable line numbers
%% \linenumbers

%% main text
%%

\section{Introduction}

A multivariate Gaussian distribution of a random vector $\mathbf{X}=(X_1,\ldots,X_n)$ is determined by its mean and its \emph{covariance matrix}, which is an~$n {\times} n$ symmetric positive definite matrix. 
A Gaussian model is a {family} of Gaussian distributions feasible for~$\mathbf{X}$. By setting the mean equal to zero, we can identify the Gaussian model with its parameter space of covariance matrices or, alternatively, the space of the inverses of covariance matrices, known as \emph{concentration} or \emph{precision matrices}. This paper considers linear spaces of symmetric matrices determined by a tree with colored and zeroed nodes. We investigate when its reciprocal space is a toric variety; that is, it is isomorphic to the solution set of a toric ideal.

In applications, a wide class of Gaussian models is represented by linear conditions in a symmetric matrix. Interpreted as linear conditions in the covariance matrices, these give natural generalizations of \emph{Brownian motion tree (BMT) models} in phylogenetics. BMT models,  first introduced by Felsenstein \cite{felsenstein1973maximum}, are used to test selective pressure \cite{cooper2010body,freckleton2006detecting},  to represent continuous molecular traits \cite{brawand2011evolution}, to serve as a null model of evolution under genetic drift
\cite{schraiber2013inferring}, and in other applications outside biology %, such as in internet network tomography
 \cite{eriksson2010toward,tsang2004network}. Recent advances in the representation of BMT models as toric varieties \cite{sturmfels2019brownian} have led to progress in maximum likelihood estimation of these models
\cite{boege2021reciprocal,coons2024maximum,truell2022maximumlikelihoodestimationbrownian}. When treated as linear conditions on the concentration matrices, our models are types of \emph{colored Gaussian graphical (CGG) models}, which we will refer to as BMT-derived models. Colored Gaussian graphical models, introduced by Hojsgaard and Lauritzen in \cite{hojsgaard2008}, are more detailed versions of Gaussian graphical models, used to study gene regulatory networks where symmetries are imposed among genes with similar expression patterns~\cite{bibby1979multivariate,toh2002,vinciotti2016,wit2015},  to analyze longitudinal data on the performance of several companies in the same market \cite{abbruzzo2016}, and to reduce the maximum likelihood threshold of the model%; this makes it possible to compute the maximum likelihood estimate with relatively few data points 
~\cite{MRS21,uhler2011}. They have a rich algebraic structure, which changes with small modifications on the symmetries/colors in the graph \cite{coons2023symmetrically,davies2021coloured,uhler2011}.  Lastly, our BMT-derived models are a step toward a more accurate model for phylogenetic networks with symmetries for species hybridization, which we discuss in the last section of the paper.

A phylogenetic tree~$T$ is a rooted directed tree such that one of the leaves serves as its root and all edges are directed away from the root. We label the root with~$0$ and the non-root leaves with~$1,\ldots,n$. The set~$\Lv(T)$ of non-root leaves represents species of interest while the set of internal nodes~$\Int(T)$ represent their  common ancestors. We denote the least common ancestor of two non-root leaves~$i,j$ with~$\lca(i,j)$. A \emph{colored phylogenetic tree with zeroed nodes} is a phylogenetic tree with colored and potentially zeroed non-root nodes. 
We use~$\lambda_{\T}(i)$ to denote the color of node~$i$ in~$\T$,~$\Lambda_\T = \{ \lambda_\T (i) \mid i \in \Lv(\T) \cup \Int(\T)\}$ is the set of colors in~$\T$, and~$\rZ(\T) \subseteq \Int(\T)$ is the set of zeroed nodes in~$\T$. The only assumption we make on the coloring is that leaves and internal nodes do not share colors. Throughout this article, we will consistently use~$T$ for uncolored trees and~$\T = (T,\Lambda_{\T},\rZ(\T))$ for colored trees.

Let us describe the linear model associated to~$\T$. Assign to a color~$\lambda\in\Lambda_\T$,  the parameter~$t_{\lambda}$. Let $\sym_n(\R)$ denote the vector space of symmetric~$n\times n$ real-valued matrices and let~$\mathrm{PD}_n$ denote the space of~$n\times n$ real-valued positive definite matrices. The \emph{linear Gaussian model for~$\T$} is 
\[
\resizebox{0.99\linewidth}{!}{
$L_\T=\{M\in \sym_n(\R)\mid  M_{ij}=0 \text{ if } \lca(i,j)\in \rZ(\T)    \text{ and }    M_{ij}=t_{\lambda_\T(\lca(i,j))}  \text{ else} \}.$}
\]
The \emph{reciprocal (or inverse) space} of~$L_\T$ is~$L_\T^{-1} = \overline{\{\Sigma\in\sym_n(\R) \mid \Sigma^{-1}\in L_\T \}}$, where the overline denotes Zariski closure. 
 The \emph{vanishing ideal of the model} is the set of equations vanishing on~$L_\T^{-1}$.
% Following the definition of~$L_\T^{-1}$, the vanishing ideal of the model is the kernel of 
%  the rational map  \begin{align*} \label{eqn:inverse_map}
% \rho_\T: \C[Q] \to \C(t_1,\ldots,t_m ),\quad \ 
% \rho(q_{ij})= \frac{(-1)^{i+j}M_{[n]\setminus \{i\},[n]\setminus\{j\}}}{\det (M)}, 
% \end{align*}
% where~$M_{[n]\setminus \{i\},[n]\setminus\{j\}}$ is the~$ij$-th minor of the symmetric matrix~$M \in L_\T$.

The \emph{BMT model induced by tree~$\T$ with colored and zeroed nodes} is the set of multivariate Gaussian distributions with mean zero and set of covariance matrices~$L_\T\cap \mathrm{PD}_n$. Therefore,~$L_\T^{-1}\cap \mathrm{PD}_n$ is the set of concentration matrices for this model. 

Reversing the roles of covariance and concentration matrices,~$L_\T\cap \mathrm{PD}_n$ is the set of concentration matrices a class of {colored Gaussian graphical models} which we name BMT derived models. This is not straightforward. We recall the definition of a CGG model here and discuss the CGG models derived by~$L_\T$ in detail in \Cref{sec: combinatorics}.  A colored graph is a tuple~$\G = (G, \Lambda_\G)$ where~$G=(V,E)$ is a simple graph and~$\Lambda_\G$ is a set of colors used for edges and vertices of~$G$. The only restriction is that vertices and edges do not share colors. 
The CGG model for~$\G$ is the multivariate Gaussian model with mean zero and set of concentration matrices~$\L_\G\cap \mathrm{PD}_n$, where~$\L_{\G}$ is the linear space of symmetric matrices~$K = (k_{ij}) \in \sym_n(\R)$ satisfying:
\begin{enumerate}
    \item~$k_{ii}= k_{jj}$ if~$\lambda_\G(i) = \lambda_\G(j)$ for~$i,j\in V(\G)$,
    \item~$k_{ij} = k_{\ell m}$ if~$\lambda_\G(\{i,j\}) = \lambda_\G(\{\ell,m\})$ for~$\{i,j\},\{\ell,m\}\in E(\G)$, and
    \item~$k_{ij} = 0$ if~$\{i,j\} \not\in E(\G)$.
\end{enumerate}
Given~$\T$, consider the colored graph~$\G$ such that~$L_T=\L_\G$, as described in \Cref{prop:graph_from_colored_zeroed}. We call this the BMT derived model. An illustration is given in \Cref{fig:intro}.  

\setcounter{figure}{0}

\begin{figure}[h]
    \centering
    \begin{subfigure}[b]{0.3\textwidth}
        \centering
        \begin{tikzpicture}[scale=0.4]
            \tikzset{
                VertexStyle/.style={
                    shape=circle,
                    draw,
                    fill=white,
                    minimum size=10pt,
                    inner sep=0pt,
                    font=\tiny
                },
                EdgeStyle/.style={
                    color=black,
                    line width=0.5pt
                }
            }
            \Vertex[x=0, y=-0.5, label=0, color=white, style={opacity=0}]{n0}
            \Vertex[x=-1, y=-6, label=\textcolor{black}{2}, color=cyan]{n2}
            \Vertex[x=-3, y=-6, label=\textcolor{black}{1}, color=cyan]{n1}
            \Vertex[x=1, y=-6, label=3, color=yellow]{n3}
            \Vertex[x=3, y=-6, label=\textcolor{white}{4}, color=darkgreen]{n4}
            \Vertex[x=-2, y=-4.66, label=\textcolor{white}{5}, color=red]{n5}
            \Vertex[x=-1, y=-3.33, label=6, color=white]{n6}
            \Vertex[x=0, y=-2, label=\textcolor{white}{7}, color=blue]{n7}

            \Edge[color=black](n0)(n7)
            \Edge[color=black](n7)(n6)
            \Edge[color=black](n6)(n5)
            \Edge[color=black](n7)(n4)
            \Edge[color=black](n5)(n2)
            \Edge[color=black](n5)(n1)
            \Edge[color=black](n6)(n3)
        \end{tikzpicture}
        \caption{Phylogenetic tree~$\T$.}
    \end{subfigure}
    \begin{subfigure}[b]{0.3\textwidth}
        \centering
        \[
        \begin{bmatrix}
        \color{cyan}{t_c} & \color{red}{t_r} & 0 & \color{blue}{t_b} \\
        \color{red}{t_r} & \color{cyan}{t_c} &  0  &  \color{blue}{t_b}  \\
        0  &  0  & \color{yellow}{t_y} & \color{blue}{t_b} \\
        \color{blue}{t_b}  &  \color{blue}{t_b}  & \color{blue}{t_b}  & \color{darkgreen}{t_g}
        \end{bmatrix}
        \]
        \caption{Associated matrix in~$L_\T$.}
    \end{subfigure}
    \begin{subfigure}[b]{0.3\textwidth}
        \centering
        \begin{tikzpicture}[scale=1]
            \Vertex[label=$\textcolor{black}{1}$,color=cyan]{A};
            \Vertex[x=2,label=$\textcolor{black}{2}$,color=cyan]{B};
            \Vertex[y=-2,label=$\textcolor{white}{4}$,color=darkgreen]{D};
            \Vertex[x=2,y=-2, label=$3$,color=yellow]{C};
            \Edge[color=red](A)(B);
            \Edge[color=blue](A)(D);
            \Edge[color=blue](B)(D);
            \Edge[color=blue](C)(D);
        \end{tikzpicture}
        \caption{Derived colored graph~$\G$.}
    \end{subfigure}
    \caption{A colored and zeroed phylogenetic tree~$\T$ with zeroed internal node~$6$, its associated matrix in~$L_\T$, and colored graph~$\G$ derived from~$\T$. The reciprocal variety~$L_\T^{-1}=\cL^{-1}_\G$ is toric under the~$G$-derived Laplacian (\ref{eqn:derived_laplacian}), with toric vanishing ideal}
    \vspace{-0.5cm}
    \[
    \langle p_{14}-p_{24},p_{13}-p_{23},p_{01}-p_{02},p_{03}p_{24}-p_{02}p_{34},p_{04}p_{23}-p_{24}p_{34}\rangle.
    \]
    \vspace{-0.5cm}
    \label{fig:intro}
\end{figure}
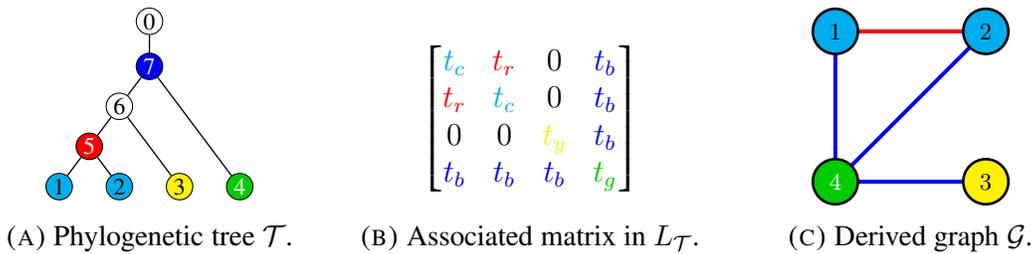
 Unlike the linear structure of~$L_\T$, the reciprocal variety~$L_\T^{-1}$ is generally defined by nonlinear conditions. This results in a complex geometry, which can drastically change with small changes on the coloring or zeroing in the tree. 
 Our goal is to identify combinatorial conditions on~$\T$, or on its associated BMT derived graph~$\G$, for which~$L_\T^{-1}=\L_\G^{-1}$ is a toric variety.  

Our work is motivated by recent advances on BMT models, that is,  when the tree has no zeroed nodes and each node has its own color.  Sturmfels, Uhler and Zwiernik \cite{sturmfels2019brownian} show that under the reduced graph Laplacian transformation, the reciprocal variety is the zero set of a toric ideal. The binomial equations and its monomial parametrization \cite{boege2021reciprocal} have been instrumental in deriving results on the maximum likelihood estimate \cite{coons2024maximum,truell2022maximumlikelihoodestimationbrownian}, and the dual maximum likelihood estimate~\cite{boege2021reciprocal} for BMT models. 
 
We informally state the main result of the paper in \Cref{thm: main}.  For ease of readability, we introduce the necessary notation. Let~$\T=(T,\Lambda_\T,Z(\T))$ be a tree with colored and zeroed nodes, and let~$\G=(G,\Lambda_\G)$ be its BMT-derived graph as in \Cref{prop:graph_from_colored_zeroed}. Let~$I_T$ be the toric vanishing ideal for~$L_T^{-1}$ under the reduced graph Laplacian transformation, stated in \Cref{thm:brownian_toric}. Let~$\I_G$ be the vanishing ideal for the Gaussian graphical model determined by the uncolored graph~$G$; that is, the vanishing ideal of~$\L_G^{-1}$. When~$G$ is a block graph,~$\I_G$ is known to be toric, with its binomials described in \Cref{thm:misra_sullivant}. Lastly, let~$\I_{\overline{\G}}$ be the ideal generated by linear forms~$\sigma_{ik} - \sigma_{jk}$ for any vertices~$i,j$ of~$\G$ that share the same color. Treat all the toric ideals~$I_T,\I_\G,\I_{\overline{G}}$ as over the same polynomial ring.

\begin{theorem}\label{thm: main} 
 Let~$\T=(T,\Lambda_\T,Z(\T))$  be a colored phylogenetic tree with zeroed nodes and let~$\G = (G, \Lambda_\G)$ be its BMT-derived graph. If~$\G$ is a vertex-regular block graph, then~$L_\T^{-1}=\L_\G^{-1}$ is a toric variety with \(\sqrt{I_T+ \I_G + \I_{\overline{\G}}}\) its vanishing ideal under the linear transformation given by the~$G$-derived Laplacian transformation (\Cref{sec:g-derived-laplacian}). Monomial parametrizations are provided in (\ref{eqn:generalized path map}) and (\ref{eqn:generalized path map block}).
\end{theorem}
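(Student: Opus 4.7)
My plan is to exploit the equality $L_\T = \L_\G$ coming from the BMT-derived graph construction, together with the three known toric/linear structures appearing in the theorem: the BMT tree ideal $I_T$ of Sturmfels-Uhler-Zwiernik, the block-graph Gaussian graphical model ideal $\I_G$ of Misra-Sullivant, and the linear binomials $\I_{\overline{\G}}$ forced by vertex-regularity. Since $L_\T^{-1}=\L_\G^{-1}$ and $M\mapsto M^{-1}$ is a birational isomorphism on symmetric matrices, the statement reduces to computing the vanishing ideal of $\L_\G^{-1}$ under the $G$-derived Laplacian change of coordinates.

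First, I would explicitly describe the $G$-derived Laplacian and the candidate monomial parametrization as products of edge parameters along paths in $\T$ together with path products within blocks of $G$. This parametrization should be natural in the sense that it specializes to the Sturmfels-Uhler-Zwiernik path map when all internal nodes have distinct colors and no nodes are zeroed, and specializes to the Misra-Sullivant block-path map when $\T$ contributes no color or zero constraints. Showing that these two specializations fit together coherently is the heart of the construction, and it will produce the maps referenced as the \emph{generalized path maps}.

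Next, I would verify the forward inclusion $\L_\G^{-1}\subseteq V(\sqrt{I_T+\I_G+\I_{\overline{\G}}})$ generator by generator. The binomials of $I_T$ vanish because they already vanish on the uncolored $L_T^{-1}\supseteq L_\T^{-1}$. The binomials of $\I_G$ vanish because $\L_\G^{-1}\subseteq \L_G^{-1}$ and $G$ is a block graph. The linear binomials of $\I_{\overline{\G}}$ vanish because two vertices of $\G$ sharing the same color have equal diagonal concentrations, and the $G$-derived Laplacian is designed so that this equality translates directly into equality of the corresponding $\sigma$-entries.

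For the reverse inclusion I would run an irreducibility and dimension argument. The variety $\L_\G^{-1}$ is irreducible as the Zariski closure of the image of an irreducible linear space under a rational map, and $V(I_T+\I_G+\I_{\overline{\G}})$ is irreducible from its monomial parametrization; it then suffices to match dimensions—both should equal $|\Lambda_\G|$, the number of independent color parameters—and to exhibit the monomial parametrization as dominant onto $\L_\G^{-1}$ after the change of coordinates. The main obstacle will be this final compatibility check: one must show that the path-monomial identifications forced by $I_T$ and by $\I_G$ are consistent with the linear identifications from $\I_{\overline{\G}}$, and that no extra relations appear. Vertex-regularity is used precisely here, since it guarantees that same-colored leaves contribute equivalent tree-path monomials and that same-colored internal vertices contribute equivalent block-path monomials, making both parametrizations compatible with the identifications on $\sigma$-entries. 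Passing to the radical at the end absorbs the possibility that the naive sum of ideals picks up nilpotents even though its variety is irreducible.
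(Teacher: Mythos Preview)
Your overall architecture---intersect three linear pieces, use that matrix inversion is birational, then match dimensions to identify the result with the kernel of a monomial map---is the same as the paper's. However, your forward-inclusion step hides the real difficulty and, as written, does not go through.

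You argue that the binomials of $I_T$ vanish on $\L_\G^{-1}$ ``because they already vanish on the uncolored $L_T^{-1}\supseteq L_\T^{-1}$'', and similarly for $\I_G$. But $I_T$ is the vanishing ideal of $L_T^{-1}$ \emph{under the reduced graph Laplacian} (the complete-graph-derived Laplacian), and $\I_G$ is stated in the original $\sigma$-coordinates; neither of these is the $G$-derived Laplacian when $G$ is not complete. The containments $L_\T\subseteq L_T$ and $\L_\G\subseteq \L_G$ only tell you that the vanishing ideals of $L_T^{-1}$ and $\L_G^{-1}$, \emph{each in its own coordinate system}, are contained in that of $\L_\G^{-1}$ in that same system. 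They do not tell you that the formally identical binomials, rewritten in the $q$-coordinates of the $G$-derived Laplacian, still cut out $L_T^{-1}$ and $\L_G^{-1}$. That is precisely the nontrivial content of the theorem: one must exhibit a \emph{single} linear change of variables under which all three vanishing ideals become binomial simultaneously. The paper does this by an explicit generator-by-generator computation of $\psi$ applied to $\sigma_{ik}\sigma_{j\ell}-\sigma_{i\ell}\sigma_{jk}$ and to the generators involving $p_{0j}$, and this computation relies on a structural fact you do not mention: every BMT-derived block graph is a \emph{star} graph with a unique central vertex $c$ (the unique leaf whose parent is the top internal node). The star structure is what makes the sign pattern of the $G$-derived Laplacian work and what allows the cross-terms involving $\sigma_{ci}$ to be absorbed into already-present generators.

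Your reverse inclusion via irreducibility plus a dimension count is essentially what the paper does (using that $L_\T$ is linear, hence $L_\T^{-1}$ is irreducible, together with the Eisenbud--Sturmfels fact that radicals of binomial ideals are binomial). But without first establishing that all three ideals are binomial under the \emph{same} transformation, you cannot invoke that step, and the kernel of your monomial map cannot be identified with the radical sum.
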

 
The proof is done in three parts. First, we show the case where no nodes are zeroed, so the graph~$\G$ is complete and vertex-regular. Second, we show the case where no color constraints are imposed on~$\T$ and~$\G$ is a block graph. Lastly, we combine the two steps. 
Each of these steps imply further linear conditions on the set of covariance matrices. That is,~$L_\T$ is an intersection of linear spaces. Since matrix inversion is a birational isomorphism, the variety~$L_\T^{-1}$ is the intersection of the reciprocal varieties of the aforementioned linear spaces. We show that this intersection is also a toric variety.\\

\textbf{Structure of the paper.} In \Cref{sec: binomial}
we discuss aspects of binomial and toric ideals relevant to our work and deduce their properties under birational isomorphisms. These properties are critical for proving binomiality results and can also serve as tests for toricness in similar instances, without the need to explicitly find a generating set. 
In \Cref{sec: prelim}, we outline preliminaries on toric ideals, Brownian motion tree models, and colored Gaussian graphical models. In \Cref{sec: combinatorics}, we relate properties of a colored phylogenetic tree to the combinatorial properties of its associated colored graphical model. \Cref{sec:4}, \Cref{sec:5}, and \Cref{sec:6} are dedicated to proving \Cref{thm: main}. We end with a discussion of applications to phylo-genetic networks for species hybridization. 

Macaulay2 \cite{M2} code for the examples shared in this paper, as well as supporting conjectures, is available on GitHub: \url{https://github.com/esc1734/colored-zeroed-trees}.

\section{Intersections of toric varieties and birational isomorphisms} \label{sec: binomial}  
Given an algebraic variety~$V$, its vanishing radical ideal~$I(V)$ in a Noetherian polynomial ring~$R$ is the set of polynomials~$f\in R$ such that~$f(v)=0$ for all~$v\in V$. If~$V$ is an irreducible variety, then~$I(V)$ is a prime ideal. 
An ideal is \emph{toric} if and only if it is prime and generated by binomials. Equivalently, an ideal is toric if and only if it is the kernel of a monomial rational map. 

In the next sections, we will see that our varieties of interest are finite intersections of toric varieties, and we will need to conclude that this intersection is again a toric variety. This is not true in general, since the intersection of irreducible varieties is often reducible. However, the radical of a binomial ideal is a binomial ideal, which implies the following.

\begin{lemma}
[{\cite[Theorem 3.1]{eisenbud1996binomialideals}}]   \label{thm:eisenbud_sturmfels_binomial} Let~$V_1,V_2\subset \P^n$ have binomial vanishing ideals $I(V_1)$ and~$I(V_2)$. Then,~$V_1\cap V_2$ has binomial vanishing ideal~$\sqrt{I(V_1)+I(V_2)}$.
\end{lemma}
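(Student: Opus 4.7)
The plan is to reduce the statement to two well-known facts: a Nullstellensatz-type identity relating the intersection of varieties to the radical of a sum of ideals, and a deep theorem on radicals of binomial ideals.

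First, I would apply the standard dictionary between varieties and radical ideals. Since $I(V_1)$ and $I(V_2)$ are the vanishing ideals of $V_1$ and $V_2$, respectively, the (projective) Nullstellensatz yields
\[
I(V_1\cap V_2)\;=\;\sqrt{I(V_1)+I(V_2)},
\]
so the entire content of the lemma is the claim that the right-hand side is binomial. This is the step to attack, and it splits naturally in two.

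Second, I would note the easy half: $I(V_1)+I(V_2)$ is itself binomial. Indeed, if $\{f_\alpha\}$ and $\{g_\beta\}$ are binomial generating sets of $I(V_1)$ and $I(V_2)$, then $\{f_\alpha\}\cup\{g_\beta\}$ generates $I(V_1)+I(V_2)$, and a sum of binomial ideals is generated by binomials.

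Third — and this is the main obstacle — I would need the fact that the radical of any binomial ideal in a polynomial ring over a field (here we may assume algebraically closed, or pass to one) is again a binomial ideal. This is precisely Theorem 3.1 of Eisenbud and Sturmfels, and it is genuinely nontrivial: the proof proceeds by establishing a \emph{cellular decomposition} of a binomial ideal (a decomposition into intersections of binomial ideals, each cellular with respect to a monomial localization), and then showing that the associated primes of a cellular binomial ideal are themselves binomial. Since the radical is the intersection of the minimal primes among these associated primes, it inherits a binomial generating set. I would invoke this result as a black box rather than reproving it.

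Putting the three steps together, $\sqrt{I(V_1)+I(V_2)}$ is binomial, and by the first step it equals $I(V_1\cap V_2)$, which completes the proof. The only subtlety to flag is the base field hypothesis implicit in using the Nullstellensatz; in our setting the varieties live in projective space over $\R$ (or $\C$ via Zariski closure), and the radical-of-binomial theorem applies over any field, so no additional care is required.
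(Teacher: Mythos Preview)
Your proposal is correct and matches the paper's approach: the paper does not give a standalone proof but simply notes that ``the radical of a binomial ideal is a binomial ideal'' (citing Eisenbud--Sturmfels) and records the lemma as an immediate consequence, which is exactly the three-step reduction you spell out.
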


We seek conditions for when this radical is a prime ideal, given that~$V_1$ and~$V_2$ are irreducible. To do so, we use birational isomorphisms.  Given two varieties~$V,W$, a rational map~$\rho: V \dashrightarrow W~$ is a \emph{birational isomorphism} if there exists some rational map~$\rho^{-1}$ such that \(\rho \circ \rho^{-1}\) and \(\rho^{-1} \circ \rho\) are defined and agree with the identity map on an open dense subset of each variety. {Of particular relevance is the matrix inversion map $\rho_M$, defined as}
\begin{equation} \label{eqn:general inverse_map}
    \begin{array}{cccc}
        \rho_M: & \C[\sigma_{ij}\mid 1\leq i\leq j\leq n] &\to & \C(m_{ij} \mid  1\leq i\leq j\leq n) \\
 & \sigma_{ij} &\mapsto  &\frac{(-1)^{i+j}M_{[n]\setminus \{i\},[n]\setminus\{j\}}}{\det (M)}.
    \end{array}
\end{equation}
{Observe that $\rho_M \circ \rho_M$ is defined and agrees with the identity over the set of invertible matrices, so $\rho_M$ is a birational isomorphism.} Our varieties of interest are the closures of the images of linear spaces of symmetric matrices under~$\rho_M$. The following shows that birational maps are useful for computing the intersection of algebraic varieties.

\begin{lemma}\label{lemma:birational}
 Let~$\rho:\P^n\to \P^m$ be a birational isomorphism and let~$V_1,V_2$ be algebraic varieties on~$\P^{n}$ such that the restriction of~$\rho$ to~$V_1, V_2, V_1 \cap V_2$ is still birational. Then,~$\rho(V_1\cap V_2) = \rho(V_1) \cap \rho(V_2)$. In particular, \[I({\rho(V_1\cap V_2)}) = \sqrt{I({\rho(V_1)}) + I({\rho(V_2)})}.\]
\end{lemma}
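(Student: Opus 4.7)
The plan is to first establish the set-theoretic equality $\rho(V_1 \cap V_2) = \rho(V_1) \cap \rho(V_2)$ as subvarieties of $\P^m$, and then derive the ideal identity as a direct consequence of Hilbert's Nullstellensatz. Recall that for a rational map $\rho$, we interpret $\rho(V)$ throughout as the Zariski closure of the image of $V$ intersected with the domain of definition of $\rho$.

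The forward inclusion $\rho(V_1 \cap V_2) \subseteq \rho(V_1) \cap \rho(V_2)$ is essentially formal. Since $V_1 \cap V_2 \subseteq V_i$ for $i = 1,2$, applying $\rho$ and taking Zariski closures gives $\rho(V_1 \cap V_2) \subseteq \rho(V_i)$, and intersecting yields the claim.

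For the reverse inclusion, I would use the birational inverse $\rho^{-1}$. Let $U \subseteq \P^n$ and $U' \subseteq \P^m$ be dense open subsets on which $\rho$ and $\rho^{-1}$ are mutually inverse isomorphisms. The hypothesis that $\rho|_{V_i}\colon V_i \dashrightarrow \rho(V_i)$ is birational yields, for each $i$, a dense open subset of $\rho(V_i)$ on which $\rho^{-1}$ restricts to a genuine inverse of $\rho|_{V_i}$, mapping points back into $V_i$. For a generic point $y$ lying in the intersection of these two "good" loci with $\rho(V_1) \cap \rho(V_2)$, the preimage $x := \rho^{-1}(y)$ must simultaneously lie in $V_1$ (since $y \in \rho(V_1)$) and in $V_2$ (since $y \in \rho(V_2)$), hence in $V_1 \cap V_2$, so $y = \rho(x) \in \rho(V_1 \cap V_2)$. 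The remaining third hypothesis that $\rho|_{V_1 \cap V_2}$ is itself birational ensures that the collection of such generic $y$ is dense in $\rho(V_1) \cap \rho(V_2)$; taking Zariski closure completes the inclusion.

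The "in particular" statement follows immediately: by the Nullstellensatz, $I(W_1 \cap W_2) = \sqrt{I(W_1) + I(W_2)}$ for any subvarieties $W_1, W_2 \subseteq \P^m$, and applying this with $W_i = \rho(V_i)$ together with the set-theoretic equality above gives $I(\rho(V_1 \cap V_2)) = \sqrt{I(\rho(V_1)) + I(\rho(V_2))}$. The main obstacle in the argument is the careful bookkeeping of open dense loci where $\rho$ and $\rho^{-1}$ restrict to honest inverses compatibly with each of $V_1$, $V_2$, and $V_1 \cap V_2$; precisely this is what the three birationality hypotheses are designed to control, since without them the indeterminacy locus of $\rho^{-1}$ could swallow all of $\rho(V_1) \cap \rho(V_2)$ and break the reverse inclusion.
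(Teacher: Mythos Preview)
Your proposal is correct and follows essentially the same approach as the paper's proof: both arguments exploit the open dense loci furnished by the birationality hypotheses, show that on these loci the intersection behaves as expected, and then pass to Zariski closures before invoking the Nullstellensatz. The only cosmetic difference is that the paper works on the source side (choosing dense opens $S_i \subseteq V_i$ and using injectivity of $\rho$ on $S_1 \cup S_2$ to get $\rho(S_1 \cap S_2) = \rho(S_1) \cap \rho(S_2)$), whereas you work on the target side (pulling back generic points of $\rho(V_1)\cap\rho(V_2)$ via $\rho^{-1}$); these are dual formulations of the same idea.
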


\begin{proof}
Since~$\rho$ is a birational isomorphism, we have open dense subsets~$S_1\subseteq V_1$, $S_2\subseteq V_2$ such that~$\rho^{-1}\circ \rho$ is the identity when restricted to~$S_1,S_2$. By definition,~$\rho(V_i) = \overline{\rho(S_i)}$. In particular,~$\rho$ is injective when restricted to~$S_1 \cup S_2$. Therefore,~$\rho(S_1 \cap S_2) = \rho(S_1) \cap \rho (S_2)$. Moreover,~$\rho(S_1), \rho(S_2)$ are open dense subsets of~$\rho(V_1),\rho(V_2)$, respectively. Then, by definition of subspace topology,~$\rho(S_i) \cap (\rho(V_1) \cap \rho(V_2))$ is an open dense subset in~$\rho(V_1) \cap \rho(V_2)$. Finally, since the intersection of open dense subsets is dense, we get
$\rho(V_1\cap V_2) = \overline{\rho(S_1\cap S_2)} = \overline{\rho(S_1)\cap \rho(S_2)} = \rho(V_1) \cap \rho(V_2)
$.
By Hilbert's Nullstellensatz, we have~$I\left(\rho(V_1)\cap\rho(V_2)\right)= \sqrt{I(\rho(V_1))+ I(\rho(V_2))}$.
\end{proof}

The above combines to give the following theorem, which will be applied later to the inverse map~(\ref{eqn:general inverse_map}) to prove our main results. 

\begin{theorem}\label{thm:ACTUAL_INTERSECTION}
Let~$\rho:\P^n\rightarrow \P^m$ be a birational isomorphism.   Let~$V_1, V_2\subseteq \P^n$ be irreducible algebraic varieties such that~$V_1 \cap V_2$ is also irreducible. Suppose that there is an invertible linear change of variables~$F$ such that both~$I(\rho(V_1))$ and~$I(\rho(V_2))$ are toric ideals under~$F$. Then,~$\rho(V_1 \cap V_2)$ is a toric variety with vanishing ideal~$\sqrt{I(\rho(V_1))+I(\rho(V_2))}$ which is toric under~$F$.    
\end{theorem}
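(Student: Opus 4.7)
The plan is to combine \Cref{lemma:birational} and \Cref{thm:eisenbud_sturmfels_binomial} with the observation that an invertible linear change of variables preserves both primeness and the binomial property of an ideal. The argument reduces the theorem to a short chain of formal manipulations once the hypotheses of \Cref{lemma:birational} are verified.

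First I would apply \Cref{lemma:birational} to obtain the set-theoretic equality $\rho(V_1 \cap V_2) = \rho(V_1) \cap \rho(V_2)$ together with the ideal-theoretic identity
\[
I(\rho(V_1 \cap V_2)) = \sqrt{I(\rho(V_1)) + I(\rho(V_2))}.
\]
This requires that the restrictions of $\rho$ to $V_1$, $V_2$, and $V_1 \cap V_2$ remain birational; in the applications of interest the $V_i$ are linear subspaces of symmetric matrices that meet the nonzero-determinant locus, so this is automatic. Next I would transport everything through the linear change of variables $F$. Since $F$ is invertible and linear, it induces a graded automorphism of the ambient coordinate ring, so it commutes with sums and radicals of ideals and preserves both primeness and the property of being generated by binomials. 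Applying $F$ to the identity above yields
\[
F(I(\rho(V_1 \cap V_2))) = \sqrt{F(I(\rho(V_1))) + F(I(\rho(V_2)))}.
\]
By hypothesis each summand on the right is toric, hence binomial, and by \Cref{thm:eisenbud_sturmfels_binomial} the radical of the sum of two binomial ideals is again binomial. Meanwhile, because $V_1 \cap V_2$ is irreducible and $\rho$ restricts birationally to it, its image $\rho(V_1 \cap V_2)$ is irreducible, so the left-hand side is prime, and primeness is preserved by~$F$. Combining, $F(I(\rho(V_1 \cap V_2)))$ is both prime and binomial, hence toric, which is precisely the claim.

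The main obstacle is really just the bookkeeping point of verifying that $\rho$ remains birational when restricted to each of $V_1$, $V_2$, and $V_1 \cap V_2$; this is what allows the irreducibility of $V_1\cap V_2$ to transfer to its image and what lets \Cref{lemma:birational} apply cleanly. Beyond this verification, the substantive mathematical content of the theorem is contained entirely in \Cref{thm:eisenbud_sturmfels_binomial} and in the birational inversion identifying $\rho(V_1\cap V_2)$ with $\rho(V_1)\cap\rho(V_2)$; the role of $F$ is just to translate everything into the coordinate system in which the toric structure is visible.
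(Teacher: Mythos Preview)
Your proposal is correct and follows essentially the same route as the paper: invoke \Cref{lemma:birational} to identify the vanishing ideal of $\rho(V_1\cap V_2)$ with $\sqrt{I(\rho(V_1))+I(\rho(V_2))}$, use irreducibility of $V_1\cap V_2$ to get primeness, and use \Cref{thm:eisenbud_sturmfels_binomial} together with the fact that $F$ preserves binomiality, primeness, sums, and radicals to conclude the ideal is toric in the $F$-coordinates. You are, if anything, slightly more careful than the paper in flagging that the hypothesis of \Cref{lemma:birational} (birationality of the restrictions) must actually hold; the paper's proof silently assumes this.
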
 

\begin{proof}
 By \Cref{lemma:birational},~$\sqrt{I(\rho(V_1))+I(\rho(V_2))}$ is the vanishing ideal for~$\rho(V_1 \cap V_2)$. Since~$V_1 \cap V_2$ is irreducible,~$\rho(V_1 \cap V_2)$ is also irreducible. Hence, the ideal $\sqrt{I(\rho(V_1))+I(\rho(V_2))}$ is  prime. Applying an invertible linear transformation preserves irreducibility. Hence,~$\sqrt{I(\rho(V_1))+I(\rho(V_2))}$ under~$F$ is a prime ideal.

Since the ideals~$I(\rho(V_1)), I(\rho(V_2))$ are binomial ideals under~$F$, their sum is also binomial under~$F$. The radical of their sum is binomial under~$F$, by \Cref{thm:eisenbud_sturmfels_binomial}. We conclude that~$\rho(V_1\cap V_2)$ is a toric variety with vanishing ideal~$\sqrt{I(\rho(V_1))+I(\rho(V_2))}$. 
\end{proof}

\begin{remark}
    The results in this section involve projective varieties, but they also apply to the corresponding affine cones. In particular, this applies to the affine varieties we study in this paper, such as~$L_\T^{-1}$ and~$\L_\G^{-1}$.
\end{remark}

\section{Introducing BMT-derived models} \label{sec: prelim}

Here we review the main results in the literature on toric geometry for BMT models and CGG models. Then we define CGG models derived from a colored and zeroed tree, which we refer to as BMT-derived models. In \Cref{prop:graph_from_colored_zeroed}, we show how all BMT-derived models can be realized by operations of color merging and deletion of vertices and edges of certain colored complete graphs. We end this section by introducing the generalized path map on a tree with colored and zeroed nodes. This monomial map describes the reciprocal variety when it is toric. 

\subsection{Literature on Brownian motion tree (BMT) models} BMT models are multivariate Gaussian models derived from a phylogenetic tree~$T$ when no node is zeroed and no additional symmetries are added, i.e., each node has its own color. Given a tree~$T$ with~$n$ non-root leaves, its space of covariance matrices is~$L_T\cap \mathrm{PD}_n$.

Consider the linear change of variables on~$\C[p_{ij} \mid 0\leq i < j\leq n]$ given by
\begin{align}
\label{eqn:graph_laplacian_BMT}
\begin{aligned}
    p_{ij} &= -\sigma_{ij} \quad &\textrm{for }&1\leq i< j\leq n,\\
    p_{0i} &= \sum_{j=1}^n \sigma_{ij} \quad &\textrm{for }&1\leq i\leq n.
\end{aligned} 
% \makebox[5pt][l]{ \qquad (reduced graph Laplacian)}
\end{align}
{We call this the \emph{reduced graph Laplacian transformation}. The matrix of the transformation is the {reduced graph Laplacian}} for the complete graph on~$n+1$ vertices with edge weights~$p_{ij}$, and with the first column and row removed. 

\begin{theorem}[{\cite[Theorem 1.2]{sturmfels2019brownian}}] \label{thm:brownian_toric}
 Let~$T$ be a phylogenetic tree. Consider the toric ideal~$I_T$ generated by the binomials~$p_{ik}p_{j\ell} - p_{i\ell} p_{jk}$ for cherries~$\{i,j\}, \{k,\ell\}$ in the induced 4-leaf subtree on any quadruple of leaves~$i,j,k,\ell \in \{0\}\cup \Lv(\T)$.  Then,~$I_T$ is the vanishing ideal of~$L_T^{-1}$ under the reduced graph Laplacian transformation (\ref{eqn:graph_laplacian_BMT}).
\end{theorem}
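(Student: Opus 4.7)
The plan is to exhibit a monomial parametrization of $L_T^{-1}$ under the reduced graph Laplacian (\ref{eqn:graph_laplacian_BMT}) and then identify the stated quadrics as generators of the toric ideal of that parametrization. Schematically: parametrize, verify that the quadrics vanish, and finally show that they generate, combining an induction on the size of $T$ with a dimension count.

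First I would build the parametrization. Introduce a parameter $\theta_v$ for every node $v$ of $T$ and set
\[
p_{ij} = \prod_{v \in \mathrm{path}_T(i,j)} \theta_v
\]
for each distinct pair $i,j \in \{0\}\cup \Lv(T)$, where $\mathrm{path}_T(i,j)$ collects the nodes on the unique $T$-path from $i$ to $j$ (treating $0$ as a leaf attached to the root of the unrooted shape). The justification is a cherry-pruning / Schur-complement computation: starting from $\Sigma \in L_T$, I would invert recursively by contracting one cherry at a time, tracking how the entries of $\Sigma^{-1}$ and its row/column sums depend on the node parameters $t_v$. Applying (\ref{eqn:graph_laplacian_BMT}) then clears denominators in exactly the right way, and the remaining numerators factor as path monomials in suitably rescaled $\theta_v$.

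Second, I would verify that each binomial $p_{ik}p_{j\ell} - p_{i\ell}p_{jk}$ for cherries $\{i,j\}, \{k,\ell\}$ of the induced $4$-leaf subtree vanishes on this parametrization. This reduces to the multiset identity
\[
\mathrm{path}_T(i,k) \sqcup \mathrm{path}_T(j,\ell) = \mathrm{path}_T(i,\ell) \sqcup \mathrm{path}_T(j,k),
\]
which is immediate after splitting each path at the appropriate $\lca$: in the induced $4$-leaf subtree the two cherries are joined by a single interior edge, and both sides traverse the endpoints of that edge twice and each cherry-$\lca$ once, so the exponent vectors match.

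The hard part is showing that these quadrics generate all of $I_T$ rather than a strict subideal. My plan is to induct on $|\Lv(T)|$: pick a cherry $\{i,j\}$ of $T$, let $T'$ be the tree obtained by pruning $i$, and apply the induction hypothesis to $I_{T'}$. Relations in $I_T$ involving index $i$ are then controlled by the cherry $4$-leaf binomials centered on $\{i,j\}$, which allow swapping of $i$ and $j$ in the presence of any two other leaves; these, together with the inductive generators for $T'$, should suffice. To close the argument, one (i) matches the Krull dimension of the variety cut out by the candidate binomial ideal with $\dim L_T^{-1} = |V(T)|$ to force irreducibility, and (ii) shows, in a Gr\"obner-style argument, that every toric syzygy among the path monomials reduces to a combination of $4$-leaf ones. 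Step (ii) is the crux of the proof and the point at which tree combinatorics — in particular the four-point condition characterizing tree metrics — enters essentially.
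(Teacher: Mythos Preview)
This theorem is not proved in the paper: it is quoted verbatim from \cite[Theorem~1.2]{sturmfels2019brownian}, and the path-map parametrization you construct in your step~1 is likewise quoted, as Theorem~\ref{thm:brownian_monomial map}, from \cite[Proposition~3.1]{boege2021reciprocal}. There is therefore no in-paper argument against which to compare your proposal.

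That said, a brief comment on the proposal itself. Your outline is in the right spirit --- the cited references do proceed through the path parametrization --- but the weight is misplaced. Step~1 is the substantive part: establishing that $L_T^{-1}$, after the reduced Laplacian change of coordinates, is \emph{exactly} the image of the path map is a genuine computation (in the original paper it goes through an explicit formula for the inverse in terms of edge parameters, not a loose ``cherry-pruning / Schur-complement'' sketch), and you have only gestured at it. Step~2 is fine. Step~3, as written, is not yet a proof: you invoke an induction on $|\Lv(T)|$ and a dimension count, but you have not explained why pruning a leaf and appealing to $I_{T'}$ captures all relations involving the pruned index, nor why matching Krull dimensions forces the candidate binomial ideal to be prime rather than merely equidimensional. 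Once step~1 is in hand, the cleaner route --- and the one taken in the literature --- is to observe that $\ker\varphi_T$ is automatically prime and then identify the quadrics as a known generating set for that specific lattice ideal (e.g.\ via toric fiber products or a Gr\"obner basis in a carefully chosen term order), rather than an ad hoc induction.
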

By cherries, we mean a pair of leaves adjacent to the same internal node.
The authors in \cite{boege2021reciprocal} connect this ideal to the paths in the tree. Let~$E(\T)$ be the set of edges of~$\T$, and let~$i \leftrightsquigarrow j$ denote the set of edges in the path that connects leaf~$i$ to leaf~$j$. Direct each edge in~$T$ to point away from the root. Associate to each directed edge~$(\ell,k)$ a parameter~$\theta_k$; see \Cref{fig:uncolored}.

\begin{figure}[h]
    \centering
    \begin{subfigure}[b]{0.3\textwidth}
        \centering
        \begin{tikzpicture}[scale=0.4]
            \tikzset{
                VertexStyle/.style={
                    shape=circle,
                    draw,
                    fill=white,
                    minimum size=10pt,
                    inner sep=0pt,
                    font=\tiny
                },
                EdgeStyle/.style={
                    color=black,
                    line width=0.5pt
                }
            }
            \Vertex[x=0, y=6, label=0, color=gray!0, style={opacity=0}]{n0}
            \Vertex[x=-1, y=0, label=\textcolor{black}{2}, color=yellow]{n2}
            \Vertex[x=-3, y=0, label=1, color=cyan]{n1}
            \Vertex[x=1, y=0, label=3, color=magenta]{n3}
            \Vertex[x=3, y=0, label=4, color=violet!50]{n4}
            \Vertex[x=-2, y=2, label=\textcolor{white}{5}, color=red]{n5}
            \Vertex[x=2, y=2, label=\textcolor{white}{6}, color=darkgreen]{n6}
            \Vertex[x=0, y=4, label=\textcolor{white}{7}, color=blue]{n7}

            \draw (n0) -- (n7) [black] node [pos=0.5, right, black] {\tiny~$\theta_7$};
            \draw (n7) -- (n6) [black] node [pos=0.3, right, black] {\tiny~$\theta_6$};
            \draw (n6) -- (n4) [black] node [pos=0.3, right, black] {\tiny~$\theta_4$};
            \draw (n7) -- (n5) [black] node [pos=0.3, left, black] {\tiny~$\theta_5$};
            \draw (n5) -- (n2) [black] node [pos=0.3, right, black] {\tiny~$\theta_2$};
            \draw (n5) -- (n1) [black] node [pos=0.3, left, black] {\tiny~$\theta_1$};
            \draw (n6) -- (n3) [black] node [pos=0.3, left, black] {\tiny~$\theta_3$};
        \end{tikzpicture}
        \caption*{}
    \end{subfigure}
    \begin{subfigure}[b]{0.3\textwidth}
        \centering
        \[
         \begin{bmatrix}
                {\color{cyan}{t_1}} & {\color{red}{t_5}} & {\color{blue}{t_7}} & {\color{blue}{t_7}}\\
               {\color{red}{t_5}} & {\color{yellow}{t_2} }&{\color{blue}{t_7}}& {\color{blue}{t_7}}\\
               { \color{blue}{t_7} }& {\color{blue}{t_7}}& {\color{magenta}{t_3}} & {\color{darkgreen}{t_6}}\\
               {\color{blue}{t_7}} &{\color{blue}{t_7}} &  {\color{darkgreen}{t_6}} & {\color{violet!50}{t_4}}
            \end{bmatrix}
        \]
        \caption*{}
    \end{subfigure}
    \begin{subfigure}[b]{0.3\textwidth}
        \centering
        \begin{tikzpicture}[scale=1]
            \Vertex[label=$1$,color=cyan]{A};
            \Vertex[x=2,label=\textcolor{black}{$2$},color=yellow]{B};
            \Vertex[y=-2,label=$4$,color=violet!50]{D};
            \Vertex[x=2,y=-2, label=$3$,color=magenta]{C};
            \Edge[color=red](A)(B);
            \Edge[color=blue](A)(D);
            \Edge[color=blue](B)(D);
            \Edge[color=darkgreen](C)(D);
            \Edge[color=blue](A)(C);
            \Edge[color=blue](B)(C);
        \end{tikzpicture}
        \caption*{}
    \end{subfigure}
    \vspace{-0.7cm}
    \caption{A binary tree on 4 non-root leaves and its corresponding colored graph. The tree can be thought of as an uncolored tree, since all nodes have different colors.}
    \label{fig:uncolored}
\end{figure}
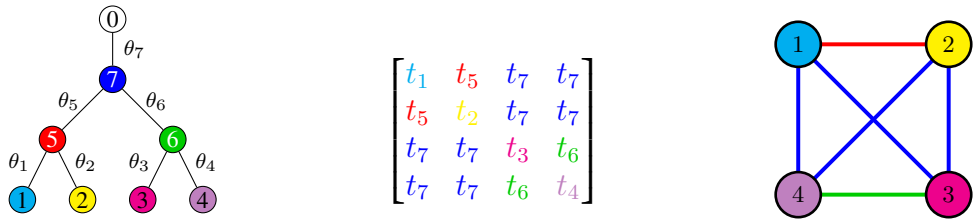
 
\begin{theorem}[{\cite[Proposition 3.1]{boege2021reciprocal}}] \label{thm:brownian_monomial map}
Let~$T$ be a phylogenetic tree. Then, the ideal~$I_T$ from \Cref{thm:brownian_toric} is the kernel of the path map 
 \begin{align} \label{eqn:shortest path map}
\varphi_T: \C[p_{ij} \mid 0\leq i<j\leq n] \to \C[\theta_k\mid k\in V(\T)], \ 
p_{ij} \mapsto \prod_{(\ell, k)\in i \leftrightsquigarrow j} \theta_k. 
\end{align} 
\end{theorem}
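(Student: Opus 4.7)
The plan is to prove the equality $I_T = \ker \varphi_T$ by establishing both inclusions, invoking Theorem~\ref{thm:brownian_toric} for the primality of $I_T$.

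For the inclusion $I_T \subseteq \ker \varphi_T$, I would verify that each cherry binomial generator $p_{ik}p_{j\ell} - p_{i\ell}p_{jk}$ maps to zero. Set $s = \lca_T(i,j)$, $t = \lca_T(k,\ell)$, and $r = \lca_T(s,t)$. The cherry structure of the induced $4$-leaf subtree forces $\lca_T(a,b) = r$ whenever $a \in \{i,j\}$ and $b \in \{k,\ell\}$, so each path $a \leftrightsquigarrow b$ in $T$ decomposes as the disjoint union $E(a,s) \sqcup E(s,r) \sqcup E(r,t) \sqcup E(t,b)$ of edge sets. Taking multiset unions, both $E(i,k) \cup E(j,\ell)$ and $E(i,\ell) \cup E(j,k)$ equal $E(i,s) \sqcup E(j,s) \sqcup 2E(s,r) \sqcup 2E(r,t) \sqcup E(t,k) \sqcup E(t,\ell)$. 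Since $\theta_k$ in $\varphi_T$ corresponds to the lower endpoint $k$ of each edge, the two monomials $\varphi_T(p_{ik}p_{j\ell})$ and $\varphi_T(p_{i\ell}p_{jk})$ coincide.

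For the reverse inclusion, I would argue that both ideals are prime with the same Krull dimension. The ideal $\ker \varphi_T$ is prime as the kernel of a ring map into the integral domain $\C[\theta_k]$, and $I_T$ is prime by Theorem~\ref{thm:brownian_toric}, being the vanishing ideal of the irreducible variety $L_T^{-1}$. Matrix inversion is birational, so $\dim V(I_T) = \dim L_T^{-1} = \dim L_T = |V(T)|-1$, since each non-root node contributes an independent parameter to $L_T$. On the other hand, $\dim V(\ker \varphi_T)$ equals the rank of the lattice $\Lambda$ spanned in $\Z^{V(T) \setminus \{0\}}$ by the exponent vectors of the monomials $\varphi_T(p_{ij})$. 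To show $\mathrm{rank}(\Lambda) = |V(T)|-1$, I would exhibit an explicit full-rank family: the vectors for $\{\varphi_T(p_{0i}) : i \in \Lv(T)\}$ carve out the root-to-leaf ancestor chains and already isolate each leaf parameter $\theta_i$ via sibling ratios together with $\varphi_T(p_{ij})$; and for each internal node $v$, taking leaves $i,j$ in distinct subtrees below $v$, the combination $\varphi_T(p_{0i})\varphi_T(p_{0j})\varphi_T(p_{ij})^{-1}$ has exponent vector supported on ancestors of $v$, which (working downward) isolates $\theta_v$. Two prime ideals of the same dimension with one contained in the other must coincide.

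The main obstacle is formalizing this rank computation uniformly across tree topologies; while the node-by-node isolation of $\theta_v$ is transparent, a clean inductive argument on the number of leaves (peeling off a cherry) is needed to avoid case analysis. A cleaner alternative avoiding the explicit dimension count would be to identify the image of $\varphi_T$ with $L_T^{-1}$ directly via the Brownian motion parametrization of $L_T$: writing $\sigma_{ij} = \sum_{k \in \mathrm{anc}_T(i,j)} w_k$ for edge variances $w_k$, an explicit tree-based inversion formula combined with the reduced graph Laplacian change of variables yields $p_{ij} = \prod_{k \in i \leftrightsquigarrow j} \theta_k$ after a suitable rescaling $\theta_k = \theta_k(w)$. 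This realizes $L_T^{-1}$ under the reduced Laplacian as the Zariski closure of $\mathrm{image}(\varphi_T)$ in one stroke, giving $\ker \varphi_T = I(L_T^{-1}) = I_T$ without a separate dimension argument.
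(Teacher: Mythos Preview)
The paper does not give its own proof of this statement: Theorem~\ref{thm:brownian_monomial map} is quoted from \cite[Proposition 3.1]{boege2021reciprocal} and used as a black box, so there is no in-paper argument to compare against.

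That said, your proposal is sound and, interestingly, mirrors exactly the template the present paper uses to establish its own analogous results (Theorems~\ref{thm:vertex_regular} and~\ref{thm:deletions}): verify that the known binomial generators vanish under the monomial map, then conclude equality from primality plus a dimension match. Your cherry-binomial verification is correct, including implicitly for quadruples containing the root leaf $0$, since the path decomposition through $s,r,t$ works uniformly. The dimension count is the only soft spot: the node-by-node isolation of each $\theta_v$ is morally right but, as you note, would benefit from a clean induction (peel a cherry, reduce to a smaller tree). Your proposed alternative of realizing $L_T^{-1}$ directly as the image closure of $\varphi_T$ via an explicit inversion formula for the Brownian covariance matrix is in fact closer to how \cite{boege2021reciprocal} actually proceeds, and would indeed bypass the rank computation entirely.
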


\begin{example}
Let~$T$ be the tree from \Cref{fig:uncolored}.
Using Macaulay2, we find that the vanishing ideal under the reduced graph Laplacian transformation is
\[
 \resizebox{0.99\linewidth}{!}{
$I_T= (p_{14}p_{23}-p_{13}p_{24},p_{04}p_{23}-p_{03}p_{24},p_{02}p_{14}-p_{01}p_{24},p_{04}p_{13}-p_{03}p_{14},p_{02}p_{13}-p_{01}p_{23}).$}
\]
This is the kernel of $\varphi_T : \R[p_{ij}\mid 0\leq i<j\leq 4]\rightarrow \R[\theta_1,\theta_2,\theta_3,\theta_4,\theta_5,\theta_6,\theta_7]$ given by~$\varphi_T(p_{01})=\theta_1\theta_5\theta_7, \varphi_T(p_{02})=\theta_2\theta_5\theta_7, \varphi_T(p_{03})=\theta_3\theta_6\theta_7, \ldots, \varphi_T(p_{24})=\theta_2\theta_5\theta_6\theta_4, \varphi_T(p_{34})=\theta_3\theta_4$.
\end{example} 

\subsection{Literature on Gaussian graphical models}

The main results on the geometry of Gaussian graphical models assume $G$ is a block graph. A graph $G$ is a \textit{$c$-clique sum} of smaller graphs~$G_1,G_2$ if there exists a partition~$(A,B,C)$ of~$V(G)$ such that~$C$ is a clique with~$|C| = c$,~$C$ separates~$A$ and~$B$, and~$G_1,G_2$ are subgraphs induced by~$A\cup C$ and~$B\cup C$, respectively. A graph is a \emph{block graph} if it can be expressed recursively as the 1-clique sum of complete graphs. Block graphs can be equivalently defined using a distance metric. Given two vertices~$u,v$ of a graph, the \emph{distance}~$d(u,v)$ is the length of the shortest path between the vertices. A graph is a {block graph} if and only if it satisfies the \emph{four-point condition}: for any four vertices~$u,v,w,x$, the larger two of the following are equal:
\begin{align}\label{eqn:four point condition}
    d(u,v) + d(w,x),\quad d(u,w) + d(v,x),\quad d(u,x) + d(v,w).
\end{align}

\begin{theorem}[{\cite[Theorem 5]{misra2019gaussian}}]\label{thm:misra_sullivant}
    Let~$G$ be a block graph. Then, its vanishing ideal~$\I_G$ is generated by the~$2\times 2$ minors of~$\Sigma_{A\cup C,B\cup C}$, the submatrix of $\Sigma$ with rows indexed by $A\cup C$ and columns indexed by $B\cup C$), for all possible 1-clique partitions~$(A,B,C)$ of~$G$.
\end{theorem}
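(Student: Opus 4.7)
The plan is to proceed by induction on the number of blocks of $G$. The base case $G = K_n$ gives $\L_G = \sym_n(\R)$, so $\I_G = (0)$, and the right-hand side is also trivial since there are no nontrivial $1$-clique partitions $(A,B,C)$ with $A,B$ nonempty.

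For the inductive step, the block tree of $G$ has a leaf: pick a pendant block $K$ with its unique cut vertex $c$, set $A = V(K)\setminus\{c\}$ and $B = V(G)\setminus V(K)$, and write $G_1 = G[A\cup\{c\}] = K$ and $G_2 = G[B\cup\{c\}]$, both block graphs with strictly fewer blocks. The containment direction is the easier half: the $2\times 2$ minors of $\Sigma_{A\cup\{c\},B\cup\{c\}}$ lie in $\I_G$ because the undirected Markov property gives $X_A \perp X_B \mid X_c$, which for Gaussians is equivalent to the Schur identity $\Sigma_{A,B} = \Sigma_{A,c}\,\sigma_{cc}^{-1}\,\Sigma_{c,B}$, and a direct check shows this is the same as $\mathrm{rank}(\Sigma_{A\cup\{c\},B\cup\{c\}}) \leq 1$; alternatively one invokes the trek separation theorem with separator $(\{c\},\{c\})$. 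Minors from $1$-clique partitions entirely contained in $G_2$ lie in $\I_{G_2} \subseteq \I_G$ by induction, and any remaining partition straddling the cut is reducible to these two cases by the four-point condition \eqref{eqn:four point condition}, which rigidly constrains how cut vertices of $G$ can decompose $V(G)$.

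For the reverse containment, I would exploit the decomposability of the model: $\L_G^{-1}$ is birational to the fiber product $\L_{G_1}^{-1} \times_{\sigma_{cc}} \L_{G_2}^{-1}$, with $\Sigma_{A,B}$ recovered rationally from $\Sigma_{A,c}$, $\sigma_{cc}$, and $\Sigma_{c,B}$ via the Schur identity above. Given $f \in \I_G$, I would use the cut minors at $c$ to eliminate each $\sigma_{ab}$ with $a\in A$, $b\in B$ from $f$, after clearing appropriate powers of $\sigma_{cc}$; this yields a polynomial in the variables indexed separately by $G_1$ and $G_2$ that must lie in $\I_{G_1}+\I_{G_2}$, which by induction is generated by the required minors.

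The main obstacle is turning this elimination into a clean primality argument for the candidate ideal $J = \I_{G_1} + \I_{G_2} + (\text{cut minors at } c)$. The cleanest route is to compute the Krull dimension of the quotient ring and match it with $\dim \L_G^{-1}$ using the fiber-product description, and then to argue that $\sigma_{cc}$ is a nonzerodivisor modulo $J$ so that the elimination above is valid. This dimension/primality bookkeeping is where the block structure is essential: the four-point condition ensures that minors from different cut vertices are mutually compatible across the block tree, rather than introducing redundant or spurious components that would inflate the variety beyond $\L_G^{-1}$.
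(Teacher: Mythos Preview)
This theorem is not proved in the paper: it is quoted as \cite[Theorem 5]{misra2019gaussian} and used as a black box, so there is no ``paper's own proof'' to compare your proposal against. Your task is therefore to reconstruct a proof of a result from the literature rather than to match anything in the present article.

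As a plan, your outline is the right shape --- induction over the block tree, with the forward containment coming from conditional independence (equivalently trek separation), and the reverse containment handled by a fiber-product decomposition at a pendant cut vertex. But you have correctly identified, and not closed, the real gap: you need that your candidate ideal $J=\I_{G_1}+\I_{G_2}+(\text{cut minors at }c)$ is prime (so that the inclusion $J\subseteq\I_G$ becomes an equality once you match dimensions), and that $\sigma_{cc}$ is a nonzerodivisor modulo $J$ (so that the elimination is legitimate). Neither of these follows formally from what you wrote; in particular, a sum of prime ideals need not be prime, and the dimension count alone does not certify primality. In Misra--Sullivant's original argument this is handled via the machinery of toric fiber products / prime gluing for determinantal ideals, which gives primality and a Gr\"obner basis simultaneously. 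If you want to avoid citing that machinery, you would need to exhibit an explicit monomial parametrization of the zero set of $J$ (so $J$ is the kernel of a ring map to a domain), or otherwise argue primality directly --- the four-point condition remark in your last paragraph is suggestive but does not by itself supply either of these.
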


Adding colors and removing edges in a graph may increase or decrease the complexity of the model. For instance, any nontrivial coloring of a clique gives a nonzero ideal. However, when a block graph has an RCOP coloring, its model is still toric \cite{coons2023symmetrically}. In this case, it is precisely the uncolored model cut out by a linear space determined by symmetries in the coloring of the graph and it satisfies the conditions of \Cref{thm:ACTUAL_INTERSECTION} with~$F$ being the identity transformation.

\subsection{BMT-derived models}
\label{sec: combinatorics}

As shown in \Cref{fig:intro} and noted in \cite{sturmfels2020estimating}, we can associate a colored Gaussian graphical model to every BMT model such that the concentration matrices of the former coincide with the covariance matrices of the latter. As a consequence, CGG models derived from phylogenetic trees have toric vanishing ideals under the reduced Laplacian transformation. 
This observation is important to us, so we formalize it in the general setting of colored and zeroed trees.

Let~$\T=(T,\Lambda_\T,Z(\T))$ be a tree with colored and zeroed nodes on~$n$ non-root leaves.  The \emph{BMT-derived graph from~$\T$} is the  colored graph~$\G=(G,\Lambda_\G)$ on vertices~$\{1,\ldots, n\}$ such that
\begin{enumerate}
    \item vertices~$i,j$ have~$\lambda_\G(i)=\lambda_\G(j)$ in~$\G$  if and only if leaves~$i,j$  have $\lambda_\T(i)=\lambda_\T(j)$ in~$\T$, 
    \item~$\{i,j\}$ is an edge in~$\G$ if and only if~$\lca(i,j)\notin \rZ(\T)$,
    \item  edges~$\{i,j\}$,~$\{k,l\}$ in~$\G$ have~$\lambda_\G(\{i,j\})=\lambda_\G(\{k,l\})$ if and only if $\lca_\T(i,j)=\lca_\T(k,l)$.
\end{enumerate} 

\begin{proposition}
Let~$\T$ be a tree with colored and zeroed nodes, and let~$\G$ be its BMT-derived graph. Then,~$L_\T=\mathscr{L}_\G$. 
\end{proposition}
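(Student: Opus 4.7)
The plan is to verify directly from the definitions that the linear constraints cutting out $L_\T$ and $\mathscr{L}_\G$ inside $\sym_n(\R)$ are identical. I would proceed entry by entry, handling diagonal entries, zeroed off-diagonals, and non-zeroed off-diagonals, and in each case match the relevant clause in the definition of $L_\T$ with the corresponding clause in the definition of $\mathscr{L}_\G$ via the defining properties (1)--(3) of the BMT-derived graph. Since both sides are described as linear subspaces carved out by entry equalities and vanishing conditions, matching these conditions pairwise suffices.

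For the diagonal, $\lca(i,i)=i$ is always a leaf and therefore never zeroed, so any $M\in L_\T$ satisfies $M_{ii}=t_{\lambda_\T(i)}$. The equality $M_{ii}=M_{jj}$ is thus equivalent to $\lambda_\T(i)=\lambda_\T(j)$, which by property~(1) of $\G$ is equivalent to $\lambda_\G(i)=\lambda_\G(j)$; this is precisely condition~(1) of $\mathscr{L}_\G$. For off-diagonal zero conditions, property~(2) of $\G$ gives $\lca(i,j)\in Z(\T)$ if and only if $\{i,j\}\notin E(\G)$, which translates the zero constraints in $L_\T$ into condition~(3) of $\mathscr{L}_\G$. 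For off-diagonal entries with unzeroed lca, $M_{ij}=t_{\lambda_\T(\lca(i,j))}$, so $M_{ij}=M_{kl}$ holds iff the lcas agree as colored internal nodes of $\T$, which by property~(3) of $\G$ corresponds to $\lambda_\G(\{i,j\})=\lambda_\G(\{k,l\})$, matching condition~(2) of $\mathscr{L}_\G$.

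No step here is technically hard; the only point requiring care is the bookkeeping of the color palette. Under the construction of $\G$, leaf colors in $\T$ are identified with vertex colors in $\G$, and internal-node colors in $\T$ are identified with edge colors in $\G$. This identification is unambiguous because of the standing assumption that leaves and internal nodes of $\T$ do not share colors, so no parameter is inadvertently forced to be shared between a diagonal entry (governed by a vertex color) and an off-diagonal entry (governed by an edge color). Once this color dictionary is set up, the three defining conditions of $\mathscr{L}_\G$ are precisely the three defining conditions of $L_\T$ re-expressed in the language of the derived graph, yielding $L_\T = \mathscr{L}_\G$.
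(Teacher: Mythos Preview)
Your proposal is correct and is simply an explicit unpacking of the paper's one-line proof (``This follows from the construction of BMT-derived graphs.''). The entry-by-entry matching of the three defining clauses of $L_\T$ with conditions (1)--(3) of $\mathscr{L}_\G$ via properties (1)--(3) of the BMT-derived graph is exactly the verification the paper leaves implicit.
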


\begin{proof}
    This follows from the construction of BMT-derived graphs.
\end{proof}

This observation motivates the following definition. The \emph{BMT-derived model from~$\T$} is the colored Gaussian graphical model with linear space of concentration matrices~$L_\T=\L_\G$ intersected with the positive definite cone. 

For an uncolored tree~$T$ with~$Z(T)=\emptyset$, the associated BMT-derived graph is a colored complete graph~$\mathcal C$ where all vertices have different colors. In fact, all BMT-derived models can be obtained via a given set of graph operations on a complete graph~$\mathcal C$, as the following proposition shows.  

\begin{proposition}\label{prop:graph_from_colored_zeroed}
Let~$\mathcal{C}$ be the colored complete graph derived from an uncolored tree~$T$ with~$Z(T) = \emptyset$. The set of BMT-derived graphs obtained by adding colored and zeroed nodes to~$T$ is precisely the set of colored graphs obtained by any combination of the following operations on~$\mathcal{C}$:
\begin{multicols}{2} \begin{enumerate} \item merging vertex colors, \item deleting vertices, \item merging edge colors,  \item deleting edge colors. \end{enumerate} \end{multicols}
\end{proposition}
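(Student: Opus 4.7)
The plan is to prove the equality of the two sets by setting up an explicit dictionary between pairs $(\Lambda_\T, \rZ(\T))$ extending $T$ and finite sequences of operations (1)--(4) applied to $\mathcal{C}$. The key underlying observation is that in $\mathcal{C}$ itself, the vertex set is in bijection with the non-root leaves of $T$, while the edge color classes are in bijection with the internal nodes of $T$ via the assignment $\{i,j\}\mapsto \lca_T(i,j)$; this is immediate from the definition of the BMT-derived graph applied to the trivially colored, unzeroed $T$. Each of the three defining conditions of a BMT-derived graph modifies exactly one of these two partitions, which is what suggests that the four operations listed should suffice.

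For the forward inclusion, given $\T=(T,\Lambda_\T,\rZ(\T))$ with BMT-derived graph $\G$, I would build $\G$ from $\mathcal{C}$ in three stages: apply (1) to merge the vertex colors of $\mathcal{C}$ according to the equivalence relation on $\Lv(T)$ induced by $\Lambda_\T$; apply (3) to merge the edge color classes of $\mathcal{C}$ according to the equivalence relation on $\Int(T)$ induced by $\Lambda_\T$; and apply (4) to delete every edge color class indexed by an element of $\rZ(\T)$. By construction, the resulting colored graph satisfies the three defining properties of $\G$ laid out just before the proposition.

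For the reverse inclusion, given a colored graph $\G'$ reached from $\mathcal{C}$ by some composition of (1)--(4), I would read off the data defining a compatible $\T$: the vertex color partition of $\G'$ defines $\Lambda_\T$ on $\Lv(T)$; the edge color partition on surviving edges defines $\Lambda_\T$ on those internal nodes of $T$ whose edge color class has not been deleted entirely; and the edge color classes that have been deleted entirely determine $\rZ(\T)$. A short verification would show that the final graph depends only on the resulting equivalence relations on vertices and edges together with the set of surviving edges, so the order in which operations are applied is immaterial.

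The delicate step I anticipate is handling operation (2). I expect to argue that deleting a vertex can always be realized within the BMT-derived framework by zeroing every internal node that appears as $\lca_T(i,j)$ for some $j\in \Lv(T)\setminus\{i\}$, which isolates leaf $i$ in the resulting BMT-derived graph; up to removal of such isolated vertices, this reduces operation (2) to a specific instance of repeated use of (4). Verifying this case and checking carefully that every combination of operations produces a graph in the BMT-derived family, and conversely, should complete the argument.
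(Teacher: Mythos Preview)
Your overall plan---setting up a dictionary between tree data $(\Lambda_\T,\rZ(\T))$ and the four graph operations---is exactly the paper's approach, and your treatment of operations (1), (3), (4) matches theirs: coloring leaves $\leftrightarrow$ merging vertex colors, coloring internal nodes $\leftrightarrow$ merging edge colors, zeroing an internal node $\leftrightarrow$ deleting an edge-color class.

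The gap is in your handling of operation (2). Your proposed reduction does not work: zeroing every internal node of the form $\lca_T(i,j)$ for $j\neq i$ does \emph{not} merely isolate the vertex $i$. The set $\{\lca_T(i,j):j\neq i\}$ is precisely the set of all internal ancestors of $i$, up to and including the top internal node. Zeroing all of these deletes, for instance, every edge $\{j,k\}$ between two siblings $j,k$ of $i$ (since $\lca_T(j,k)=\pa(i)$), and more generally every edge whose $\lca$ lies on the path from $i$ to the root. So the collateral damage is substantial, and operation (2) cannot be absorbed into repeated use of (4) in this way.

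The paper handles (2) differently and more directly: deleting a vertex $\ell$ in $\mathcal{C}$ corresponds to deleting the \emph{leaf} $\ell$ from $T$, i.e.\ passing to the induced subtree on $\Lv(T)\setminus\{\ell\}$. Thus ``adding colored and zeroed nodes to $T$'' in the proposition is being read broadly enough to include restricting to subtrees. With that interpretation, the four tree operations (color leaves, delete leaves, color internal nodes, zero internal nodes) line up one-to-one with the four graph operations, and no reduction of (2) to (4) is needed.
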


\begin{proof}
Setting two leaves~$i,j$ in the tree to share the same color corresponds to setting vertices~$i,j$ in the graph to share the same color. {Zeroing} a leaf~$\ell$ {in the tree} corresponds to deleting a vertex~$\ell$ in the graph. Setting two internal nodes~$k,m$ of the tree to have the same color corresponds to merging the edge colors of the graph that correspond to~$k$ and~$m$, {(all edges $\{i,j\}$ such that $\lca_T(i,j) = k$ or $m$).} {Zeroing} an internal node~$t$ corresponds to deleting the edges whose colors are given by~$t$. 
\end{proof}

\subsection{The generalized path map}  We naturally generalize 
the monomial map (\ref{eqn:shortest path map}) on~$T$ to the \emph{generalized path map} on a colored tree with zeroed nodes~$\T$ as follows: 
\begin{align} \label{eqn:generalized path map}
\varphi_\T: \C[p_{ij} \mid 0\leq i<j\leq n] \to \C[\theta_{\lambda} \mid \lambda\in \Lambda_\T], \ 
p_{ij} \mapsto \prod_{\substack{(\ell, k)\in i \leftrightsquigarrow j\\k\notin Z(\T)}} \theta_{\lambda_\T(k)}. 
\end{align} 
We will provide combinatorial conditions on~$\T$ and~$\G$ under which the reciprocal variety is toric under an appropriate linear transformation. In all these cases, the generalized path map~$\varphi_\T$, up to some small change, will successfully parametrize the model. 

\section{BMT-derived models from coloring the tree}\label{sec:4}

\subsection{Coloring leaves}
A colored graph is \emph{vertex-regular} if vertices of the same color are adjacent to edges of the same colors, counting multiplicity. First, we consider trees with color restrictions only on their leaves. In \Cref{prop:vertex_reg_iff}, we show that a BMT-derived graph from such a tree is vertex-regular if and only if leaves of the same color have the same parent. Here, we show that for any colored tree~$\T$ satisfying this, the reciprocal variety~$L_{\T}^{-1}$ is toric under the reduced graph Laplacian transformation. The following example shows that vertex-regularity is closely related to toric structure.

\begin{example}
In \Cref{fig:vertex_regular_tree}, we consider BMT-derived graphs obtained from different colorings of the leaves.
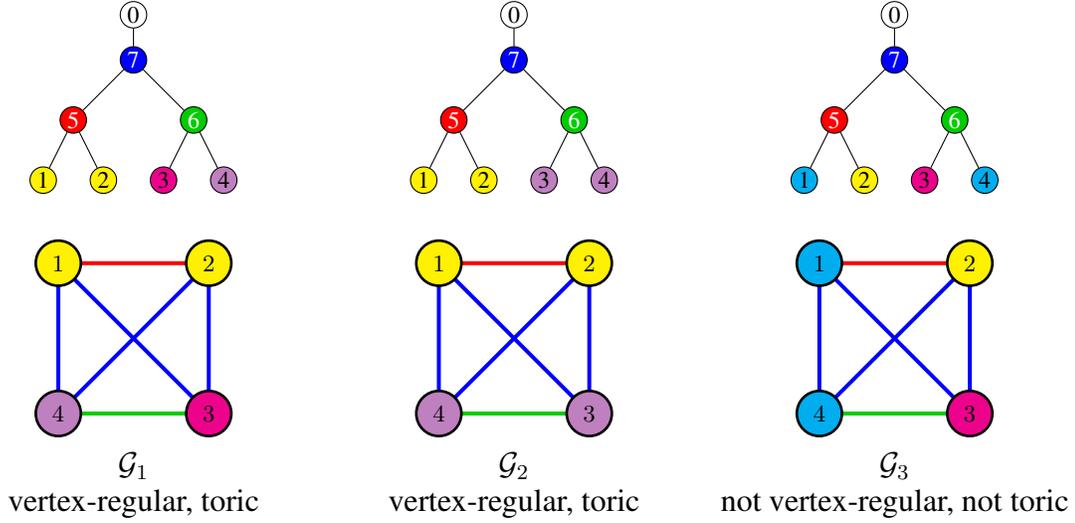
\begin{figure}[h]
\centering
\begin{subfigure}[b]{0.3\textwidth}
    \centering
    \begin{tikzpicture}[scale=0.4]
            \tikzset{
                VertexStyle/.style={
                    shape=circle,
                    draw,
                    fill=white,
                    minimum size=10pt,
                    inner sep=0pt,
                    font=\tiny
                },
                EdgeStyle/.style={
                    color=black,
                    line width=0.5pt,
                    draw=none, fill=none, midway
                }
            }
            \Vertex[x=0, y=5.5, label=0, color=gray!0, style={opacity=0}]{n0}
            \Vertex[x=-1, y=0, label=2, color=yellow]{n2}
            \Vertex[x=-3, y=0, label=1, color=yellow]{n1}
            \Vertex[x=1, y=0, label=3, color=magenta]{n3}
            \Vertex[x=3, y=0, label=4, color=violet!50]{n4}
            \Vertex[x=-2, y=2, label=\textcolor{white}{5}, color=red]{n5}
            \Vertex[x=2, y=2, label=\textcolor{white}{6}, color=darkgreen]{n6}
            \Vertex[x=0, y=4, label=\textcolor{white}{7}, color=blue]{n7}

            \draw (n0) -- (n7);
            \draw (n7) -- (n6);
            \draw (n6) -- (n4);
            \draw (n7) -- (n5);
            \draw (n5) -- (n2);
            \draw (n5) -- (n1);
            \draw (n6) -- (n3);
        \end{tikzpicture}
\end{subfigure}
\begin{subfigure}[b]{0.3\textwidth}
    \centering
    \begin{tikzpicture}[scale=0.4]
            \tikzset{
                VertexStyle/.style={
                    shape=circle,
                    draw,
                    fill=white,
                    minimum size=10pt,
                    inner sep=0pt,
                    font=\tiny
                },
                EdgeStyle/.style={
                    color=black,
                    line width=0.5pt,
                    draw=none, fill=none, midway
                }
            }
            \Vertex[x=0, y=5.5, label=0, color=gray!0, style={opacity=0}]{n0}
            \Vertex[x=-1, y=0, label=2, color=yellow]{n2}
            \Vertex[x=-3, y=0, label=1, color=yellow]{n1}
            \Vertex[x=1, y=0, label=3, color=violet!50]{n3}
            \Vertex[x=3, y=0, label=4, color=violet!50]{n4}
            \Vertex[x=-2, y=2, label=\textcolor{white}{5}, color=red]{n5}
            \Vertex[x=2, y=2, label=\textcolor{white}{6}, color=darkgreen]{n6}
            \Vertex[x=0, y=4, label=\textcolor{white}{7}, color=blue]{n7}

            \draw (n0) -- (n7);
            \draw (n7) -- (n6);
            \draw (n6) -- (n4);
            \draw (n7) -- (n5);
            \draw (n5) -- (n2);
            \draw (n5) -- (n1);
            \draw (n6) -- (n3);
        \end{tikzpicture}
\end{subfigure}
\begin{subfigure}[b]{0.3\textwidth}
    \centering
    \begin{tikzpicture}[scale=0.4]
            \tikzset{
                VertexStyle/.style={
                    shape=circle,
                    draw,
                    fill=white,
                    minimum size=10pt,
                    inner sep=0pt,
                    font=\tiny
                },
                EdgeStyle/.style={
                    color=black,
                    line width=0.5pt,
                    draw=none, fill=none, midway
                }
            }
            \Vertex[x=0, y=5.5, label=0, color=gray!0, style={opacity=0}]{n0}
            \Vertex[x=-1, y=0, label=2, color=yellow]{n2}
            \Vertex[x=-3, y=0, label=1, color=cyan]{n1}
            \Vertex[x=1, y=0, label=3, color=magenta]{n3}
            \Vertex[x=3, y=0, label=4, color=cyan]{n4}
            \Vertex[x=-2, y=2, label=\textcolor{white}{5}, color=red]{n5}
            \Vertex[x=2, y=2, label=\textcolor{white}{6}, color=darkgreen]{n6}
            \Vertex[x=0, y=4, label=\textcolor{white}{7}, color=blue]{n7}

            \draw (n0) -- (n7);
            \draw (n7) -- (n6);
            \draw (n6) -- (n4);
            \draw (n7) -- (n5);
            \draw (n5) -- (n2);
            \draw (n5) -- (n1);
            \draw (n6) -- (n3);
        \end{tikzpicture}
\end{subfigure}

\vspace{0.5cm}

\begin{subfigure}[t]{0.3\textwidth}
\centering
 \begin{tikzpicture}
    \Vertex[label=$1$,color=yellow]{A};
  \Vertex[x=2,label=$2$,color=yellow]{B};
{\Vertex[y=-2,label=$4$,color=violet!50]{D}};
    \Vertex[x=2,y=-2, label=$3$,color=magenta]{C};
    \Edge[color=red](A)(B);
    \Edge[color=blue](A)(D);
    \Edge[color=blue](B)(D);
    \Edge[color=darkgreen](C)(D);
    \Edge[color=blue](A)(C);
    \Edge[color=blue](B)(C);
\end{tikzpicture}
\begin{center}
{$\G_1$}

{vertex-regular, toric}
\end{center}
\end{subfigure}
    \begin{subfigure}[t]{0.3\textwidth}
\centering
\begin{tikzpicture}
    \Vertex[label=$1$,color=yellow]{A};
  \Vertex[x=2,label=$2$,color=yellow]{B};
{\Vertex[y=-2,label=$4$,color=violet!50]{D}};
    \Vertex[x=2,y=-2, label=$3$,color=violet!50]{C};
    \Edge[color=red](A)(B);
    \Edge[color=blue](A)(D);
    \Edge[color=blue](B)(D);
    \Edge[color=darkgreen](C)(D);
    \Edge[color=blue](A)(C);
    \Edge[color=blue](B)(C);
\end{tikzpicture}
\begin{center}
{$\G_2$}

{vertex-regular, toric}
\end{center}
\end{subfigure}
\begin{subfigure}[t]{0.3\textwidth}
        \centering 
        \begin{tikzpicture}
            \Vertex[label=$1$,color=cyan]{A};
        \Vertex[x=2,label=$2$,color=yellow]{B};
            \Vertex[y=-2,label=$4$,color=cyan]{D};
            \Vertex[x=2,y=-2, label=$3$,color=magenta]{C};
            \Edge[color=red](A)(B);
            \Edge[color=blue](A)(D);
            \Edge[color=blue](B)(D);
            \Edge[color=darkgreen](C)(D);
            \Edge[color=blue](A)(C);
            \Edge[color=blue](B)(C);
        \end{tikzpicture}
        \begin{center}
        {$\G_3$}

        {not vertex-regular, not toric}
        \end{center}
    \end{subfigure}
    \caption{Vertex-regular and non-vertex-regular colorings for the tree in \Cref{fig:uncolored}.}
     \label{fig:vertex_regular_tree}
\end{figure}
Under the reduced graph Laplacian transformation, the vanishing ideals of the reciprocal varieties are
    \vspace{-0.5em}
 \begin{align*}
  \I_{\G_1} &=  \langle p_{14} - p_{24},p_{13}-p_{23},p_{01}-p_{02},p_{04}p_{23}-p_{03}p_{24}\rangle\\
     &= I_{T} + \langle p_{14} - p_{24},p_{13}-p_{23},p_{01}-p_{02}\rangle\\
      \I_{\G_2} &= \langle p_{23}-p_{24},p_{14}-p_{24},p_{13}-p_{24},p_{03}-p_{04},p_{01}-p_{02}\rangle\\
      &= I_T + \langle p_{14} - p_{24},p_{13}-p_{23},p_{01}-p_{02},p_{13}-p_{14}, p_{23}-p_{24},p_{03}-p_{04}\rangle\\
 \I_{\G_3} &= \left\langle p_{14}p_{23}-p_{13}p_{24},\,p_{04}p_{23}-p_{03}p_{24},\,p_{02}p_{14}-p_{01}p_{24},\,p_{04}p_{13}-p_{03}p_{14},\right.\\
 &\quad\left.\,p_{02}p_{13}-p_{01}p_{23},\,p_{01}p_{02}p_{03}-p_{02}p_{03}p_{04}+p_{01}p_{03}p_{12} + \dots -p_{04}p_{24}p_{34}\right\rangle.
    \end{align*}
Vertex-regular coloring coincides with toricness in these coordinates. Additionally, the ideals corresponding to the vertex-regular graphs can be expressed as the sum of~$I_T$ with an ideal whose generators capture the vertex symmetries of the graph. We formalize this observation in \Cref{thm:vertex_regular}. Using the algorithm in~\cite{kahle2024lie}, we confirm that the vanishing ideal~$\I_{\G_3}$ of the non-vertex-regular graph is not toric under any linear change of variables. 
\end{example}
To facilitate the proofs, we discuss the combinatorics of vertex-regular colored graphs. The \emph{parent} of node~$i$, denoted~$\pa(i)$, is the internal node~$j$ such that there is a directed edge from~$j$ to~$i$.
\begin{proposition} \label{prop:vertex_reg_iff}
      Let~$\G$ be a BMT-derived complete graph from tree~$\T$. Then~$\G$ is vertex-regular if and only if leaves of the same color have the same parent. 
\end{proposition}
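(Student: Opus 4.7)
The plan is to restate vertex-regularity purely in terms of the tree combinatorics of $\T$ and then verify each direction by a short case analysis on least common ancestors. Since $\G$ is the BMT-derived complete graph, the edge $\{v,k\}$ carries the color corresponding to $\lca(v,k)$, so vertex-regularity at two same-colored leaves $i,j$ is equivalent to the equality of multisets
\[
\{\lca(i,k):k\in\Lv(\T)\setminus\{i\}\} \;=\; \{\lca(j,k):k\in\Lv(\T)\setminus\{j\}\}.
\]
Equivalently, for every internal node $a$, the multiplicity $\deg_a(v):=|\{k\in\Lv(\T)\setminus\{v\}:\lca(v,k)=a\}|$ should agree at $v=i$ and $v=j$.

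For $(\Leftarrow)$, I would assume same-colored leaves $i,j$ share a parent $p$. Then $i$ and $j$ have identical chains of ancestors from $p$ up to the root, which forces $\lca(i,k)=\lca(j,k)$ for every leaf $k\notin\{i,j\}$, while the edge $\{i,j\}$ itself is recorded with color $p$ on either side. Pairing $\{i,k\}$ with $\{j,k\}$ for $k\ne i,j$ and the edge $\{i,j\}$ with itself identifies the two multisets term-by-term.

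For $(\Rightarrow)$, I assume $\G$ is vertex-regular, set $p=\pa(i)$, $q=\pa(j)$, and $r=\lca(i,j)$, and aim to derive a contradiction from $p\ne q$. For a leaf $v$ and an internal ancestor $a$ of $v$, one has $\deg_a(v)=L(a)-L(c_v(a))$, where $L(\cdot)$ counts leaves in the subtree and $c_v(a)$ is the child of $a$ on the path to $v$; if $a$ is not an ancestor of $v$, then $\deg_a(v)=0$. In particular $\deg_p(i)=L(p)-1$. I then split on the position of $p$ relative to $r$. If $p\ne r$, then $p$ lies strictly below $r$ on the path to $i$, hence is not an ancestor of $j$, so $\deg_p(j)=0$ while $\deg_p(i)\ge 1$. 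If instead $p=r$, then $j$ descends from $p$ through some internal child $c\ne i$ (since $q\ne p$ forces $j$ not to be a direct child of $p$), giving $\deg_p(j)=L(p)-L(c)\le L(p)-2<\deg_p(i)$. Both cases contradict vertex-regularity, so $p=q$.

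The only delicate input, and the main thing to be explicit about, is the standard non-degeneracy convention that every internal node of a phylogenetic tree has at least two leaf descendants (equivalently, at least two children each containing a leaf). This is exactly what guarantees $L(c)\ge 2$ in the case $p=r$ and $L(p)\ge 2$ in the case $p\ne r$; without it, a chain-like subtree could produce a vertex-regular graph while violating the common-parent condition, so this hypothesis needs to be flagged in the proof.
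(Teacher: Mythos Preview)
Your proof is correct and follows essentially the same idea as the paper's: for the $(\Rightarrow)$ direction, both arguments exploit that the color $\pa(i)$ appears with different multiplicity at $i$ and $j$ unless the parents coincide. The paper is terser---it only treats $(\Rightarrow)$, and instead of your case split on whether $p=r$, it argues that $j$ must be a descendant of $\pa(i)$ and then invokes symmetry to force $i$ to be a descendant of $\pa(j)$, which immediately gives $\pa(i)=\pa(j)$. Your explicit treatment of $(\Leftarrow)$ and your flag on the non-degeneracy convention (each internal node has at least two leaf descendants) are both appropriate; the paper uses that hypothesis implicitly without stating it.
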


\begin{proof}
Suppose that the leaves~$i,j$ satisfy~$\lambda_\T(i) = \lambda_\T(j)$ and that leaf~$j$ is not a descendant of~$\pa(i)$. Then vertex~$j$ is not adjacent to any edge with color~$\lambda_\T(\pa(i))$, so leaf~$j$ must be a descendant of~$\pa(i)$. By symmetry, leaf~$i$ is also a descendant of~$\pa(j)$, so the result follows.
\end{proof}
An immediate corollary is the following.
\begin{corollary}
    Let~$\G$ be a BMT-derived complete graph from tree~$\T$. If $\lambda_{\G}(i)=\lambda_{\G}(j)$ for some pair of vertices~$i, j$, then~$\lambda_{\G}(\{i,k\})=\lambda_{\G}(\{j,k\})$ for any vertex~$k\neq i,j$. 
\end{corollary}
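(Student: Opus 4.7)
The plan is to reduce the claim to an equality of least common ancestors in $\T$ and then invoke property (3) of the BMT-derived graph construction. I read the corollary as the natural vertex-regularity consequence $\lambda_\G(\{i,k\})=\lambda_\G(\{j,k\})$, i.e., from any third vertex, the edges to $i$ and $j$ receive the same color.

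First, by property (1) of the BMT-derived graph the hypothesis $\lambda_\G(i)=\lambda_\G(j)$ gives $\lambda_\T(i)=\lambda_\T(j)$, so Proposition~4.2 forces $i$ and $j$ to share a common parent $p=\pa(i)=\pa(j)$ in $\T$.

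The heart of the argument is to verify that $\lca(i,k)=\lca(j,k)$ for every vertex $k\neq i,j$ of $\G$, which corresponds to a non-root leaf of $\T$ distinct from $i,j$. I would split on whether $k$ is a descendant of $p$. If $k$ lies in the subtree rooted at $p$, then since $i,j$ are leaves (hence have no descendants) and both are children of $p$, the leaf $k$ must descend from a different child of $p$; the paths $i\leftrightsquigarrow k$ and $j\leftrightsquigarrow k$ therefore first meet at $p$, and $\lca(i,k)=p=\lca(j,k)$. If $k$ is not a descendant of $p$, then every path from $k$ into the subtree rooted at $p$ must pass through $p$, so $\lca(i,k)=\lca(p,k)=\lca(j,k)$.

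Finally, property (3) of the BMT-derived graph says that two edges share a color precisely when their endpoint pairs share the same lca in $\T$. Applying this to $\{i,k\}$ and $\{j,k\}$ yields the equality. There is no substantive obstacle: the statement is immediate once Proposition~4.2 is in hand, and the only point to check is that both positions of $k$ relative to $p$ produce the same lca, which is a short tree-combinatorial observation.
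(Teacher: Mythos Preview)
Your reading of the statement as $\lambda_\G(\{i,k\})=\lambda_\G(\{j,k\})$ is correct; the printed $\{i,j\}$ is a typo, as confirmed by how the corollary is used in the definition of the vertex-regular completion immediately afterward. Your argument is exactly the intended one: the paper gives no proof beyond declaring it an ``immediate corollary'' of Proposition~4.2, and your reduction to $\lca(i,k)=\lca(j,k)$ via the shared parent is precisely the content of that immediacy.
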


Given a vertex-regular BMT-derived graph~$\G$, we define its {\textit{vertex-regular completion}}~$\overline{\G}$ as the complete colored graph on the same vertex set as~$\G$ whose coloring captures the vertex symmetries of~$\G$ as follows: 
\begin{enumerate}
    \item    if~$\lambda_{{\G}}(i) =\lambda_{{\G}}(j)$ in~$\G$, then~$\lambda_{\overline{\G}}(i) =\lambda_{\overline{\G}}(j)$ in~$\overline{\G}$,
    \item  if~$\lambda_{{\G}}(i) = \lambda_{{\G}}(j)$ in~$\G$, then~$\lambda_{\overline{\G}}(\{i,k\}) =\lambda_{\overline{\G}}(\{j,k\})$ for  all~$k\neq i,j$ in~$\overline{\G}$,
    \item all remaining vertices and edges have other distinct colors. 
\end{enumerate}

The linear space~$\L_{\overline{\G}}$ contains~$\L_\G$, so~$\L^{-1}_\G\subseteq \L^{-1}_{\overline{\G}}$. The vanishing ideal of~$\L_{\overline{\G}}^{-1}$ is determined by binomial linear conditions, which includes all linear relations in the vanishing ideal of~$\L^{-1}_\G$. To prove this, it is sufficient to show that~$\L_{\overline{\G}}^{-1}$ is a \emph{Jordan algebra}. Recall that a linear space of symmetric matrices in~$\sym_n(\R)$ is a Jordan algebra if and only if it is closed under the~$\bullet$ operation, where~$\bullet$ on~$\sym_n(\R)$ is defined as~$$X\bullet Y = \frac{1}{2}(XY + YX).$$
We use the following result from Jensen to prove \Cref{lemma:jensen_corollary}.
\begin{lemma}[{\cite[Lemma 1]{jensen1988covariance}}]\label{jensen88} 
Let~$\L$ be a linear space of symmetric matrices and~$\L^{-1}$ its reciprocal variety. Then,~$\L = \L^{-1}$ if and only if~$\L$ is a subalgebra of the Jordan algebra on~$\sym_n(\R)$.  
\end{lemma}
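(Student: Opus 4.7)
The plan is to prove the two implications by relating closure under matrix inversion to closure under the Jordan product, using Cayley--Hamilton for one direction and a Taylor expansion of the inverse map for the other. Throughout, I will rely on the observation that in the applications of this paper the linear spaces of interest contain the identity matrix ($I \in \L_\G$ and $I \in \L_{\overline{\G}}$ by setting all vertex parameters equal to $1$ and all edge parameters to $0$), so I will first treat the unital case and then indicate how to handle the general one.

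For the backward direction, suppose $\L$ is closed under $\bullet$. Since $A \bullet A = A^2$, induction gives $A^k \in \L$ for every $A \in \L$ and every positive integer $k$. When $A \in \L$ is invertible, the Cayley--Hamilton theorem writes $A^{-1}$ as a polynomial in $A$ of degree at most $n-1$, and so $A^{-1} \in \L$. This yields $\L^{-1} \subseteq \L$ after taking Zariski closure, and the reverse inclusion follows by applying the same observation to $A^{-1} \in \L$.

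For the forward direction, suppose $\L = \L^{-1}$, pick an invertible $A \in \L$, and let $B \in \L$. For small $t$, the matrix $A + tB$ is in $\L$ and invertible, hence $(A+tB)^{-1} \in \L$. Since $\L$ is a closed linear subspace, every Taylor coefficient at $t = 0$ of the analytic curve $t \mapsto (A+tB)^{-1}$ lies in $\L$. Expanding $(A + tB)^{-1} = \sum_{k \geq 0} (-t)^k (A^{-1}B)^k A^{-1}$, the coefficient of $t^k$ gives $(A^{-1}B)^k A^{-1} \in \L$. When $I \in \L$, taking $A = I$ produces $B^k \in \L$ for every $k \geq 0$, and then $(B+C)^2 - B^2 - C^2 = BC + CB \in \L$ shows that $\L$ is closed under $\bullet$.

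The main obstacle is the forward direction without assuming $I \in \L$. In that setting, applying the same analytic argument to $A^{-1} \in \L$ yields $ABA \in \L$, and polarizing in $A$ by replacing $A$ with $A + sC$ gives the Jordan triple identity $ABC + CBA \in \L$ for all $B, C \in \L$ and invertible $A \in \L$. One can then transport this to the standard Jordan product by passing to the $A$-isotope of $\sym_n(\R)$: replacing the product $XY$ by $X A^{-1} Y$ makes $A$ play the role of an identity element, and the argument of the previous paragraph applied in the isotope recovers closure of $\L$ under $\bullet$. I expect this isotope reduction to be the technical heart of the proof, whereas the backward direction and the unital forward direction are immediate from Cayley--Hamilton and a direct Taylor expansion, respectively.
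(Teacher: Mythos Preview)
The paper does not prove this lemma; it is quoted from Jensen (1988) and used as a black box in the proof of \Cref{lemma:jensen_corollary}. So there is no paper proof to compare against, and the question is simply whether your argument stands on its own.

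Your treatment of the unital case is correct in both directions and is all that the paper actually needs, since $\L_{\overline{\G}}$ contains the identity. One small addition worth making explicit: in the backward direction you invoke Cayley--Hamilton to write $A^{-1}$ as a polynomial in $A$, but that polynomial has a constant term $cI$. You should note that once $A\in\L$ is invertible and $A,A^2,\dots,A^n\in\L$, the Cayley--Hamilton relation $A^n+c_{n-1}A^{n-1}+\dots+c_1A+c_0I=0$ with $c_0\neq 0$ itself forces $I\in\L$, after which your argument goes through.

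The non-unital forward direction, however, is genuinely problematic, and your isotope sketch does not repair it. In fact the lemma as literally stated is false without an extra hypothesis such as $I\in\L$ or $\L\cap\mathrm{PD}_n\neq\emptyset$. Take $n=2$ and $\L=\R A$ with $A=\mathrm{diag}(1,-1)$. Then $A^{-1}=A$, so $\L^{-1}=\L$, yet $A\bullet A=A^2=I\notin\L$, so $\L$ is not a Jordan subalgebra. Your isotope construction replaces the ambient product $XY$ by $XA^{-1}Y$; closure of $\L$ under the isotope Jordan product is a different condition from closure under the original $\bullet$, and in this example there is no way to pass from one to the other. Jensen's original statement carries an implicit hypothesis (his linear spaces model concentration matrices and meet the positive-definite cone, hence contain $I$ by the Cayley--Hamilton observation above), and you were right to flag that the paper's applications live in the unital setting.
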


\begin{lemma}\label{lemma:jensen_corollary} Let~$\G$ be a BMT-derived graph and let~$\overline{\G}$ be its vertex-regular completion. Then
$\L_{\overline{\G}}^{-1} = \L_{\overline{\G}}$ and it has vanishing toric linear ideal 
\begin{align*}\label{def_JGprime_ideal}
\left\langle \sigma_{ii} - \sigma_{jj}, \sigma_{ik} - \sigma_{jk} \mid  i,j \in [n], \lambda_{\overline{\G}}(i) = \lambda_{\overline{\G}}(j), k\in  [n]\setminus \{i,j\}\right\rangle.
\end{align*}
\end{lemma}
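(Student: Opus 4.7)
The plan is to apply Jensen's Lemma (Lemma 4.5), which reduces the identity $\L_{\overline{\G}}^{-1}=\L_{\overline{\G}}$ to verifying that $\L_{\overline{\G}}$ is closed under the Jordan product $X\bullet Y=\tfrac{1}{2}(XY+YX)$. Once that equality is in hand, the vanishing ideal of the reciprocal variety $\L_{\overline{\G}}^{-1}$ is simply the vanishing ideal of the linear space $\L_{\overline{\G}}$, and identifying the generators becomes an unpacking exercise.

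First I would make explicit the defining conditions of $\L_{\overline{\G}}$. Because $\overline{\G}$ is complete, no relation of the form $k_{ij}=0$ appears, and items (1) and (2) in the definition of the vertex-regular completion force exactly the linear relations $k_{ii}=k_{jj}$ and $k_{ik}=k_{jk}$ (for $k\neq i,j$) whenever $i$ and $j$ share a color. This will let me describe $\L_{\overline{\G}}$ as the fixed subspace of the group $H=\prod_c S_{V_c}$ of permutations acting within vertex color classes $V_c$, where $H$ acts on $\sym_n(\R)$ by simultaneous row-column permutation $X\mapsto PXP^{\top}$.

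The main step will be showing that this fixed subspace is a Jordan subalgebra. The cleanest route is a symmetry observation: if $PXP^{\top}=X$ and $PYP^{\top}=Y$ for every permutation matrix $P$ coming from $H$, then $P(X\bullet Y)P^{\top}=X\bullet Y$ as well, so $X\bullet Y\in\L_{\overline{\G}}$. A direct computation is equally available if preferred: for $i\sim j$ and $k\neq i,j$, the difference $(XY+YX)_{ik}-(XY+YX)_{jk}$ breaks into summands indexed by $m$; those with $m\neq i,j$ vanish using $x_{im}=x_{jm}$ and $y_{im}=y_{jm}$, while the contributions from $m\in\{i,j\}$ cancel in pairs using $x_{ii}=x_{jj}$, $y_{ii}=y_{jj}$, $x_{ik}=x_{jk}$, $y_{ik}=y_{jk}$, and the symmetry of $X$ and $Y$. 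The diagonal equality $(X\bullet Y)_{ii}=(X\bullet Y)_{jj}$ follows the same pattern. Jensen's Lemma then delivers $\L_{\overline{\G}}^{-1}=\L_{\overline{\G}}$.

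The final step is routine. The ideal in the statement is generated by differences of single variables, hence is the kernel of the monomial map sending each $\sigma_{ab}$ to a variable depending only on the unordered pair of vertex color classes of $a$ and $b$; this exhibits it as toric, and by construction its vanishing locus is precisely the linear space $\L_{\overline{\G}}=\L_{\overline{\G}}^{-1}$. The only point requiring genuine care is the Jordan-closure verification, and the group-fixed-point viewpoint above reduces it to essentially a one-line check.
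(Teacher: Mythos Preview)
Your proposal is correct and follows essentially the same route as the paper: both reduce to Jensen's Lemma and verify Jordan closure of $\L_{\overline{\G}}$. The paper carries out the Jordan check by writing $\L_{\overline{\G}}=\bigcap_{i\sim j}\L_{ij}$ and directly computing $(A\bullet B)_{ik}-(A\bullet B)_{jk}$ for each pair, which is exactly the ``direct computation'' alternative you sketch; your group-fixed-point argument, observing that $\L_{\overline{\G}}$ is the fixed space of $H=\prod_c S_{V_c}$ acting by $X\mapsto PXP^{\top}$ and hence automatically Jordan-closed, is a clean shortcut the paper does not make explicit but amounts to the same verification.
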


\begin{proof}
    By Lemma \ref{jensen88}, it suffices to show that~$\mathscr{L}_{\overline{\G}}$ is a Jordan algebra. We can view~$\L_{\overline{\G}}$ as the intersection of linear spaces~$\L_{ij}$, where~$\L_{ij}$ is defined as follows for any pair of vertices~$i,j$ sharing the same color in~$\G$:
    \[
    \L_{ij} = \left\{ T = (t_{\ell m})_{1\leq  \ell \leq  m \leq n} \in \sym_n(\R) \mid t_{ii} = t_{jj}, \, t_{ik} = t_{jk}  \,\forall \,k\not=i,j\right\}.
    \]
    Let~$A,B\in \L_{ij}$. It suffices to show that~$C = A\bullet B \in \L_{ij}$. It is easy to check that~$C_{ii} = C_{jj}$. Given~$k\neq i,j$, we have
\begin{align*}
  2(C_{ik} - C_{jk}) &=  \sum_{\ell=1}^n a_{i\ell} b_{\ell k} +  b_{i\ell} a_{\ell k} - \sum_{\ell=1}^n a_{j\ell} b_{\ell k} +  b_{j\ell} a_{\ell k} = 0,
\end{align*}
because~$a_{ii} = a_{jj},b_{ii} = b_{jj}$ and~$a_{i\ell} = a_{j\ell}, b_{i\ell} = b_{j\ell}$ for~$\ell\not=i,j$. Therefore,~$C_{ik} = C_{jk}$ for all~$k \neq i,j$, so~$C\in \L_{ij}$. Then~$\L_{ij}$ is a Jordan algebra. As such,~$\L_{\overline{\G}}$ is the intersection of Jordan algebras and thus it is also a Jordan algebra. {The generators of the vanishing ideal of $\mathcal{L}_{ij}$ are $t_{ii} - t_{jj}, t_{ik} - t_{jk}$ for all $k\not=i,j$, so the vanishing ideal of $\mathcal{L}_{\mathcal{\overline{G}}} = \bigcap_{\lambda_{\overline{\mathcal{G}}}(i) = \lambda_{\overline{\mathcal{G}}}(j)} \mathcal{L}_{ij}$ is 
\begin{align*}
    \langle t_{ii}-t_{jj}, t_{ik}-t_{ij}\;|\; i,j\in [n], \lambda_{\overline{\mathcal{G}}}(i) = \lambda_{\overline{\mathcal{G}}}(j), k\in [n]\setminus\{i,j\}\rangle. 
\end{align*}}\end{proof}
Before we state the main result of the section, note that the \emph{generalized path map}  (\ref{eqn:generalized path map}) on the colored tree~$\T$ with no zeroed nodes has the form: 
\begin{align} \label{eqn:generalized path map vertex}
\varphi_\T: \C[p_{ij} \mid 0\leq i<j\leq n] \to \C[\theta_{\lambda} \mid \lambda\in \Lambda_\T], \ 
p_{ij} \mapsto \prod_{\substack{(\ell, k)\in i \leftrightsquigarrow j}} \theta_{\lambda_\T(k)}.
\end{align}
Let~$\I_{\overline{\G}}$ be the ideal in~$\C[p_{ij} \mid  0\leq i < j \leq n]$ obtained by substitution of variables in the ideal given in \Cref{lemma:jensen_corollary} as follows:
\begin{align*}
    \sigma_{ii} = p_{0i}\textrm{, and }\sigma_{ij} = p_{ij} \textrm{ for }1\leq i< j\leq n.
\end{align*}
\begin{theorem}\label{thm:vertex_regular}
Let~$\T = (T,\lambda_\T,\emptyset)$ be a colored tree,
let~${\G}$ be its BMT-derived graph, and let {${\overline{\G}}$ be the vertex-regular completion of~$\G$}. Then, $\L_{\G}^{-1}=L_T^{-1} \cap \L_{\overline{\G}}^{-1}$ is toric under the reduced graph Laplacian transformation. It has toric vanishing ideal~$\sqrt{I_T+ \I_{\overline{\G}}}=\ker \varphi_\T$, where~$\varphi_\T$ is the generalized path map for~$\T$ given in (\ref{eqn:generalized path map vertex}).
\end{theorem}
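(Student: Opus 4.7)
The plan is to realize $\L_\G$ as the intersection $L_T \cap \L_{\overline{\G}}$ of two linear spaces whose reciprocal varieties are already known to be toric under the reduced graph Laplacian, and then invoke \Cref{thm:ACTUAL_INTERSECTION}. First I would establish the linear identity $\L_\G = L_T \cap \L_{\overline{\G}}$. The inclusion $\L_\G \subseteq L_T \cap \L_{\overline{\G}}$ is immediate. For the reverse, any $M \in L_T$ satisfies $M_{ik} = t_{\lca(i,k)}$; by \Cref{prop:vertex_reg_iff}, leaves $i, j$ of the same color in $\T$ share a parent in $T$, so $\lca(i, k) = \lca(j, k)$ for every $k \ne i, j$, and the off-diagonal equalities $\sigma_{ik} = \sigma_{jk}$ defining $\L_{\overline{\G}}$ are automatically satisfied on $L_T$. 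Thus the only new condition imposed by $\L_{\overline{\G}}$ is $\sigma_{ii} = \sigma_{jj}$ for same-color leaves, which is precisely the leaf-coloring constraint cutting $\L_\G$ out of $L_T$.

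With this decomposition, I would apply \Cref{thm:ACTUAL_INTERSECTION} to $V_1 = L_T$ and $V_2 = \L_{\overline{\G}}$, with $\rho$ the matrix inverse map (\ref{eqn:general inverse_map}) and $F$ the reduced graph Laplacian (\ref{eqn:graph_laplacian_BMT}). Irreducibility is automatic since all three spaces are linear, and each contains the identity matrix so that $\rho$ is birational on each. By \Cref{thm:brownian_toric}, $I(\rho(L_T))$ equals $I_T$ after applying $F$. By \Cref{lemma:jensen_corollary}, $\L_{\overline{\G}}^{-1} = \L_{\overline{\G}}$, whose vanishing ideal is the linear binomial ideal generated by $\sigma_{ii} - \sigma_{jj}$ and $\sigma_{ik} - \sigma_{jk}$ for same-color leaves; substituting $\sigma_{ij} = -p_{ij}$ and $\sigma_{ii} = p_{0i} + \sum_{k \ne i} p_{ik}$ and simplifying modulo the off-diagonal binomials produces exactly $\I_{\overline{\G}}$. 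Then \Cref{thm:ACTUAL_INTERSECTION} yields that $\L_\G^{-1} = L_T^{-1} \cap \L_{\overline{\G}}^{-1}$ is toric under $F$ with vanishing ideal $\sqrt{I_T + \I_{\overline{\G}}}$.

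To identify $\sqrt{I_T + \I_{\overline{\G}}}$ with $\ker \varphi_\T$, factor $\varphi_\T = \pi \circ \varphi_T$, where $\pi$ sends $\theta_k \mapsto \theta_{\lambda_\T(k)}$; then $I_T = \ker \varphi_T \subseteq \ker \varphi_\T$, and a direct monomial computation using $\lca(i,k) = \lca(j,k)$ and $\lambda_\T(i) = \lambda_\T(j)$ for same-color leaves shows $\I_{\overline{\G}} \subseteq \ker \varphi_\T$. Primality upgrades this to $\sqrt{I_T + \I_{\overline{\G}}} \subseteq \ker \varphi_\T$. The reverse inclusion follows because $\varphi_\T$, up to the change of variables $F$, is the composition of the linear parametrization of $\L_\G$ by $(t_\lambda)_{\lambda \in \Lambda_\T}$ with the inverse map $\rho$, so $\overline{\mathrm{Im}(\varphi_\T)} = \L_\G^{-1}$ and the two prime ideals cutting out this variety must coincide. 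The main obstacle is the first step: the identity $\L_\G = L_T \cap \L_{\overline{\G}}$ genuinely requires vertex-regularity (as $\G_3$ in \Cref{fig:vertex_regular_tree} shows, without it $\L_\G^{-1}$ need not even be toric), so \Cref{prop:vertex_reg_iff} must be invoked carefully to ensure that the off-diagonal equalities in $\L_{\overline{\G}}$ are redundant on $L_T$.
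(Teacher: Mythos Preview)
Your approach mirrors the paper's almost exactly: both decompose $\L_\G = L_T \cap \L_{\overline{\G}}$, verify that each factor's reciprocal variety is toric under the reduced graph Laplacian (using \Cref{thm:brownian_toric} and \Cref{lemma:jensen_corollary} respectively), invoke \Cref{thm:ACTUAL_INTERSECTION}, and then identify the resulting toric ideal with $\ker\varphi_\T$. Your treatment of the decomposition $\L_\G = L_T \cap \L_{\overline{\G}}$ via \Cref{prop:vertex_reg_iff}, and of how the Laplacian transforms the linear generators of $\L_{\overline{\G}}$ into $\I_{\overline{\G}}$, is in fact more explicit than the paper's, which simply asserts these facts.

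The one genuine gap is in your argument for the reverse inclusion $\ker\varphi_\T \subseteq \sqrt{I_T + \I_{\overline{\G}}}$. You claim that $\varphi_\T$ is, up to the change of variables $F$, the composition of the linear parametrization of $\L_\G$ by $(t_\lambda)_{\lambda\in\Lambda_\T}$ with the inverse map $\rho$. This is not correct: that composition is a rational (in fact highly non-monomial) map in the parameters $t_\lambda$, whereas $\varphi_\T$ is a monomial map in the $\theta_\lambda$. There is no a priori reason these two maps should have the same image closure, and asserting $\overline{\mathrm{Im}(\varphi_\T)} = \L_\G^{-1}$ is precisely the conclusion you are trying to establish. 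The paper closes this gap with a dimension count instead: both $\sqrt{I_T + \I_{\overline{\G}}}$ and $\ker\varphi_\T$ are prime, the first is contained in the second, and both have Krull dimension $|\Lambda_\T|$ (the former because $\dim \L_\G = |\Lambda_\T|$ and matrix inversion is birational, the latter because every $\theta_\lambda$ appears in the image of $\varphi_\T$). Since a prime ideal cannot strictly contain another prime ideal of the same dimension, equality follows.
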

\begin{proof}
First, observe that the vanishing ideal for~$\L_{\overline{\G}}^{-1}$ given in \Cref{lemma:jensen_corollary}  under the reduced graph Laplacian transformation is precisely the toric ideal~$\I_{\overline{\G}}$. Indeed, 
when~$\lambda_\T(i)=\lambda_\T(j)$ we get the following mapping of binomials:
\begin{align*}
\sigma_{ik}-\sigma_{jk}&\mapsto p_{ik}-p_{jk} \qquad \text{ when } k\neq i,j,\\
\sigma_{ii}-\sigma_{jj}&\mapsto p_{0i}-p_{0j} +\left(\sum_{ k\neq i,j}p_{ik}-p_{jk}\right).
\end{align*}
We can use the first set of quadrics above to substitute the last generator with~$p_{0i}-p_{0j}$ and get a binomial set of generators. The ideal~$I_T$ {in the reduced graph Laplacian coordinates} is toric by \Cref{thm:brownian_toric}.  

The irreducible variety~$L_\T$ is the intersection of irreducible varieties~$L_T$ and~$\L_{\overline \G}$. Hence, by \Cref{thm:ACTUAL_INTERSECTION},~$\L_{\G}^{-1}=L_T^{-1} \cap \L_{\overline{\G}}^{-1}$ has vanishing ideal $\sqrt{I_T+\I_{\overline {\G}}}$, which is toric under the reduced graph Laplacian transformation.

Lastly, we show that~$\sqrt{I_T+\I_{\overline \G}}=\ker \varphi_\T$.  Observe that any generator of~$I_T + \I_{\overline{\G}}$ is mapped to zero under~$\varphi_\T$, so~$I_T + \I_{\overline{\G}}\subseteq \ker \varphi_\T$. Taking radicals on both sides, we get~$\sqrt{I_T + \I_{\overline{\G}}}\subseteq \ker \varphi_\T$, because~$\ker \varphi_\T$ is radical. Note that~$\dim(\ker \varphi_\T)=\dim\left(\sqrt{I_T + \I_{\overline{\G}}}\right)=|\Lambda_\T|$.
Since a prime ideal cannot strictly contain another prime ideal of the same dimension, we obtain~$\sqrt{I_T + \I_{\overline{\G}}}= \ker \varphi_\T$.
\end{proof}

For all non-vertex-regular BMT-derived graphs on four vertices and all other non-vertex-regular BMT-derived graphs that we tested, we found that the vanishing ideal is not toric under any linear change of variables.  We conjecture that this is true for all complete BMT-derived graphs.

\begin{conjecture}
Let~$\T$ have distinct colors on all internal nodes. Then,~$L_\T^{-1}$ is toric if and only if leaves with the same color have the same parent. Equivalently, $L_\T^{-1}$ is toric if and only if its complete BMT-derived graph~$\G$ is vertex-regular. 
\end{conjecture}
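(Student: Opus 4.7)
The forward direction is immediate from Theorem~\ref{thm:vertex_regular}: if $\G$ is vertex-regular, then $L_\T^{-1}$ is already known to be toric under the reduced graph Laplacian. All the content of the conjecture therefore lies in the converse, and my plan is to prove the contrapositive in three stages: (i) extract a minimal non-vertex-regular sub-configuration, (ii) prove non-toricness of that base case under any linear change of variables, and (iii) lift non-toricness back to the original $\T$.

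For stage (i), assume $i,j$ are leaves with $\lambda_\T(i)=\lambda_\T(j)$ but $\pa(i)\neq \pa(j)$. Since the internal path from $\lca(i,j)$ down to $i$ enters $\pa(i)\neq \pa(j)$, I can locate a leaf $k$ with $\lca(i,k)\neq \lca(j,k)$ by taking any leaf below the sibling subtree at $\pa(i)$. Completing $\{i,j,k\}$ to a four-leaf induced subtree $\T'$ (adjoining one more leaf if needed to give a binary shape), the BMT-derived graph $\G'$ of $\T'$ is non-vertex-regular, in fact isomorphic to the graph $\G_3$ of Figure~\ref{fig:vertex_regular_tree}.

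For stage (ii), the invariant I would use is the Lie algebra $\mathfrak{g}(V)\subseteq \mathfrak{gl}_{N+1}$ of infinitesimal linear symmetries of the projective variety $V=L_{\T'}^{-1}$: its isomorphism type is preserved by $\mathrm{GL}_{N+1}$-conjugation and is therefore an invariant of $V$ under arbitrary linear changes of variables. For a toric variety of codimension $c$, this algebra must contain an abelian diagonalizable subalgebra of dimension at least $c$, and the algorithm of \cite{kahle2024lie} verifies computationally that this constraint fails for $\G_3$; the explicit non-binomial cubic generator $p_{01}p_{02}p_{03}-p_{02}p_{03}p_{04}+p_{01}p_{03}p_{12}+\cdots-p_{04}p_{24}p_{34}$ is a symptom. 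The task at this stage is to enumerate the finitely many 4-leaf non-vertex-regular shapes and apply the Lie algebra test to each as a base case, or ideally to replace the case-check with a uniform structural argument.

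Stage (iii) is where I expect the main difficulty. The relationship between $L_\T^{-1}$ and $L_{\T'}^{-1}$ is not captured by any simple submatrix operation: on concentration coordinates, the principal $S\times S$ submatrix $K_{SS}$ corresponds to conditioning on the non-$S$ variables, while on covariance coordinates, marginalization gives the principal submatrix $\Sigma_{SS}$, and these two operations disagree. Transferring non-toricness from $L_{\T'}^{-1}$ back to $L_\T^{-1}$ therefore requires more care than substitution. My plan is to work on the concentration side, where restriction is linear: if some linear change of coordinates $F$ on the $\sigma_{ij}$ renders the ideal of $L_\T^{-1}$ toric, I would try to eliminate the coordinates outside the $S$-block from $F(I(L_\T^{-1}))$ and show that what remains is a prime binomial ideal cutting out $L_{\T'}^{-1}$ up to an induced linear change, contradicting stage (ii). The hard technical part will be controlling how $F$ mixes $S$-block with non-$S$-block coordinates through elimination; the block-diagonal structure of the reduced graph Laplacian along cliques of $\G$, combined with Lemma~\ref{lemma:birational} applied to Schur complement as a birational isomorphism, is the leverage I would try to exploit.
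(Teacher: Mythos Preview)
The statement you are attempting to prove is a \emph{conjecture} in the paper, not a theorem: the authors do not claim a proof. Their evidence is exactly your stage~(ii) base cases --- a computer verification, via the Lie-algebra test of~\cite{kahle2024lie}, that every non-vertex-regular BMT-derived graph on four vertices (and all larger ones they tried) fails to be toric under any linear change of coordinates. The direction you call ``forward'' (vertex-regular $\Rightarrow$ toric) is indeed Theorem~\ref{thm:vertex_regular}; the open content is the converse, and the paper offers no argument beyond the finite checks.

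Your stage~(iii) contains a genuine gap, and it is the reason the statement remains a conjecture. The relationship between $L_\T^{-1}$ and $L_{\T'}^{-1}$ that you correctly identify --- restriction $K\mapsto K_{SS}$ on the concentration side, Schur complement on the covariance side --- is a \emph{rational} map, not a linear projection, and toric varieties are not closed under rational images. More concretely, your elimination plan breaks down at the point where you write ``eliminate the coordinates outside the $S$-block from $F(I(L_\T^{-1}))$'': once an arbitrary $F\in\mathrm{GL}$ has been applied, there is no $S$-block to speak of, and even if one could eliminate some subset of the transformed variables to recover a prime binomial ideal, there is no mechanism forcing that elimination ideal to cut out $L_{\T'}^{-1}$ up to a linear change. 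The block-diagonal structure of the reduced graph Laplacian you invoke is a property of one particular $F$, not of the hypothetical $F$ that renders $L_\T^{-1}$ toric. In short, non-toricness does not propagate downward along restriction to induced subtrees by any argument currently available, and closing this gap would amount to proving the conjecture.
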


\subsection{Coloring internal nodes} 

  Internal nodes~$i,j\in \Int(T)$ are \emph{adjacent} if~$\pa(i) = j$ or~$\pa(j) = i$.
 We show that merging colors of adjacent internal nodes induces a toric reciprocal variety. 
\begin{example}
In \Cref{fig:merge_internal}, the colored tree has adjacent internal nodes~$6$ and~$7$ of the same color. We see that the BMT-derived graph is also induced by the uncolored tree on the right.

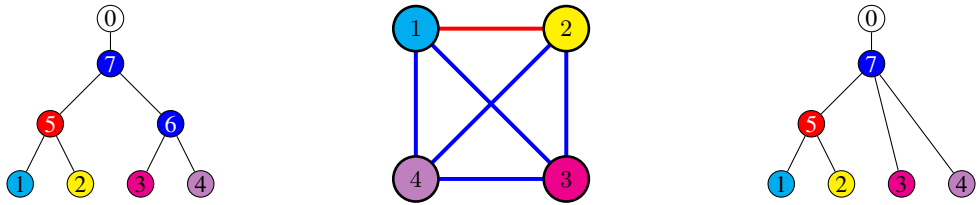
\begin{figure}[h]
    \centering
    \begin{subfigure}[b]{0.3\textwidth}
        \centering
        \begin{tikzpicture}[scale=0.4]
            \tikzset{
                VertexStyle/.style={
                    shape=circle,
                    draw,
                    fill=white,
                    minimum size=10pt,
                    inner sep=0pt,
                    font=\tiny
                },
                EdgeStyle/.style={
                    color=black,
                    line width=0.5pt,
                    draw=none, fill=none, midway
                }
            }
            \Vertex[x=0, y=5.5, label=0, color=gray!0, style={opacity=0}]{n0}
            \Vertex[x=-1, y=0, label=2, color=yellow]{n2}
            \Vertex[x=-3, y=0, label=1, color=cyan]{n1}
            \Vertex[x=1, y=0, label=3, color=magenta]{n3}
            \Vertex[x=3, y=0, label=4, color=violet!50]{n4}
            \Vertex[x=-2, y=2, label=\textcolor{white}{5}, color=red]{n5}
            \Vertex[x=2, y=2, label=\textcolor{white}{6}, color=blue]{n6}
            \Vertex[x=0, y=4, label=\textcolor{white}{7}, color=blue]{n7}

            \draw (n0) -- (n7);
            \draw (n7) -- (n6);
            \draw (n6) -- (n4);
            \draw (n7) -- (n5);
            \draw (n5) -- (n2);
            \draw (n5) -- (n1);
            \draw (n6) -- (n3);
        \end{tikzpicture}
    \end{subfigure}
    \begin{subfigure}[b]{0.3\textwidth}
        \centering
        \begin{tikzpicture}
        \Vertex[label=$1$,color=cyan]{A};
      \Vertex[x=2,label=$2$,color=yellow]{B};
      {\Vertex[y=-2,label=$4$,color=violet!50]{D}};
        \Vertex[x=2,y=-2, label=$3$,color=magenta]{C};
        \Edge[color=red](A)(B);
        \Edge[color=blue](A)(D);
        \Edge[color=blue](B)(D);
        \Edge[color=blue](C)(D);
        \Edge[color=blue](A)(C);
        \Edge[color=blue](B)(C);
    \end{tikzpicture}
    \end{subfigure}
    \begin{subfigure}[b]{0.3\textwidth}
        \centering
        \begin{tikzpicture}[scale=0.4]
            \tikzset{
                VertexStyle/.style={
                    shape=circle,
                    draw,
                    fill=white,
                    minimum size=10pt,
                    inner sep=0pt,
                    font=\tiny
                },
                EdgeStyle/.style={
                    color=black,
                    line width=0.5pt,
                    draw=none, fill=none, midway
                }
            }
            \Vertex[x=0, y=5.5, label=0, color=gray!0, style={opacity=0}]{n0}
            \Vertex[x=-1, y=0, label=2, color=yellow]{n2}
            \Vertex[x=-3, y=0, label=1, color=cyan]{n1}
            \Vertex[x=1, y=0, label=3, color=magenta]{n3}
            \Vertex[x=3, y=0, label=4, color=violet!50]{n4}
            \Vertex[x=-2, y=2, label=\textcolor{white}{5}, color=red]{n5}
            \Vertex[x=0, y=4, label=\textcolor{white}{7}, color=blue]{n7}

            \draw (n0) -- (n7);
            \draw (n7) -- (n4);
            \draw (n7) -- (n5);
            \draw (n5) -- (n2);
            \draw (n5) -- (n1);
            \draw (n7) -- (n3);
        \end{tikzpicture}
    \end{subfigure}
    \caption{The tree with adjacent internal nodes 6 and 7 both colored blue induces the same graph as a tree where all internal nodes have distinct colors (uncolored).}
    \label{fig:merge_internal}
\end{figure}

Under the reduced graph Laplacian transformation, the model has toric vanishing ideal:
\begin{align*}
   & \left\langle p_{03}p_{24}\right. - p_{02}p_{34},\ p_{14}p_{23} - p_{13}p_{24},\ p_{04}p_{23} - p_{02}p_{34},\ p_{03}p_{14} - p_{01}p_{34},\\
   &\left. \ p_{02}p_{14} - p_{01}p_{24},\ p_{04}p_{13} - p_{01}p_{34},\ p_{02}p_{13} - p_{01}p_{23} \right\rangle.
\end{align*}
\end{example}

\begin{proposition}
Let~$\T = (T, \Lambda_{\T},\emptyset)$ be a colored tree such that only adjacent internal nodes share colors. Then, the vanishing ideal~$I_{\T}$ is toric under the reduced graph Laplacian transformation.
\end{proposition}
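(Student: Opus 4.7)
The plan is to reduce to the uncolored BMT case by showing that $L_\T$ coincides with the BMT linear space of an uncolored phylogenetic tree $T'$ obtained from $T$ by a sequence of edge contractions, so that \Cref{thm:brownian_toric} applies directly.

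First, I would argue that every nontrivial color class among internal nodes has exactly two elements. If three internal nodes $a,b,c$ shared a color, every pair would have to be adjacent in $T$ by hypothesis. A short case analysis on the rooted parent-child relations rules this out: if $a = \pa(b)$ and $a = \pa(c)$, then $b$ and $c$ are non-adjacent siblings, and any other configuration forces an ancestor-descendant chain in which the extremal pair is separated by a strict intermediate ancestor and hence is not adjacent. Thus each nontrivial color class among internal nodes is a single pair $\{i, \pa(i)\}$.

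Next, I would define $T'$ by simultaneously contracting the edge $(\pa(i), i)$ for every such pair, i.e.\ deleting $i$ and promoting its children to be children of $\pa(i)$. Because the pairs are pairwise disjoint, the contractions are independent and yield a well-defined phylogenetic tree $T'$ in which every internal node carries a distinct color, so $T'$ may be regarded as uncolored. The central verification is that $L_\T = L_{T'}$. For any two non-root leaves $k, \ell$, one checks by cases: if $\lca_\T(k,\ell) \in \{i, \pa(i)\}$ for some contracted pair, then $\lca_{T'}(k,\ell) = \pa(i)$ in $T'$, and the matrix entry $\sigma_{k\ell}$ receives the common color of the pair in both $\T$ and $T'$; if $\lca_\T(k,\ell)$ lies outside every contracted pair, then that node is preserved in $T'$ and $\lca_\T(k,\ell) = \lca_{T'}(k,\ell)$, so the color is unchanged. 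Consequently, the linear constraints defining $L_\T$ and $L_{T'}$ coincide.

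Once the equality $L_\T = L_{T'}$ is established, \Cref{thm:brownian_toric} applied to the uncolored tree $T'$ gives that $I_{T'}$ is toric under the reduced graph Laplacian, and hence so is $I_\T$. The main technical obstacle is the bookkeeping in the $\lca$ argument: one must verify that no pair of leaves $k, \ell$ gets its $\lca$ relocated into a previously distinct color class under contraction, so that the coloring on matrix entries is genuinely preserved. This is exactly where the size-two restriction on color classes from the first step is essential, since it guarantees that each contraction is a local move that only affects $\lca$ values within the contracted pair.
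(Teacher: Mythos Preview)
Your approach is essentially the same as the paper's: both construct an uncolored tree $T'$ from $\T$ by collapsing each color class of internal nodes to a single node, observe that $L_\T = L_{T'}$, and then invoke \Cref{thm:brownian_toric}. The paper's proof is a terse two sentences that simply define $T'$ (one internal node per color, with parent relations inherited via colors) and assert $I_\T = I_{T'}$; you flesh out the $\lca$ verification that the paper omits.

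Your additional first step, showing that every nontrivial color class has exactly two elements, is correct under the literal reading of the hypothesis, but it is not actually needed for the argument. The paper's construction works verbatim for any coloring in which each color class forms a connected subtree of internal nodes, and the contraction/$\lca$ bookkeeping goes through in that generality. So while your claim that the size-two restriction is ``essential'' for the $\lca$ argument is a slight overstatement (connectedness of color classes is what matters), it does no harm here since the stated hypothesis forces size at most two anyway.
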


\begin{proof}

We construct an uncolored tree~$T'$ such that~$I_\T = I_{T'}$, as in \Cref{fig:merge_internal}. Let~$T'$ be a tree on~$n$ non-root leaves, with an internal node corresponding to each distinct color of internal nodes in~$\T$. Given an internal node~$i$ of~$T'$ and any~$j\in \Lv(T')\cup\Int(T')$, we let~$\pa(j) = i$ in~$T'$ if~$\lambda_\T(\pa(j)) = \lambda_\T(i)$. Since~$I_{\T} = I_{T'}$, the stament follows from \Cref{thm:brownian_toric}.
\end{proof}

\begin{example}
In \Cref{fig:non-adj}, the tree has non-adjacent internal nodes 5 and 6 of the same color.

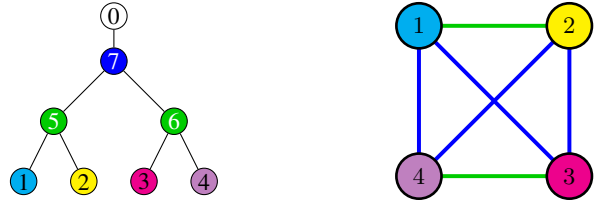
\begin{figure}[h]
    \centering
   \begin{subfigure}[b]{0.3\textwidth}
        \centering
        \begin{tikzpicture}[scale=0.4]
            \tikzset{
                VertexStyle/.style={
                    shape=circle,
                    draw,
                    fill=white,
                    minimum size=10pt,
                    inner sep=0pt,
                    font=\tiny
                },
                EdgeStyle/.style={
                    color=black,
                    line width=0.5pt,
                    draw=none, fill=none, midway
                }
            }
            \Vertex[x=0, y=5.5, label=0, color=gray!0, style={opacity=0}]{n0}
            \Vertex[x=-1, y=0, label=2, color=yellow]{n2}
            \Vertex[x=-3, y=0, label=1, color=cyan]{n1}
            \Vertex[x=1, y=0, label=3, color=magenta]{n3}
            \Vertex[x=3, y=0, label=4, color=violet!50]{n4}
            \Vertex[x=-2, y=2, label=\textcolor{white}{5}, color=darkgreen]{n5}
            \Vertex[x=2, y=2, label=\textcolor{white}{6}, color=darkgreen]{n6}
            \Vertex[x=0, y=4, label=\textcolor{white}{7}, color=blue]{n7}

            \draw (n0) -- (n7);
            \draw (n7) -- (n6);
            \draw (n6) -- (n4);
            \draw (n7) -- (n5);
            \draw (n5) -- (n2);
            \draw (n5) -- (n1);
            \draw (n6) -- (n3);
        \end{tikzpicture}
    \end{subfigure}
    \begin{subfigure}[b]{0.3\textwidth}
        \centering
        \begin{tikzpicture}
        \Vertex[label=$1$,color=cyan]{A};
  \Vertex[x=2,label=$2$,color=yellow]{B};
  {\Vertex[y=-2,label=$4$,color=violet!50]{D}};
    \Vertex[x=2,y=-2, label=$3$,color=magenta]{C};
    \Edge[color=darkgreen](A)(B);
    \Edge[color=blue](A)(D);
    \Edge[color=blue](B)(D);
    \Edge[color=darkgreen](C)(D);
    \Edge[color=blue](A)(C);
    \Edge[color=blue](B)(C);
    \end{tikzpicture}
    \end{subfigure}
\caption{Tree with non-adjacent internal nodes sharing the same color.}
\label{fig:non-adj}
\end{figure}
Under the reduced graph Laplacian transformation, the vanishing ideal of the model is not toric:
\[
\resizebox{0.99\linewidth}{!}{
    $\langle p_{14}p_{23} - p_{13}p_{14}, \ldots, - p_{02} p_{12} p_{34} + p_{03} p_{12} p_{34} + p_{04} p_{12} p_{34} - 2p_{01} p_{23} p_{34} - 2p_{01} p_{24} p_{34}\rangle.$}
\]
Checking with \cite{kahle2024lie}, we find that this ideal is not toric under any other linear change of variables. 
\end{example}
All the examples we computed for merging non-adjacent internal nodes lead to ideals that are not toric under any linear change of variables. This provides evidence for the following conjecture. 

\begin{conjecture} Vanishing ideals of BMT-derived models constructed by merging color classes of edges that correspond to non-adjacent internal nodes are not toric under any invertible linear change of variables. 
\end{conjecture}

\section{Zeroed nodes that imply toric structure}\label{sec:5}
Here, we impose symmetries by {zeroing out parameters for nodes of trees} with no additional colorings imposed on the nodes or leaves. Since zeroing leaves is equivalent to considering a subtree of the original model, we focus on zeroing only internal nodes. By \Cref{prop:graph_from_colored_zeroed}, this corresponds to deleting edges of BMT graphs. 
We show that a tree with zeroed nodes whose BMT derived graph~$\G$ is a block graph has vanishing ideal that is toric under the~$G$-graph derived Laplacian described in (\ref{eqn:derived_laplacian}). 

\begin{example}\label{ex:block_nonblock}
 In Figure \ref{fig:block_deleted}, we consider graphs arising from zeroing nodes in trees.

 \begin{figure}[h]
 \centering
\begin{subfigure}[b]{0.3\textwidth}
    \centering
    \begin{tikzpicture}[scale=0.4]
            \tikzset{
                VertexStyle/.style={
                    shape=circle,
                    draw,
                    fill=white,
                    minimum size=10pt,
                    inner sep=0pt,
                    font=\tiny
                },
                EdgeStyle/.style={
                    color=black,
                    line width=0.5pt,
                    draw=none, fill=none, midway
                }
            }
            \Vertex[x=0, y=5.5, label=0, color=gray!0, style={opacity=0}]{n0}
            \Vertex[x=-1, y=0, label=2, color=yellow]{n2}
            \Vertex[x=-3, y=0, label=1, color=cyan]{n1}
            \Vertex[x=1, y=0, label=3, color=magenta]{n3}
            \Vertex[x=3, y=0, label=4, color=violet!50]{n4}
            \Vertex[x=-2, y=1.33, label=\textcolor{white}{5}, color=red]{n5}
            \Vertex[x=-1, y=2.67, label=\textcolor{white}{6}, color=darkgreen]{n6}
            \Vertex[x=0, y=4, label=\textcolor{white}{7}, color=blue]{n7}

            \draw (n0) -- (n7);
            \draw (n7) -- (n6);
            \draw (n7) -- (n4);
            \draw (n6) -- (n5);
            \draw (n5) -- (n2);
            \draw (n5) -- (n1);
            \draw (n6) -- (n3);
        \end{tikzpicture}
\end{subfigure}
\begin{subfigure}[b]{0.3\textwidth}
    \centering
    \begin{tikzpicture}[scale=0.4]
            \tikzset{
                VertexStyle/.style={
                    shape=circle,
                    draw,
                    fill=white,
                    minimum size=10pt,
                    inner sep=0pt,
                    font=\tiny
                },
                EdgeStyle/.style={
                    color=black,
                    line width=0.5pt,
                    draw=none, fill=none, midway
                }
            }
            \Vertex[x=0, y=5.5, label=0, color=gray!0, style={opacity=0}]{n0}
            \Vertex[x=-1, y=0, label=2, color=yellow]{n2}
            \Vertex[x=-3, y=0, label=1, color=cyan]{n1}
            \Vertex[x=1, y=0, label=3, color=magenta]{n3}
            \Vertex[x=3, y=0, label=4, color=violet!50]{n4}
            \Vertex[x=-2, y=1.33, label=\textcolor{white}{5}, color=red]{n5}
            \Vertex[x=-1, y=2.67, label={6}, color=white]{n6}
            \Vertex[x=0, y=4, label=\textcolor{white}{7}, color=blue]{n7}

            \draw (n0) -- (n7);
            \draw (n7) -- (n6);
            \draw (n7) -- (n4);
            \draw (n6) -- (n5);
            \draw (n5) -- (n2);
            \draw (n5) -- (n1);
            \draw (n6) -- (n3);
        \end{tikzpicture}
\end{subfigure}
\begin{subfigure}[b]{0.3\textwidth}
    \centering
    \begin{tikzpicture}[scale=0.4]
            \tikzset{
                VertexStyle/.style={
                    shape=circle,
                    draw,
                    fill=white,
                    minimum size=10pt,
                    inner sep=0pt,
                    font=\tiny
                },
                EdgeStyle/.style={
                    color=black,
                    line width=0.5pt,
                    draw=none, fill=none, midway
                }
            }
            \Vertex[x=0, y=5.5, label=0, color=gray!0, style={opacity=0}]{n0}
            \Vertex[x=-1, y=0, label=2, color=yellow]{n2}
            \Vertex[x=-3, y=0, label=1, color=cyan]{n1}
            \Vertex[x=1, y=0, label=3, color=magenta]{n3}
            \Vertex[x=3, y=0, label=4, color=violet!50]{n4}
            \Vertex[x=-2, y=1.33, label={5}, color=white]{n5}
            \Vertex[x=-1, y=2.67, label=\textcolor{white}{6}, color=darkgreen]{n6}
            \Vertex[x=0, y=4, label=\textcolor{white}{7}, color=blue]{n7}

            \draw (n0) -- (n7);
            \draw (n7) -- (n6);
            \draw (n7) -- (n4);
            \draw (n6) -- (n5);
            \draw (n5) -- (n2);
            \draw (n5) -- (n1);
            \draw (n6) -- (n3);
        \end{tikzpicture}
\end{subfigure}

\vspace{0.5cm}

\begin{subfigure}{0.3\textwidth}
\centering
 \begin{tikzpicture}
    \Vertex[label=$1$,color=cyan]{A};
  \Vertex[x=2,label=$2$,color=yellow]{B};
{\Vertex[y=-2,label=$4$,color=violet!50]{D}};
    \Vertex[x=2,y=-2, label=$3$,color=magenta]{C};
    \Edge[color=red](A)(B);
    \Edge[color=blue](A)(D);
    \Edge[color=blue](B)(D);
    \Edge[color=blue](C)(D);
    \Edge[color=darkgreen](A)(C);
    \Edge[color=darkgreen](B)(C);
\end{tikzpicture}
\begin{center}
{$\G_1$}

{block, toric}

\end{center}
\end{subfigure}
    \begin{subfigure}{0.3\textwidth}
\centering
\begin{tikzpicture}
    \Vertex[label=$1$,color=cyan]{A};
  \Vertex[x=2,label=$2$,color=yellow]{B};
{\Vertex[y=-2,label=$4$,color=violet!50]{D}};
    \Vertex[x=2,y=-2, label=$3$,color=magenta]{C};
    \Edge[color=red](A)(B);
    \Edge[color=blue](A)(D);
    \Edge[color=blue](B)(D);
    \Edge[color=blue](C)(D);
\end{tikzpicture}
\begin{center}
{$\G_2$}

{block, toric}
\end{center}
\end{subfigure}
\begin{subfigure}{0.3\textwidth}
        \centering
        \begin{tikzpicture}
        \Vertex[label=$1$,color=cyan]{A};
      \Vertex[x=2,label=$2$,color=yellow]{B};
    {\Vertex[y=-2,label=$4$,color=violet!50]{D}};
        \Vertex[x=2,y=-2, label=$3$,color=magenta]{C};
        \Edge[color=blue](A)(D);
        \Edge[color=blue](B)(D);
        \Edge[color=blue](C)(D);
        \Edge[color=darkgreen](A)(C);
        \Edge[color=darkgreen](B)(C);
    \end{tikzpicture}
        \begin{center}
        {$\G_3$}

        {not block, not toric}
        \end{center}
    \end{subfigure}

    \caption{Trees with zeroed nodes {(in white)} and their corresponding BMT-derived graphs
    % {(white nodes denote zeroed nodes)}
    .}
     \label{fig:block_deleted}
\end{figure}
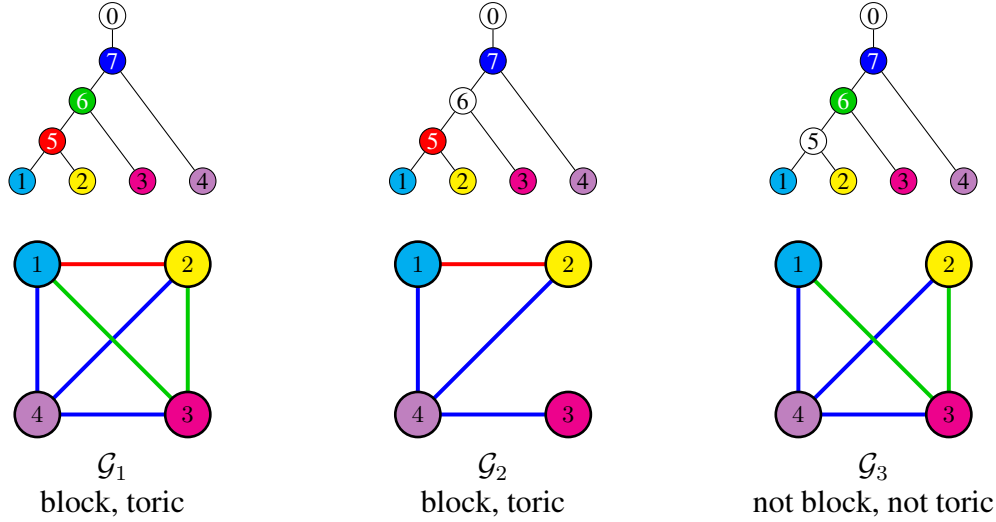

Here are their vanishing ideals, where we take~$\I_{\G_1}$ in coordinates (\ref{eqn:graph_laplacian_BMT}) and~$\I_{\G_2}$ in coordinates (\ref{eqn:derived_laplacian}):
\[
\begin{array}{ccl}
     \I_{\G_1} &= &(p_{03}p_{24}-p_{02}p_{34}, p_{14}p_{23}-p_{13}p_{24},p_{03}p_{14}-p_{01}p_{34}, p_{02}p_{14}-p_{01}p_{24},\\
     & &p_{02}p_{13}-p_{01}p_{23})\\
     \I_{\G_2}&= &(p_{03}p_{24}-p_{02}p_{34},p_{14}p_{23}-p_{13}p_{24},p_{04}p_{23}-p_{24}p_{34},p_{03}p_{14}-p_{01}p_{34},\\
     &&p_{02}p_{14}-p_{01}p_{24},
p_{04}p_{13}-p_{14}p_{34},p_{02}p_{13}-p_{01}p_{23}).
\end{array}
\]
Using code from~\cite{kahle2024lie}, we find that there is no linear change of variables under which~$\I_{\G_3}$ is toric.
\end{example}
\subsection{Combinatorics of trees with zeroed nodes}
For the rest of this paper, we assume the setup of a BMT graph~$\CC$ arising from a tree~$T$ and a BMT-derived block graph~$\G$ constructed from removing edges of same color class from~$\CC$.  

Lemmas \ref{lemma:height_geq_2}, \ref{lemma:leaves_geq_2}, and \ref{lemma:1_leaf} and \Cref{cor:ht top node} give  combinatorial description of tree~$\T$ with~$Z(\T)\neq \emptyset$ with BMT derived block graph, and ultimately help to characterize these BMT derived graphs in \Cref{prop:star_structure}. We need the following notation.   The \emph{height},~$\height(i)$, of an internal node~$i$ of a rooted tree~$T$ is the number of edges in the shortest path from the node to any non-root leaf of~$T$. We say~$j\in \Lv(T)\cup \Int(T)$ is a \emph{descendant} of~$i\in \Lv(T)\cup\Int(T)$ if there is a directed path from~$i$ to~$j$, and we let~$\desc(i)$ denote the set of all internal nodes descendant from~$i$. 

\begin{lemma}\label{lemma:height_geq_2}
 Let~$\G$ be a BMT-derived block graph from tree $\T = (T,\Lambda_{\T}, Z(\T))$. Let~$i \in \Int(T)\setminus Z(\T)$ with~$\height(i)\geq 2$. Then, we have~$j\in \Int(T)\setminus \mathrm{Z}(\T)$ for all internal nodes~$j\in \desc(i)$. 
\end{lemma}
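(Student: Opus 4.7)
The plan is to argue by contradiction, producing four leaves whose pairwise distances in $\G$ violate the four-point characterization of block graphs stated in~(\ref{eqn:four point condition}). Suppose there is an internal node $j \in \desc(i) \cap Z(\T)$. Because $j$ is internal, the phylogenetic-tree convention that each internal node has at least two children yields two leaves $a,b$ descended from $j$ through different subtrees, so that $\lca(a,b) = j \in Z(\T)$. By the BMT-derived construction this means $\{a,b\} \notin E(\G)$, which is the key ``missing edge'' we will exploit.

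To produce the other two leaves I invoke the height hypothesis. Any leaf child of $i$ would force $\height(i) = 1$, so all children of $i$ are internal; since $i$ has at least two children, there is a child $u$ of $i$ distinct from the child $v$ of $i$ that lies on the path to $j$. Being internal, $u$ itself has at least two leaf descendants $c,d$. Because $a,b \in \desc(v)$ while $c,d \in \desc(u)$ with $u \ne v$, we get
\[
\lca(a,c) = \lca(a,d) = \lca(b,c) = \lca(b,d) = i \notin Z(\T),
\]
so the four edges $\{a,c\},\{a,d\},\{b,c\},\{b,d\}$ all lie in $\G$.

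To finish I read off the three four-point sums on $\{a,b,c,d\}$. The path $a - c - b$ together with the absence of the edge $\{a,b\}$ gives $d(a,b) = 2$; the four edges above give $d(a,c)=d(a,d)=d(b,c)=d(b,d)=1$, and the path $c - a - d$ gives $d(c,d) \leq 2$. Therefore
\[
d(a,b) + d(c,d) \;\geq\; 3, \qquad d(a,c) + d(b,d) \;=\; d(a,d) + d(b,c) \;=\; 2,
\]
so the two largest of the three sums cannot coincide and the four-point condition fails, contradicting that $\G$ is a block graph. The main delicacy is ensuring that leaves $c,d$ exist outside $\desc(j)$ and that their LCAs with $a$ and $b$ land on a non-zeroed node; this is exactly the role of $\height(i) \geq 2$, which both rules out leaf children of $i$ and guarantees a sibling subtree of $v$ rich enough to supply two additional leaves whose common ancestor with $a,b$ is the non-zeroed node $i$.
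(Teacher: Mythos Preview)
Your proof is correct and follows essentially the same route as the paper: both arguments select four leaves so that the four ``cross'' edges through the non-zeroed node $i$ are present while one ``within-branch'' edge is missing, and then invoke the four-point condition (\ref{eqn:four point condition}) to derive a contradiction. The only cosmetic difference is that you phrase it as a direct contradiction from a hypothetical zeroed descendant $j$, whereas the paper phrases it positively (the four cross edges force the remaining two edges, hence all $\lca$'s are non-zeroed) and then quantifies over all choices of leaves.
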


\begin{proof}
Since~$\height(i)\geq 2$, there exists at least two branches (non-leaf) descendant from~$i$. Pick distinct leaves~$u,v$ from one branch and~$w,x$ from another branch. Then, vertices~$u,v,w,x$ of~$\G$ are connected by edges $\{u,w\}, \{u,x\}$, $\{v,w\}, \{v,x\}$ of $\G$ because~$i\not\in Z(\T)$. The four-point condition (\ref{eqn:four point condition}) implies that the subgraph on~$u,v,w,x$ is fully connected. Hence, we have $\lca(u,v), \lca(w,x)\in \Int(T)\setminus \mathrm{Z}(\T)$. By considering all choices of~$u,v,w,x$, the result follows. 
\end{proof}
To ensure the BMT-derived graph is connected, we keep the top internal node of~$\T$. If the graph is disconnected, our results hold for each of the disjoint subgraphs.  With this, \Cref{lemma:height_geq_2} implies that the top internal node must have a leaf as descendant.
\begin{corollary} \label{cor:ht top node}
    Let~$\T$ be a colored and zeroed tree with~$\mathrm{Z}(\T)\not=\emptyset$ that induces block graph~$\G$. Then, the top internal node~$i$ of~$T$ satisfies~$\height(i) = 1$.
\end{corollary}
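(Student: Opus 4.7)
The plan is a short proof by contradiction that leverages Lemma \ref{lemma:height_geq_2} almost immediately, once the right observation about the top internal node is in place. As the authors emphasize just before the corollary, we work under the standing assumption that the top internal node $i$ of $T$ is not itself zeroed: if it were, every pair of leaves whose $\lca$ is $i$ would contribute a zero entry, and the BMT-derived graph would split into disjoint pieces (the induced graphs from the subtrees rooted at the children of $i$), contradicting connectedness; by passing to components, we may always reduce to the case $i \in \Int(T)\setminus \rZ(\T)$.

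Next I would argue by contradiction. Suppose $\height(i) \geq 2$. Then the hypotheses of Lemma \ref{lemma:height_geq_2} are satisfied at $i$, and so every internal node $j \in \desc(i)$ lies in $\Int(T)\setminus \rZ(\T)$. Because $i$ is the top internal node of $T$, every internal node of $T$ is either equal to $i$ or a descendant of $i$; combined with $i \notin \rZ(\T)$, this forces $\rZ(\T) = \emptyset$, contradicting the hypothesis $\rZ(\T) \neq \emptyset$.

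Therefore $\height(i) < 2$. Since $i$ is an internal node, at least one edge separates $i$ from any leaf, so $\height(i) \geq 1$; combining the two inequalities yields $\height(i) = 1$, as desired. The main (and only) conceptual step is the observation that the top internal node dominates $\Int(T)$ under the descendant relation, which turns Lemma \ref{lemma:height_geq_2} into a global statement; everything else is formal.
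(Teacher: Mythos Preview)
Your proof is correct and follows essentially the same reasoning the paper sketches in the sentence preceding the corollary: once the top internal node is assumed non-zeroed (for connectedness), Lemma~\ref{lemma:height_geq_2} at that node would force $\rZ(\T)=\emptyset$ if its height were at least~$2$. The paper states this as an immediate consequence without writing out the contradiction explicitly, but your argument is exactly the intended one.
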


\begin{lemma}\label{lemma:leaves_geq_2} 
Let~$\G$ be a BMT-derived block graph from tree~$\T = (T,\Lambda_\T,\mathrm{Z}(\T))$, with~$\mathrm{Z}(\T)\not=\emptyset$. Let~$i\not\in \mathrm{Z}(\T)$ such that there are at least two leaves with parent~$i$. Then, we have~$ \desc(i)\subset \Int(T)\setminus \mathrm{Z}(\T)$. 
\end{lemma}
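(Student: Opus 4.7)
The plan is to argue by contradiction using the four-point condition (\ref{eqn:four point condition}) that characterizes block graphs, just as in the proof of \Cref{lemma:height_geq_2}. The key is to choose four leaves whose pairwise least common ancestors are tightly controlled by the assumption that $i \notin Z(\T)$ has two leaf children.

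\textbf{Choice of witnesses.} Suppose for contradiction that there exists a proper internal descendant $j \in \desc(i)$ with $j \in Z(\T)$. By hypothesis, let $u, v$ be two distinct leaves whose parent is $i$. Since $j$ is an internal node, it has at least two children in $T$, and each of the corresponding subtrees contains at least one leaf; let $w, x$ be leaves chosen one from each such subtree, so that $\lca(w,x) = j$. Because $j$ is a proper descendant of $i$, the leaves $w$ and $x$ sit strictly below $i$ and are not children of $i$, so $\{u,v\} \cap \{w,x\} = \emptyset$.

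\textbf{Computing the LCAs and distances.} Since $u,v$ are children of $i$ and $w,x$ are descendants of $j$ (which is itself a descendant of $i$), one checks that
\[
\lca(u,v) = \lca(u,w) = \lca(u,x) = \lca(v,w) = \lca(v,x) = i.
\]
As $i \notin Z(\T)$, each of the corresponding five pairs is an edge of $\G$. The remaining pair $\{w,x\}$ has $\lca(w,x) = j \in Z(\T)$, so $\{w,x\}$ is \emph{not} an edge of $\G$, giving $d(w,x) = 2$ (via $u$ or $v$), while all other pairwise distances among $u,v,w,x$ equal $1$. Then the three sums of the four-point condition for $(u,v,w,x)$ become $\{3, 2, 2\}$, whose two largest values $3$ and $2$ are unequal. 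This contradicts the assumption that $\G$ is a block graph, so $j \notin Z(\T)$, completing the proof.

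\textbf{Main obstacle.} The content of the argument is routine once the correct quadruple is chosen; the only subtlety is guaranteeing that the four chosen leaves are genuinely distinct and that their pairwise LCAs are exactly as claimed. Both points use essentially that $j$ is a \emph{proper} descendant of $i$ together with $u,v$ being children of $i$, so no descendant of $j$ can coincide with $u$ or $v$, and every $\lca$ involving one of $\{u,v\}$ and one of $\{w,x\}$ is forced to be $i$.
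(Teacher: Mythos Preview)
Your proof is correct and follows essentially the same approach as the paper: both arguments pick the two leaf children of $i$ together with two leaves $w,x$ having $\lca(w,x)=j$, observe that all pairs except $\{w,x\}$ are edges of $\G$ because their lca is $i\notin Z(\T)$, and then invoke the four-point condition to force $\{w,x\}\in E(\G)$. The paper states this directly rather than by contradiction and omits the explicit distance computation, but the content is the same.
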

\begin{proof}
Let~$c,c'$ be two leaves with parent~$i$. For internal node $j\in \desc(i)$, let $w,x$ be leaves satisfying $\lca(w,x) = j$. Since $i\not\in \mathrm{Z}(\T)$, we find \\$\{c,c'\},\{c,w\},\{c,x\},\{c',w\},\{c',x\}$ are edges in~$\G$. The four-point condition (\ref{eqn:four point condition}) on~$c,c',w,x$ implies~$\{w,x\}\in E(\G)$. Hence,~$j\notin \mathrm{Z}(\T)$. \end{proof}

\begin{lemma}\label{lemma:1_leaf}
   Let~$\G$ be a BMT-derived block graph from tree~$\T = (T,\Lambda_\T,\mathrm{Z}(\T))$, with~$\mathrm{Z}(\T)\not=\emptyset$. Let~$i\not\in \mathrm{Z}(\T)$ such that there is exactly one leaf with parent~$i$. Suppose~$j\in \desc(i)$ and~$j\not\in \mathrm{Z}(\T)$. Then, we have~$\desc(j)\subset \Int(T)\setminus \mathrm{Z}(\T)$.
\end{lemma}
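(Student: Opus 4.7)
The plan is to mimic the arguments for Lemmas \ref{lemma:height_geq_2} and \ref{lemma:leaves_geq_2}, applying the four-point condition (\ref{eqn:four point condition}) to a carefully chosen quadruple of leaves in order to force a desired edge of $\G$. Let $c$ denote the unique leaf with parent $i$. Fix an arbitrary $k\in\desc(j)$; to conclude $k\notin Z(\T)$, I will show that $\{w,x\}\in E(\G)$ for any leaves $w,x$ with $\lca(w,x)=k$.

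For the construction, note that since $k$ is a strict descendant of $j$, both $w$ and $x$ lie in the subtree rooted at some child $j_1$ of $j$. Because $j$ is internal, it admits another child $j_2$, and I would take any leaf $u$ in the subtree rooted at $j_2$. Then $\lca(u,w)=\lca(u,x)=j$, so $j\notin Z(\T)$ yields edges $\{u,w\},\{u,x\}\in E(\G)$. Meanwhile, $c$ is a child of $i$ whereas $u,w,x$ are strict descendants of $j$; since $j$ is a strict descendant of $i$ lying in a subtree of $i$ other than $\{c\}$, we have $c\neq u,w,x$ and $\lca(c,u)=\lca(c,w)=\lca(c,x)=i$. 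Using $i\notin Z(\T)$, the edges $\{c,u\},\{c,w\},\{c,x\}$ also lie in $E(\G)$. Consequently $c,u,w,x$ are four distinct vertices with $d(w,x)\leq 2$ via the path $w$--$c$--$x$.

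The final step is to invoke the four-point condition on $\{c,u,w,x\}$: the three relevant sums
\begin{equation*}
d(c,u)+d(w,x)=1+d(w,x), \qquad d(c,w)+d(u,x)=2, \qquad d(c,x)+d(u,w)=2
\end{equation*}
must have their top two equal. If $d(w,x)=2$, these are $3,2,2$, contradicting the block property of $\G$, so $d(w,x)=1$ and $\{w,x\}\in E(\G)$, giving $k\notin Z(\T)$. Since $k\in\desc(j)$ was arbitrary, $\desc(j)\subset\Int(T)\setminus Z(\T)$. The only mildly delicate point is producing four genuinely distinct vertices whose pairwise distances are all $1$ except possibly $d(w,x)$; the leaf child $c$ of $i$ provides a universal neighbor outside $\desc(j)$, while the internality of $j$ automatically supplies the sibling-branch leaf $u$, so no computation beyond invoking the four-point condition is required.
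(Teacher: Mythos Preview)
Your proof is correct and follows essentially the same approach as the paper's: you pick the leaf $c$ with parent $i$, a leaf $u$ in a sibling branch of $j$ (the paper calls it $v$ and characterizes it via $\lca(w,v)=j$), verify that the five edges among $c,u,w,x$ other than $\{w,x\}$ lie in $\G$, and then apply the four-point condition to force $\{w,x\}\in E(\G)$. The only difference is that you spell out the distance arithmetic explicitly, whereas the paper invokes the four-point condition in one line.
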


\begin{proof}
{For any internal node~$k\in \desc(j)$, let~$w,x$ satisfy~$\lca(w,x) = k$. Pick leaf~$v$ such that~$\lca(w,v) = j$. Let~$c$ be the leaf with parent~$i$. Since~$i,j\in \Int(\T)\setminus Z(\T)$, we see that~$\{c,v\}$,~$\{c,w\}$,~$\{c,x\}$,~$\{v,w\}$, and~$\{v,x\}$ are edges in~$\G$. By the four-point condition (\ref{eqn:four point condition}), we have~$\{w,x\}\in E(\G)$. So~$k=\lca(w,x)\in \Int(T)\setminus Z(\T)$. }\end{proof}

 We now use the above lemmas to show that all BMT-derived block graphs are \textit{star graphs}, i.e. union of cliques intersecting at a unique vertex, referred to as the \emph{central} vertex.  

\begin{proposition}\label{prop:star_structure}
   Every BMT-derived block graph is a star graph.
\end{proposition}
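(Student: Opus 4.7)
The plan is to first identify the \emph{central vertex} of the star graph structure, then invoke a general fact about block graphs with a universal vertex.

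First I would dispose of the trivial case $\mathrm{Z}(\T) = \emptyset$: here $\G$ is the complete graph on the non-root leaves of $T$, i.e.\ a single clique, which is vacuously a star graph. Next, assume $\mathrm{Z}(\T) \neq \emptyset$ and work with a single connected component of $\G$ (the disconnected case follows component-wise, as noted earlier in the paper). Let $i$ be the top internal node of $T$; connectivity of $\G$ forces $i \notin \mathrm{Z}(\T)$, because otherwise leaves lying in distinct child-subtrees of $i$ would have zeroed $\lca$ and land in different components. By \Cref{cor:ht top node}, $\height(i) = 1$, so $i$ has at least one leaf child. Applying \Cref{lemma:leaves_geq_2} to $i$: if $i$ had two or more leaf children, then $\desc(i) \subset \Int(T) \setminus \mathrm{Z}(\T)$, and combined with $i \notin \mathrm{Z}(\T)$ this forces $\mathrm{Z}(\T) = \emptyset$, a contradiction. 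Hence $i$ has exactly one leaf child, call it $c$.

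The candidate central vertex is $c$. I would show $c$ is universal in $\G$: for any other leaf $\ell$, the only internal ancestor of $c$ is $i$, so $\lca(c, \ell) = i \notin \mathrm{Z}(\T)$ and therefore $\{c, \ell\} \in E(\G)$.

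It then remains to establish the purely graph-theoretic fact that a connected block graph with a universal vertex $c$ is a star graph with center $c$. The argument is: suppose some block $B$ of $\G$ did not contain $c$, and pick two vertices $u, v \in B$. Since $c$ is adjacent to both $u$ and $v$, the triangle $\{c,u,v\}$ is a 2-connected subgraph of $\G$, hence lies inside a single block $B^{*}$. But then $B \cap B^{*} \supseteq \{u,v\}$, and in a block graph two distinct blocks can share at most one vertex (otherwise the shared edge would lie in both blocks), forcing $B^{*} = B$ and contradicting $c \notin B$. Therefore every block of $\G$ contains $c$, and $\G$ is the union of cliques all meeting at $c$, i.e.\ a star graph with center $c$.

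The main obstacle is the universal-vertex-in-block-graph lemma, but it is a short cycle argument using the defining property that distinct blocks of a block graph share at most one vertex; all other steps are immediate consequences of the combinatorial lemmas already proved in this section.
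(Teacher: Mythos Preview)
Your proof is correct and follows essentially the same route as the paper's: identify the unique leaf child $c$ of the top internal node via \Cref{cor:ht top node} and \Cref{lemma:leaves_geq_2}, observe that $c$ is universal since $\lca(c,v)=i\notin\mathrm{Z}(\T)$ for every other leaf $v$, and conclude star-graph structure. The paper simply asserts the last implication (``This happens only when the block graph is a star graph''), whereas you supply the short biconnected-component argument explicitly; your version is the more complete one. You also correctly omit \Cref{lemma:1_leaf}, which the paper cites but does not actually need here.
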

\begin{proof}
{Let~$\G$ be a non-complete BMT-derived block graph from tree~$\T$. By Lemmas \ref{lemma:height_geq_2}, \ref{lemma:leaves_geq_2}, and \ref{lemma:1_leaf}, the top internal node~$i$ of~$\T$ satisfies~$\height(i) = 1$ with exactly one leaf~$c$ having parent~$i$. For any other leaf~$v$ of~$\T$, we have~$\lca(c,v) \not\in \mathrm{Z}(\T)$. Hence, every vertex of~$\G$ is connected to vertex~$c$. This happens only when the block graph is a star graph. }
\end{proof}

\begin{remark} The correspondence between trees with colored and zeroed nodes and their BMT derived graphs is not one to one.  For example, the second graph in Figure \ref{fig:block_deleted} is also induced by the tree with leaves 1, 2, and 3 sharing the same least common ancestor and leaf 4 such that it is the only leaf with parent given by the top internal node.
\end{remark}

\subsection{\texorpdfstring{$G$}{G}-derived Laplacian transformations} \label{sec:g-derived-laplacian}
Recall that for a graph~$\Gamma$ on~$n$ vertices with weight $w_{ij}$ on edge~$\{i,j\}$, its Laplacian matrix is the~$n\times n$ square matrix where the off diagonal entry~$(i,j)$ is~$0$  when~$\{i,j\}\notin E(\Gamma)$, is~$-w_{ij}$ for~$(i,j)\in E(\Gamma)$, and its~$i$-th diagonal entry is~$\sum_{j:\{i,j\}\in E(\Gamma)}w_{ij}$.  

Consider graph~$G$ on vertex set~$[n]$ and edge set~$E(G)$. Construct the weighted complete graph~$\Gamma(G)$ on~$n+1$ vertices, with the extra node labeled~$0$ and with weights in the edges as follows:
\begin{enumerate}
    \item edge~$\{i,j\}$ such that~$i,j\neq 0$ and~$\{i,j\}\in E(G)$ has weight~$q_{ij}$,
    \item edge~$\{i,j\}$ such that~$i,j\neq 0$ and ~$\{i,j\}\not\in E(G)$  has weight~$-q_{ij}$,
    \item edge~$\{0,i\}$ has weight~$q_{0i} - \sum\limits_{j : \deg_G(j) = n-1} q_{0j}$ when~$\deg_G(i) < n-1$,
    \item edge~$\{0,i\}$ has weight~$q_{0i} - \sum\limits_{j:\deg_G(j)<n-1} q_{ij}$ when~$\deg_G(i) = n-1$.
\end{enumerate}
For an illustration see \Cref{fig:graph derived laplacian}. 
The \emph{$G$-derived Laplacian transformation} is the invertible linear transformation from the~$\sigma_{ij}$ variables to the~$q_{ij}$'s for~$0\leq i<j\leq n$ obtained by removing the first row and first column from the graph Laplacian matrix of~$\Gamma(G)$. 

When~$G$ is a complete graph, the~$G$-derived Laplacian transformation coincides with  the reduced graph Laplacian transformation in (\ref{eqn:graph_laplacian_BMT}). When~$G$ is a star block graph  with central node~$c$, this linear transformation is:
 \begin{align}\label{eqn:derived_laplacian}
 \begin{aligned}
        q_{ij} &= \begin{cases}
            -\sigma_{ij} & \textrm{if }\{i,j\}\in E(G),\\
            +\sigma_{ij} & \textrm{if }\{i,j\}\not\in E(G),
        \end{cases} \qquad 
        q_{0i} &=\begin{cases}
            +\sigma_{ii} & \textrm{if }i=c,\\
            \sum\limits_{j:j\not=c} \sigma_{ij} & \textrm{else}.
        \end{cases}
 \end{aligned}
    \end{align}

\begin{example}
Consider graph~$G$ and its induced weighted complete graph~$\Gamma(G)$ in \Cref{fig:graph derived laplacian}. 
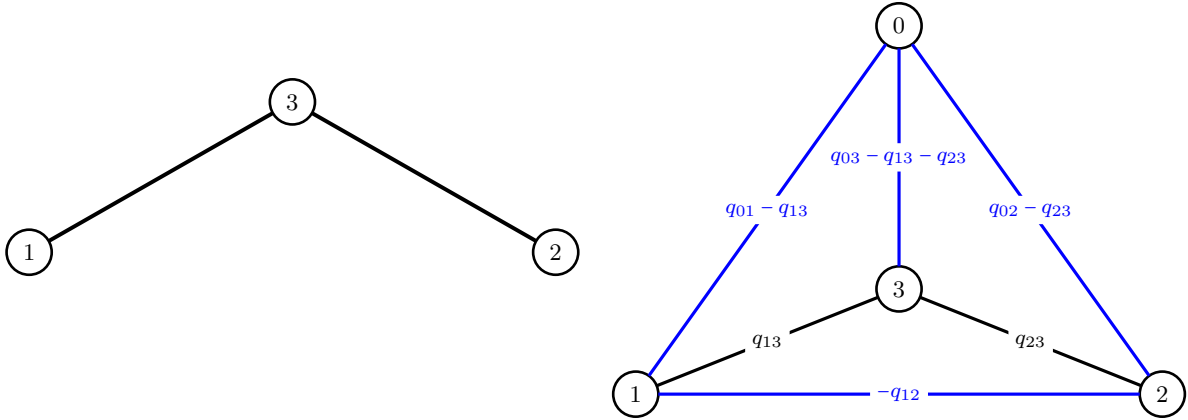
\begin{figure}[h]
    \centering
\begin{subfigure}{0.45\textwidth}
    \centering
     \begin{tikzpicture}%[scale=.7]
    \Vertex[x=0, label=$1$,color=white]{A}
    \Vertex[x=3, y=2,label=$3$,color=white]{B}
    \Vertex[x=6, label=$2$,color=white]{C}
    \Edge[color=black](A)(B)
    \Edge[color=black](B)(C)

    \node[draw, circle, white] (empty) at (0, -2) {};
\end{tikzpicture}
\end{subfigure}
\hspace{1em}
\begin{subfigure}{0.45\textwidth}
\centering
  \begin{tikzpicture}
  [scale=.7,very thick]
    \Vertex[x=0,y=0, label=$1$,color=white]{A}
    \Vertex[x=4, y=1.2,label=$3$,color=white]{B} 
    \Vertex[x=8, y=0, label=$2$,color=white]{C}
    \Vertex[x=4,y=6,label=$0$,color=white]{D}
    \draw[color=blue] (A) -- (D)
        node[midway, draw=white, rectangle, fill=white, text=blue, inner sep=2pt, font=\tiny] {$q_{01}-q_{13}$};
    \draw[color=blue] (B) -- (D)
        node[midway, draw=white, rectangle, fill=white, text=blue, inner sep=2pt, font=\tiny] {$q_{03}-q_{13}-q_{23}$};
    \draw[color=blue] (C) -- (D)
        node[midway, draw=white, rectangle, fill=white, text=blue, inner sep=2pt, font=\tiny] {$q_{02}-q_{23}$};
    \draw[color=black] (A) -- (B)
        node[midway, draw=white, rectangle, fill=white, text=black, inner sep=2pt, font=\tiny] {$q_{13}$};
    \draw[color=black] (B) -- (C)
        node[midway, draw=white, rectangle, fill=white, text=black, inner sep=2pt, font=\tiny] {$q_{23}$};
    \draw[color=blue] (A) -- (C)
        node[midway, draw=white, rectangle, fill=white, text=blue, inner sep=2pt, font=\tiny] {$-q_{12}$};
\end{tikzpicture}
\end{subfigure}
    \caption{A graph~$G$ and its induced weighted complete graph~$\Gamma(G)$.}
    \label{fig:graph derived laplacian}
\end{figure}

The graph Laplacian of~$\Gamma(G)$ is 
\begin{align*}
    \begin{bmatrix}
       q_{01}+q_{02}+q_{03} - 2(q_{13}+q_{23}) & -q_{01}+q_{13} & -q_{02}+q_{23} & -q_{03}+q_{13}+q_{23}\\
        -q_{01}+q_{13} & q_{01}-q_{12} & q_{12} & -q_{13}\\
        -q_{02}+q_{23} &  q_{12} & q_{02}-q_{12} & -q_{23}\\
        -q_{03}+q_{13}+q_{23} & -q_{13} &-q_{23} & q_{03}
    \end{bmatrix}.
\end{align*}
By removing the first row and the first column we obtain the~$G$-derived Laplacian transformation.
\end{example}

Although in this paper we only focus on applying the~$G$-derived Laplacian to obtain toric ideals,  combinatorial properties and other applications of this transformation would be interesting to study.

\subsection{BMT-derived block graphs are toric}

Before we state the main result of the section, note that the \emph{generalized path map}  (\ref{eqn:generalized path map}) on the colored tree~$\T$ with zeroed nodes does not {parameterize the variety $L_{\mathcal{T}}$} when~$Z(\T)\neq \emptyset$. So we make the following adjustment. Let~$c$ be the unique non-root leaf connected to the top internal node. Consider~$\varphi'_\T$ on~$\T$ such that  
\begin{equation} \label{eqn:generalized path map block}
    \begin{array}{cccc}
         \varphi'_\T: &\C[q_{ij} \mid 0\leq i<j\leq n] &\to &\C[\theta_{k} \mid k\in \Lv(T)\cup \Int(T)] \\
         & q_{ij} & \mapsto & \begin{cases}
             \theta_{{c}}^2 & \text{if } (i,j) = (0,c)\\
             \prod\limits_{\substack{(\ell, k)\in i \leftrightsquigarrow j\\ k\notin Z(\T)}} \theta_{k} & \text{else}.
         \end{cases}
    \end{array}
\end{equation}

We also consider embeddings~$\Tilde{\I}_G$ and~$\Tilde{I}_T$ of ideals~$\I_G$ and~$I_T$, respectively, in the polynomial ring $\C[Q]\coloneqq \C[q_{ij}\mid 0\leq i<j\leq n]$ as follows.  Let~$\Tilde{\I}_G$ be the ideal in~$\C[Q]$ obtained by substitution of variables in the ideal~$\I_G\subset \C[\Sigma]$ in \Cref{thm:misra_sullivant}:~$ \sigma_{ii}=q_{0i} \text{ and }  \sigma_{ij}=q_{ij} \text{ for  } 1\leq i<j\leq n.$ 
Similarly, let~$\Tilde{I}_T$ be the ideal in~$\C[Q]$ obtained by substitution of variables in the ideal~$I_T\subset \C[p_{ij} \mid 0\leq i<j\leq n]$ in \Cref{thm:brownian_toric}:~$p_{0i}=q_{0i} \text{ and }  p_{ij}=q_{ij} \text{ for  } 1\leq i<j\leq n.$ Now we are ready to state the main result of the section.

\begin{theorem}\label{thm:deletions}
Let~$\G =(G,\Lambda_{\G})$ be a BMT-derived block graph from~$\T = (T,\Lambda_\T,\mathrm{Z}(\T))$ with~$\mathrm{Z}(\T)\not=\emptyset$ {such that all nodes of~$\T$ have distinct colors}. Then, the vanishing ideal of~$L_\T^{-1}=\L_\G^{-1}$ under the~$G$-derived Laplacian transformation is the toric ideal 
$\sqrt{\Tilde{I}_T+ \Tilde{\I}_{G}}=\ker \varphi'_\T$.  
\end{theorem}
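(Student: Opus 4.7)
The plan is to mirror the strategy used to prove \Cref{thm:vertex_regular}: write $L_\T$ as an intersection of two linear spaces whose reciprocal varieties are both toric under a common linear change of coordinates, and then apply \Cref{thm:ACTUAL_INTERSECTION}. First I would observe that $L_\T = L_T \cap \L_G$: the tree-derived equalities $M_{ij}=t_{\lca(i,j)}$ coming from $L_T$ together with the zero pattern $M_{ij}=0$ for $\{i,j\}\notin E(G)$ coming from $\L_G$ cut out exactly the defining equations of $L_\T$, by the construction of the BMT-derived graph and the hypothesis that all nodes of $\T$ have distinct colors. Since $L_\T$ is a linear space it is irreducible, and because matrix inversion is birational, $L_\T^{-1}$ is also irreducible.

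Second, and most importantly, I would verify that under the $G$-derived Laplacian both $L_T^{-1}$ and $\L_G^{-1}$ have toric vanishing ideals, equal to $\Tilde{I}_T$ and $\Tilde{\I}_G$ respectively. For $\L_G^{-1}$: by \Cref{prop:star_structure}, $G$ is a star block graph with some central vertex $c$, so \Cref{thm:misra_sullivant} presents $\I_G$ via the $2\times 2$ minors of $\Sigma_{A\cup \{c\}, B\cup \{c\}}$; applying the substitution rules in (\ref{eqn:derived_laplacian}) converts each such minor to a binomial of the form $q_{0c}q_{ij}-q_{ic}q_{jc}$ up to a sign produced by the edge indicators, matching the corresponding generator of $\Tilde{\I}_G$. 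For $L_T^{-1}$: the cherry binomials $p_{ik}p_{j\ell}-p_{i\ell}p_{jk}$ of \Cref{thm:brownian_toric} are first pushed back to $\sigma$-coordinates via the reduced graph Laplacian and then forward via the $G$-derived Laplacian; for generators whose indices $i,j,k,\ell$ are all non-root, the four entries $\sigma_{ik},\sigma_{i\ell},\sigma_{jk},\sigma_{j\ell}$ share a common LCA in $T$, so the signs $\epsilon_{\cdot\cdot}$ in $\sigma_{ab}=\epsilon_{ab}q_{ab}$ agree across them and the generator becomes $\pm(q_{ik}q_{j\ell}-q_{i\ell}q_{jk})$, i.e.\ the matching generator of $\Tilde{I}_T$.

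Third, since $L_\T = L_T\cap\L_G$ is irreducible and both reciprocal ideals are toric under the same linear change of coordinates, \Cref{thm:ACTUAL_INTERSECTION} applied to the inverse-matrix map (\ref{eqn:general inverse_map}) yields that $L_\T^{-1}=L_T^{-1}\cap\L_G^{-1}$ is a toric variety with vanishing ideal $\sqrt{\Tilde{I}_T+\Tilde{\I}_G}$ under the $G$-derived Laplacian. Finally, I would establish $\sqrt{\Tilde{I}_T+\Tilde{\I}_G}=\ker\varphi'_\T$ as in the closing paragraph of the proof of \Cref{thm:vertex_regular}: the forward inclusion is a direct check on binomial generators via path/LCA matching in $\T$, with $\varphi'_\T(q_{0c})=\theta_c^2$ absorbing the extra contribution of the edge from $c$ to the top internal node; equality then follows from a dimension count, since both ideals are prime of dimension $|\Lv(T)|+|\Int(T)\setminus Z(\T)|$, which equals the number of $\theta$-parameters appearing in (\ref{eqn:generalized path map block}).

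The hard part will be the root-indexed case in the second step: the linear maps $p_{0i}=\sum_{j=1}^n\sigma_{ij}$ (reduced graph Laplacian) and $q_{0i}=\sum_{j\neq c}\sigma_{ij}$ ($G$-derived Laplacian) differ, so the $p$-binomials with a root index do not translate into cherry binomials in the $q$'s by a naive substitution. The reconciliation requires working modulo $\Tilde{\I}_G$ to absorb the missing $\sigma_{ic}$ summands in terms of $q_{0c}$ and the $q_{ic}$, which is precisely why the \emph{sum} $\Tilde{I}_T+\Tilde{\I}_G$, rather than either ideal individually, is what cuts out $L_\T^{-1}$; keeping track of signs and ensuring that no new monomial relations appear beyond the expected cherry binomials is where the star block structure from \Cref{prop:star_structure} is indispensable.
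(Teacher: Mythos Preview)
Your overall architecture matches the paper's: decompose $L_\T=L_T\cap\L_G$, show both reciprocal ideals are toric under the $G$-derived Laplacian $\psi$, apply \Cref{thm:ACTUAL_INTERSECTION}, and finish with the containment-plus-dimension argument for $\ker\varphi'_\T$. Your treatment of the non-root cherry binomials of $I_T$ is in fact cleaner than the paper's subgraph case analysis: since the four pairs $\{i,k\},\{i,\ell\},\{j,k\},\{j,\ell\}$ share a common $\lca$, their edge/non-edge status in $G$ agrees and the signs introduced by $\psi$ cancel.

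The gap is exactly where you flag it, but your proposed fix does not work. \Cref{thm:ACTUAL_INTERSECTION} requires that $\psi(I(L_T^{-1}))$ and $\psi(I(\L_G^{-1}))$ be \emph{each individually} toric; ``working modulo $\tilde{\I}_G$'' would at best show the sum is binomial, which is not the hypothesis you need. Worse, the residual term you must absorb, namely $\pm(q_{ik}q_{cj}-q_{ci}q_{jk})$, is \emph{not} in $\tilde{\I}_G$ in general: when $i,j,k$ lie in a single clique of the star graph $G$ (which happens whenever $\lca(i,k)\notin Z(\T)$), no $1$-clique partition separates $\{i,j\}$ from $k$, so this minor does not appear in $\I_G$. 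The paper's resolution goes in the opposite direction: the residual term is already a cherry binomial of $\tilde{I}_T$ itself. Because $c$ is the unique leaf adjacent to the top internal node (\Cref{prop:star_structure}), the induced $4$-leaf subtree on $i,j,k,c$ has cherries $\{i,j\},\{c,k\}$, hence $q_{ik}q_{cj}-q_{ci}q_{jk}\in\tilde{I}_T$. This yields $\psi(I(L_T^{-1}))=\tilde{I}_T$ outright, and \Cref{thm:ACTUAL_INTERSECTION} then applies directly.

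A secondary issue: your verification for $\I_G$ only covers the generators involving $\sigma_{cc}$. The purely off-diagonal minors $\sigma_{ik}\sigma_{j\ell}-\sigma_{i\ell}\sigma_{jk}$ with $i,j\in A$, $k,\ell\in B$ and none equal to $c$ also need to be checked; here the four pairs do \emph{not} share a common $\lca$, so the sign-cancellation is not automatic and requires the short case analysis the paper carries out.
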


\begin{proof}
We have~$\L_{\G} = L_T\cap \L_{G}$, so by \Cref{thm:ACTUAL_INTERSECTION},  it suffices to show that the vanishing ideals of~$\L_G^{-1}$ and of~$L_T^{-1}$ are binomial under the~$G$-derived Laplacian.  Let~$\psi$ represent the~$G$-derived Laplacian transformation.

Ideal~$\I_{G}$ in \Cref{thm:misra_sullivant} is the vanishing ideal of~$\L_G^{-1}$.   
By \Cref{prop:star_structure},~$G$ has a central vertex, say~$c$. As such, by Theorem \ref{thm:misra_sullivant},~$\I_{G}$ is generated by binomials~$\sigma_{ik}\sigma_{j\ell} -   \sigma_{i\ell}\sigma_{jk}$ for distinct vertices~$i,j\in A\cup \{c\},k,\ell\in B\cup \{c\}$,  and~$\sigma_{cc}\sigma_{j\ell} -   \sigma_{c\ell}\sigma_{jc}$ for~$j \in A$,~$\ell \in B$ for all possible 1-clique partitions~$(A,B,\{c\})$ of~$G$.
\Cref{fig:subgraph_structures} shows the possible  subgraphs up to relabeling, when~$i,j,k,\ell$ are distinct.
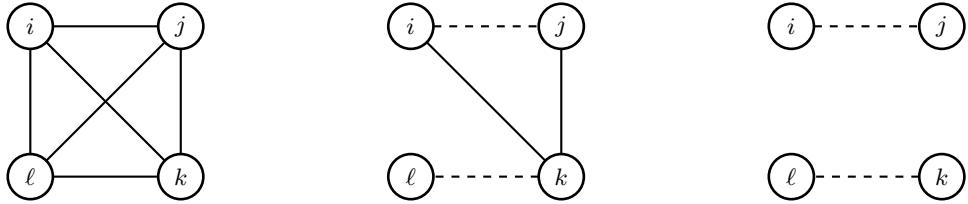
\begin{figure}[h]
\centering
    \begin{minipage}{0.3\textwidth}
    \centering
         \begin{tikzpicture}
    \Vertex[label=$i$,color=white]{i}
    \Vertex[x=2,label=$j$,color=white]{j}
    \Vertex[x=2,y=-2, label=$k$,color=white]{k}
    \Vertex[y=-2,label=$\ell$,color=white]{l}
    \draw[color=black,thick](i)--(j);
    \draw[color=black,thick](i)--(k);
    \draw[color=black,thick](i)--(l);
    \draw[color=black,thick](j)--(k);
    \draw[color=black,thick](j)--(l);
    \draw[color=black,thick](k) --(l);
\end{tikzpicture}
    \end{minipage}
    \begin{minipage}{0.3\textwidth}
    \centering
             \begin{tikzpicture}
    \Vertex[label=$i$,color=white]{i}
    \Vertex[x=2,label=$j$,color=white]{j}
    \Vertex[x=2,y=-2, label=$k$,color=white]{k}
    \Vertex[y=-2,label=$\ell$,color=white]{l}
    \draw[color=black,thick,dashed](i)--(j);
    \draw[color=black,thick](i)--(k);
    \draw[color=black,thick](j)--(k);
    \draw[color=black,thick,dashed](k) --(l);
\end{tikzpicture}
    \end{minipage}
\begin{minipage}{0.3\textwidth}
\centering
     \begin{tikzpicture}
    \Vertex[label=$i$,color=white]{i}
    \Vertex[x=2,label=$j$,color=white]{j}
    \Vertex[x=2,y=-2, label=$k$,color=white]{k}
    \Vertex[y=-2,label=$\ell$,color=white]{l}
    \draw[color=black,thick,dashed](i)--(j);
    \draw[color=black,thick,dashed](k) --(l);
\end{tikzpicture}
\end{minipage}
    \caption{Possible block subgraphs on 4 distinct vertices, dashed lines represent edges whose presence does not change the generating binomial.}
    \label{fig:subgraph_structures}
\end{figure}

First, assume without loss of generality that~$i=k=c$, where~$c$ is the central vertex. Note that~$\psi(\sigma_{cc}) = q_{0c}$. Then,~$\{i,\ell\}\{j,k\}\in E(G)$. Since~$j\in A,\ell\in B$, we have~$\{j,\ell\}\not\in E(G)$. As such,~$$\psi(\sigma_{ik}\sigma_{j\ell}-\sigma_{i\ell}\sigma_{jk}) = q_{0c}q_{j\ell} - (-q_{c\ell})(-q_{cj}) = q_{0c}q_{j\ell} - q_{c\ell}q_{cj}.$$

Otherwise,~$i,j,k,\ell$ are all distinct. Due to the partition structure,~$i,j,k,\ell$ cannot form a complete subgraph. As such, up to relabeling, we have the following two cases, visualized in \Cref{fig:subgraph_structures}(b,c):
\begin{enumerate}
    \item $k$ is connected to~$i$ and~$j$, with~$\ell$ disconnected from~$i$ and~$j$. Then, $\{i,k\},\{j,k\}\in E(G)$ and $\{j,\ell\},\{i,\ell\}\not\in E(G)$, so~$\psi(\sigma_{ik}\sigma_{j\ell} - \sigma_{i\ell}\sigma_{jk}) = -q_{ik}q_{j\ell} + q_{i\ell}q_{jk}$.
    \item $\{i,k\},\{j,\ell\},\{i,\ell\},\{j,k\}\not\in E(G)$. Then,~$\psi(\sigma_{ik}\sigma_{j\ell} - \sigma_{i\ell}\sigma_{jk}) = q_{ik}q_{j\ell} - q_{i\ell}q_{jk}$.
\end{enumerate}

This shows that the image of~$\I_G$ under~$\psi$ is a binomial ideal and this image is precisely~$\Tilde{\I}_G$.

Next, let~$I_T$ be the vanishing ideal of~$L_T$ under the graph Laplacian transformation. By \Cref{thm:brownian_toric}, the generators of~${I}_T$ are given by~$p_{ik}p_{j\ell} - p_{i\ell}p_{jk}$ for cherries~$\{i,j\},\{k,\ell\}$ of~$T$ in the induced  4-leaf sub-tree on~$i,j,k,\ell$.  We consider the generating set of the vanishing ideal for~$L_T^{-1}$ obtained by the inverse reduced Laplacian on these binomial generators. 
When~$i,j,k,\ell\not=0$, this binomial expressed in the~$\sigma_{ij}$'s is
\(
 \sigma_{ik}\sigma_{j\ell}-\sigma_{i\ell}\sigma_{jk}.\)
Recall that~$\psi(\sigma_{ik}) = -q_{ik}$ if~$\{i,j\}\in E(G)$, and~$\psi(\sigma_{ik}) = q_{ik}$ otherwise. The same holds for any other pair of non-root leaves. Examining the possible subgraphs (see \Cref{fig:subgraph_structures}), all cases give \(\psi(\sigma_{ik}\sigma_{j\ell} - \sigma_{i\ell}\sigma_{jk}) = \pm (q_{ik}q_{j\ell} - q_{i\ell}q_{jk}).\)

Without loss of generality  assume~$\ell=0$ is the root leaf. Then, the generator~$p_{ik}p_{0j} - p_{0i} p_{jk}$ expressed in the~$\sigma_{ij}$'s is
    \begin{align*}
        \sigma_{ik}\left(\sum_{s=1}^n \sigma_{js} \right) - \left(\sum_{s=1}^n \sigma_{is} \right)\sigma_{jk}.
    \end{align*}

Note that~$i,j$ cannot be equal to~$c$. As such, we always have~$q_{0i} = \sum\limits_{s\not=c} \sigma_{si}$ and~$q_{0j} = \sum\limits_{s\not=c} \sigma_{sj}$, and so
    \begin{align*}%\label{eqn:gen_exp}
    \begin{aligned}
       &\psi\left(\sigma_{ik}\left(\sum_{s=1}^n \sigma_{js} \right) - \left(\sum_{s=1}^n \sigma_{is} \right)\sigma_{jk}\right) =
       \\&= \psi\left(\sigma_{ik}\right)\big(q_{0j}+ \psi(\sigma_{cj})\big) - \big(q_{0i} + \psi(\sigma_{ci})\big)\psi(\sigma_{jk})=
       \\
       &=\Big( \psi(\sigma_{ik})q_{0j} - q_{0i}\psi(\sigma_{jk}) \Big) + \Big(\psi(\sigma_{ik})\psi(\sigma_{cj}) - \psi(\sigma_{ci}) \psi(\sigma_{jk})\Big).\end{aligned}
    \end{align*}
The subtree on leaves $i,j,k,c$ will have cherries~$\{i,j\},\{c,k\}$ which gives $\psi(\sigma_{ik}) \psi(\sigma_{cj}) - \psi(\sigma_{ci})\psi(\sigma_{jk}) = q_{ik}q_{cj}-q_{ci}q_{jk}$, 
shown earlier to be in the image of the vanishing ideal of~$L_T^{-1}$ under~$\psi$. As such, in the image,  we can replace the generator~$\psi\left(\sigma_{ik}\left(\sum_{s=1}^n \sigma_{js} \right) - \left(\sum_{s=1}^n \sigma_{is} \right)\sigma_{jk}\right)$ with the binomial~$ \psi(\sigma_{ik})q_{0j} - q_{0i}\psi(\sigma_{jk})=\pm (q_{ik}q_{0j} - q_{0i}q_{jk})$. 
Note that the binomial generators are generators for the toric ideal~$\Tilde{I}_T$, which makes the latter the vanishing ideal of~$L_T$ under~$\psi$. \Cref{thm:ACTUAL_INTERSECTION} concludes that~$ \sqrt{\Tilde{I}_T + \Tilde{\I}_G}$ the toric vanishing ideal of~$L_\T=\L_\G$ under the G-derived Laplacian.

Finally, observe that any generator of~$\Tilde{I}_T + \Tilde{\I}_{G}$ maps to zero under~$\varphi'_\T$.  Indeed, we would only need to check generators that include~$q_{0c}$. Given that~$c$ corresponds to the only non-root leaf, the subtree on~$i,j,0,c$ will always have cherry structure~$\{i,j\},\{0,c\}$. As such, there is no generator containing~$q_{0c}$ in~$\Tilde{I}_T$. Regarding~$\Tilde{\I}_\G$, generators containing~$p_{0c}$ are of the form~$q_{0c}q_{ij}-q_{ic}q_{jc}$ for~$i,j\neq 0,c$. With the adaptation we made to the generalized path map, this is mapped to zero by~$\varphi'_\T$.
So,~$I_T + \I_{G}\subseteq \ker \varphi'_\T$. Taking radicals both sides one has~$\sqrt{I_T + \I_{G}}\subseteq \ker \varphi'_\T$. Now~$\dim(\ker \varphi'_\T)=\dim\left(\sqrt{I_T + \I_{\G}}\right)=|V(T)|-|Z(\T)|$. Together with the fact that a prime ideal cannot strictly contain other prime ideals of the same dimension imply that~$\sqrt{I_T + \I_{G}}= \ker \varphi'_\T$.
    \end{proof}

\begin{remark}
To our surprise,  non-block colored graphs are sometimes toric. For example, the graph in \Cref{fig:non_block_toric}, which is not toric under the~$G$-derived Laplacian, is toric under the change of variables given in (\ref{eq:nonblocktoric transform}). We found this transformation using the code from \cite{kahle2024lie}. This is perhaps unexpected, since no uncolored non-block graphs  on 4 and 5 vertices has toric structure \cite{kahle2024lie}.

\begin{align}\label{eq:nonblocktoric transform}
  \begin{pmatrix}
        p_{11}\\
        p_{12}\\
        p_{13}\\
        p_{14}\\
        p_{22}\\
        p_{23}\\
        p_{24}\\
        p_{33}\\
        p_{34}\\
        p_{44}
    \end{pmatrix} &= \begin{pmatrix}
0 & 1 & 0 & 0 & 0 & 1 & 0 & 1 & 0 & 0 \\
0 & 0 & 1 & 0 & 0 & 0 & 1 & 0 & 1 & 0 \\
1 & 0 & 0 & 1 & 0 & -\frac{12}{5} & -\frac{12}{5} & -3 & -3 & 0 \\
\frac{1}{2} & 0 & 0 & -1 & 0 & -\frac{6}{5} & -\frac{6}{5} & 3 & 3 & 0 \\
0 & 0 & 0 & 0 & 0 & 0 & 0 & 0 & 0 & 1 \\
0 & 0 & 0 & 0 & 0 & \frac{12}{5} & 0 & 3 & 0 & 0 \\
0 & 0 & 0 & 0 & 0 & \frac{6}{5} & 0 & -3 & 0 & 0 \\
0 & 0 & 0 & 0 & 0 & 0 & \frac{12}{5} & 0 & 3 & 0 \\
0 & 0 & 0 & 0 & 0 & 0 & \frac{6}{5} & 0 & -3 & 0 \\
\frac{6}{17} & 1 & -1  & 0 & 1 & -\frac{37}{20}  & -\frac{37}{25}  & -2  & 2  & -2  \\
\end{pmatrix}
    \begin{pmatrix}
        \sigma_{11}\\
        \sigma_{12}\\
        \sigma_{13}\\
        \sigma_{14}\\
        \sigma_{22}\\
        \sigma_{23}\\
        \sigma_{24}\\
        \sigma_{33}\\
        \sigma_{34}\\
        \sigma_{44}
    \end{pmatrix} 
    \end{align}
\end{remark}

\begin{figure}[h]
    \centering
\begin{subfigure}[b]{0.3\textwidth}
    \centering
    \begin{tikzpicture}[scale=0.4]
            \tikzset{
                VertexStyle/.style={
                    shape=circle,
                    draw,
                    fill=white,
                    minimum size=10pt,
                    inner sep=0pt,
                    font=\tiny
                },
                EdgeStyle/.style={
                    color=black,
                    line width=0.5pt,
                    draw=none, fill=none, midway
                }
            }
            \Vertex[x=0, y=5.5, label=0, color=gray!0, style={opacity=0}]{n0}
            \Vertex[x=-1, y=0, label=2, color=yellow]{n2}
            \Vertex[x=-3, y=0, label=1, color=cyan]{n1}
            \Vertex[x=1, y=0, label=3, color=magenta]{n3}
            \Vertex[x=3, y=0, label=4, color=violet!50]{n4}
            \Vertex[x=-2, y=2, label=\textcolor{black}{5}, color=white]{n5}
            \Vertex[x=2, y=2, label=\textcolor{white}{6}, color=darkgreen]{n6}
            \Vertex[x=0, y=4, label=\textcolor{white}{7}, color=blue]{n7}

            \draw (n0) -- (n7);
            \draw (n7) -- (n6);
            \draw (n6) -- (n4);
            \draw (n7) -- (n5);
            \draw (n5) -- (n2);
            \draw (n5) -- (n1);
            \draw (n6) -- (n3);
        \end{tikzpicture}
\end{subfigure}
\begin{subfigure}[b]{0.3\textwidth}
\begin{tikzpicture}
    \Vertex[label=$1$,color=cyan]{A}
    \Vertex[x=2,label=$2$,color=yellow]{B}
    \Vertex[y=-2,label=$4$,color=violet!50]{D}
    \Vertex[x=2,y=-2, label=$3$,color=magenta]{C}
    \Edge[color=blue](A)(D)
    \Edge[color=blue](B)(D)
    \Edge[color=green](C)(D)
    \Edge[color=blue](A)(C)
    \Edge[color=blue](B)(C)
\end{tikzpicture}
\end{subfigure}
    \caption{A colored tree with zeroed node~$5$ with non-block BMT derived graph whose vanishing ideal is toric under transformation (\ref{eq:nonblocktoric transform}).}
    \label{fig:non_block_toric}
\end{figure}
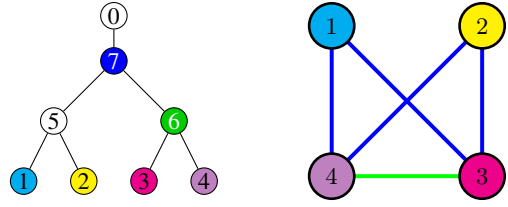

\section{Main result and discussion}\label{sec:6}
Now we can combine \Cref{thm:vertex_regular} and \Cref{thm:deletions} to extend the class of trees with colored and zeroed nodes with toric reciprocal linear space. First, let us introduce some notation. 
Given~$\T$ with BMT derived graph~$\G$, let~$\tilde{\I}_{\overline{\G}}$ be the ideal in~$\C[q_{ij} \mid 0\leq i<j\leq n]$ obtained by substitution of variables in the ideal in \Cref{lemma:jensen_corollary} as follows:~$\sigma_{ii} = q_{0i}\textrm{ and }\sigma_{ij} = q_{ij} \textrm{ for }1\leq i< j \leq n.$

Let the generalized path map for a colored tree~$\T$ with~$Z(T)\neq \emptyset$ be 
\[
\begin{array}{cccl}
     \varphi'_\T: &\C[q_{ij} \mid 0\leq i<j\leq n] &\to &\C[\theta_{\lambda} \mid \lambda\in \Lambda_\T]\\
     &q_{ij} & \mapsto & \begin{cases}
         \theta_{{\lambda(c)}}^2 & \text{if } (i,j) = (0,c) \\
         \prod\limits_{\substack{(\ell, k)\in i \leftrightsquigarrow j\\ k\notin Z(\T)}} \theta_{\lambda(k)} & \text{else}.
     \end{cases}
\end{array}
\]

\begin{theorem}
Let~$\G = (G,\Lambda_\G)$ be a BMT-derived graph from tree~$\T = (T,\Lambda_T, Z(\T))$. Suppose that~$\G$ is a vertex-regular block graph {and let~$\overline{\G}$ be the vertex-regular completion of~$\G$}. Then, the vanishing ideal of~$L_\T^{-1}=\L_\G^{-1}$ under the~$G$-derived Laplacian is~$ \sqrt{\tilde{I}_T + \tilde{\I}_G + \tilde{\I}_{\overline{\G}}}=\ker \varphi'_\T$.
\end{theorem}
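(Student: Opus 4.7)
The plan is to realize $L_\T$ as a three-fold intersection of irreducible linear spaces, verify that each reciprocal factor is toric under the $G$-derived Laplacian $\psi$, and then apply \Cref{thm:ACTUAL_INTERSECTION} iteratively to combine them. Specifically, I would write $L_\T = L_T \cap \L_G \cap \L_{\overline{\G}}$, where $L_T$ is the uncolored BMT linear space of the underlying tree, $\L_G$ is the uncolored graphical model on the underlying block graph $G$, and $\L_{\overline{\G}}$ is the linear space of the vertex-regular completion. Each factor captures one category of defining constraints on $L_\T$: tree-induced equalities from $T$, edge-deletion zeros from $G$, and vertex-color symmetries from $\overline{\G}$. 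Being linear, all three spaces and every intersection among them are irreducible.

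I would then show that each of the three reciprocal varieties has toric vanishing ideal under $\psi$. For $L_T^{-1}$ and $\L_G^{-1}$, the required computations already appear in the proof of \Cref{thm:deletions}: the generators of $I_T$ from \Cref{thm:brownian_toric} and of $\I_G$ from \Cref{thm:misra_sullivant} become, up to sign and modulo previously produced binomials, the generators of $\tilde I_T$ and $\tilde{\I}_G$. For $\L_{\overline{\G}}^{-1}$, I would invoke \Cref{lemma:jensen_corollary}, which identifies $\L_{\overline{\G}}^{-1}$ with $\L_{\overline{\G}}$ and describes its ideal as generated by linear binomials $\sigma_{ii}-\sigma_{jj}$ and $\sigma_{ik}-\sigma_{jk}$ for same-colored $i,j$. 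Vertex-regularity forces $\{i,k\}\in E(G)$ if and only if $\{j,k\}\in E(G)$, so $\psi(\sigma_{ik}-\sigma_{jk}) = \pm(q_{ik}-q_{jk})$ is already binomial. Moreover, \Cref{prop:vertex_reg_iff} combined with \Cref{prop:star_structure} ensures that the central vertex $c$ of the star block graph has a unique color, so $\sigma_{ii}-\sigma_{jj}$ with $\lambda(i)=\lambda(j)$ never involves $c$; after substitution and reduction by the edge-symmetry binomials, each such generator collapses to $q_{0i}-q_{0j}$, recovering exactly $\tilde{\I}_{\overline{\G}}$.

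Applying \Cref{thm:ACTUAL_INTERSECTION} twice (first to $L_T^{-1} \cap \L_G^{-1}$, then intersecting with $\L_{\overline{\G}}^{-1}$), with the identity as the additional invertible change of variables, I obtain that $L_\T^{-1}$ has toric vanishing ideal $\sqrt{\tilde I_T + \tilde{\I}_G + \tilde{\I}_{\overline{\G}}}$ under $\psi$. To identify this ideal with $\ker \varphi'_\T$, I would verify that each generator of the three summand ideals maps to zero under $\varphi'_\T$ — a routine check using the combinatorial description of the path map, since same-colored vertices contribute identical monomials and leaves under zeroed ancestors contribute no $\theta$-factor — yielding containment in $\ker \varphi'_\T$; a dimension count (both ideals are prime of dimension $|\Lambda_\T|$) then forces equality. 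The delicate step is the binomiality check for $\L_{\overline{\G}}^{-1}$: the $G$-derived Laplacian is case-defined depending on whether an edge lies in $G$ or is incident to the central vertex, so a priori a linear binomial could substitute to a polynomial with many terms. Vertex-regularity and the uniqueness of the central vertex's color are precisely what make the resulting expressions collapse to genuine binomials modulo the other generators of $\tilde{\I}_{\overline{\G}}$.
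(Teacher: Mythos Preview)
Your proposal is correct and follows essentially the same route as the paper's proof: the same three-fold decomposition $L_\T = L_T \cap \L_G \cap \L_{\overline{\G}}$, the same appeal to \Cref{thm:deletions} for the first two factors, the same binomiality check for $\L_{\overline{\G}}^{-1}$ under $\psi$ using \Cref{prop:vertex_reg_iff} and \Cref{prop:star_structure} to ensure $i,j\neq c$ and matching edge status, and the same dimension argument for $\ker\varphi'_\T$. The only cosmetic difference is that the paper phrases the edge-matching via $\lca(i,k)=\lca(j,k)$ rather than directly via vertex-regularity, and applies \Cref{thm:ACTUAL_INTERSECTION} once to the triple rather than iteratively, but these are equivalent.
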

\begin{proof}
Let~$\psi$ represent the~$G$-derived Laplacian transformation. We have $\L_{\G} = L_{T}\cap \L_{\G} \cap \L_{\overline{\G}}$. In \Cref{thm:deletions}, we showed that the vanishing ideals of~$L_T^{-1}$ and~$\L_{G}^{-1}$ are the toric ideals~$\Tilde{I}_T$ and~$\Tilde{\I}_G$, respectively, under~$\psi$. {As such, it suffices to show that $\psi(I)$ is binomial, where $I$ is the the vanishing ideal of the vertex-regular completion, ~$\L_{\overline{\G}}^{-1}$.} Recall from \Cref{lemma:jensen_corollary} that the vanishing ideal of~$\L_{\overline{\G}}^{-1}$ is given by:~$$I = \left\langle  \sigma_{ik} - \sigma_{jk}, \ \sigma_{ii}-\sigma_{jj} \mid  i,j \in [n], \lambda_{\overline{G}}(i) = \lambda_{\overline{G}}(j) \text{ in } \overline{\G}, k\in  [n]\setminus \{i,j\}\right\rangle.$$
\Cref{prop:vertex_reg_iff} implies~$\pa(i) = \pa(j)$, so~$\lca(i,k) = \lca(j,k)$ for any~$k\in [n]\setminus\{i,j\}$. By \Cref{prop:star_structure} there is only one leaf with parent equal to the top internal node, so~$\pa(i),\pa(j)\not=c$. 

First, for~$k\in [n]\setminus \{i,j\}$, we get~$\psi(\sigma_{ik} - \sigma_{jk}) = q_{ik}-q_{jk}$ if~$\lca(i,k)\not\in \mathrm{Z}(\T)$, and~$\psi(\sigma_{ik} - \sigma_{jk}) = -q_{ik}+q_{jk}$ if~$\lca(i,k)\in \mathrm{Z}(\T)$.
For the remaining  generators of~$I$ we obtain
\begin{align*}
    \psi(\sigma_{ii} - \sigma_{jj}) &= \psi\left(\sum_{k\not=c}\sigma_{ik} - \sum_{k\not=c}\sigma_{jk}\right) - \psi\left(\sum_{k\not=c,i,j} (\sigma_{ik} - \sigma_{jk})\right) - \psi(\sigma_{ij}- \sigma_{ij})\\
    &= q_{0i}-q_{0j} - \sum_{k\not=c,i,j}\pm (q_{ik}-q_{jk}).
\end{align*}
We see that the sum in the expression above is already in the ideal, as we have~$q_{ki} - q_{kj} \in \psi(I)$ for all~$k\in [n]\setminus \{ i,j\}$. This shows~$\psi(I)$ is the binomial ideal~$ \tilde{\I}_{\overline{\G}}$. From \Cref{thm:deletions} and  \Cref{thm:ACTUAL_INTERSECTION}, it follows that the variety of covariance matrices of~${\G}$, in the star graph Laplacian coordinates, coincides with the toric variety defined by~$\sqrt{\tilde{I}_T+ \tilde{\I}_{G} + \tilde{\I}_{\overline{\G}}}$. The proof for~$\sqrt{\tilde{I}_T+ \tilde{\I}_{G} + \tilde{\I}_{\overline{\G}}}=\ker\varphi_\T'$ is analogous to the last paragraph of the proof of \Cref{thm:deletions}. Generators of ideals~$\tilde{I}_T,\tilde{\I}_{G},\tilde{\I}_{\overline{\G}}$ are mapped to zero by~$\varphi_\T'$, the two ideals have the same dimension equal to the number of colors in the tree, thus the prime ideals are equal.
\end{proof}

\section{Toward phylogenetic networks} 
Phylogenetic trees with colored and zeroed nodes are a step toward a more accurate model for colored phylogenetic networks with species hybridization. As an example, let us revisit the tree~$\T$ from \Cref{fig:intro}.  The network~$\mathcal{N}$ on the right of \Cref{fig:discussion} presents a model corresponding to this tree. Here, node $h$ represents the hybridization of nodes $7'$ and $7''$, both sharing the same color as~$7$. Following the continuous interpretation of the Gaussian structural equation model for trees \cite{sturmfels2019brownian}, we associate to each colored node~$i$ in the network a Gaussian random variable~$\varepsilon_{\lambda(i)}$ with mean~$0$ and variance~$\tau_{\lambda(i)}\geq 0$. 
We define the Markov process on~$\mathcal{N}$ as follows. For each node~$i\neq h$ in~$\mathcal{N}$, starting from the root, inductively  
define random variable~$Y_i = Y_j+\varepsilon_i$, where~$j$ is the parent of~$i$,~$Y_0=0$, and~$Y_h=Y_{7'}+Y_{7''}$.  The parameter~$\tau_\lambda$ can be interpreted as the length of the edges~$\{i,j\}$ with~$\lambda(i)=\lambda$, and it can be related to the parameter~$\theta_\lambda$ that appears in the generalized path map (\ref{eqn:generalized path map}) as~$\tau_\lambda = e^{\theta_\lambda}$. This construction yields a set of multivariate Gaussian random variables that share the same symmetries in their covariance matrix as those obtained from the tree on the left. 

\begin{figure}[ht]
\centering
\begin{subfigure}[b]{0.3\textwidth}
    \centering
        \begin{tikzpicture}[scale=0.4]
            \tikzset{
                VertexStyle/.style={
                    shape=circle,
                    draw,
                    fill=white,
                    minimum size=10pt,
                    inner sep=0pt,
                    font=\tiny
                },
                EdgeStyle/.style={
                    color=black,
                    line width=0.5pt
                }
            }
            \Vertex[x=0, y=-0.5, label=0, color=white, style={opacity=0}]{n0}
            \Vertex[x=-1, y=-6, label=\textcolor{black}{2}, color=cyan]{n2}
            \Vertex[x=-3, y=-6, label=\textcolor{black}{1}, color=cyan]{n1}
            \Vertex[x=1, y=-6, label=3, color=yellow]{n3}
            \Vertex[x=3, y=-6, label=\textcolor{white}{4}, color=darkgreen]{n4}
            \Vertex[x=-2, y=-4.66, label=\textcolor{white}{5}, color=red]{n5}
            \Vertex[x=-1, y=-3.33, label=6, color=white]{n6}
            \Vertex[x=0, y=-2, label=\textcolor{white}{7}, color=blue]{n7}

            \Edge[color=black](n0)(n7)
            \Edge[color=black](n7)(n6)
            \Edge[color=black](n6)(n5)
            \Edge[color=black](n7)(n4)
            \Edge[color=black](n5)(n2)
            \Edge[color=black](n5)(n1)
            \Edge[color=black](n6)(n3)
        \end{tikzpicture}
\end{subfigure}
\begin{subfigure}[b]{0.3\textwidth}
    \centering
    \begin{tikzpicture}[scale=0.4]
            \tikzset{
                VertexStyle/.style={
                    shape=circle,
                    draw,
                    fill=white,
                    minimum size=10pt,
                    inner sep=0pt,
                    font=\tiny
                },
                EdgeStyle/.style={
                    color=black,
                    line width=0.5pt,
                    draw=none, fill=none, midway
                }
            }
            \Vertex[x=0, y=5.5, label=0, color=gray!0, style={opacity=0}]{n0}
            \Vertex[x=-2, y=0, label=2, color=cyan]{n2}
            \Vertex[x=-4, y=0, label=1, color=cyan]{n1}
            \Vertex[x=0, y=0, label=\textcolor{white}{4}, color=darkgreen]{n4}
            \Vertex[x=3, y=0, label=3, color=yellow]{n3}
            \Vertex[x=-3, y=2, label=\textcolor{white}{5}, color=red]{n5}
            \Vertex[x=-2, y=3.75, label=\textcolor{white}{$7'$}, color=blue]{n7}
            \Vertex[x=0, y=3.75, label=\textcolor{white}{$h$}, color=gray]{n8}
            \Vertex[x=2, y=3.75, label=\textcolor{white}{$7''$}, color=blue]{n7'}

            \draw (n0) -- (n7') [black] node [pos=0.3, right, black] {\tiny~$\tau_{b}$};
            \draw (n0) -- (n7) [black] node [pos=0.3, left, black] {\tiny~$\tau_{b}$};
            \draw (n7) -- (n5) [black] node [pos=0.3, left, black] {\tiny~$\tau_{r}$};
            \draw[dashed] (n7') -- (n8) ;
            \draw (n7') -- (n3) [black] node [pos=0.5, right, black] {\tiny~$\tau_{y}$};
            \draw[dashed] (n8) -- (n7) ;
            \draw (n5) -- (n2) [black] node [pos=0.3, right, black] {\tiny~$\tau_{c}$};
            \draw (n5) -- (n1) [black] node [pos=0.3, left, black] {\tiny~$\tau_{c}$};
            \draw (n8) -- (n4) [black] node [pos=0.5, right, black] {\tiny~$\tau_{g}$};
        \end{tikzpicture}
\end{subfigure}
\caption{Tree with colored and zeroed nodes (left) and its corresponding phylogenetic network (right). Node~$h$ is a ``hybridization" of nodes~$7'$ and~$7''$. The tree and the network induce the same symmetries in the covariances between leaves.}
\label{fig:discussion}
\end{figure}

\section*{Acknowledgments}
We thank Elizabeth Allman, Jack Jeffries, Pablo Maz\'on, John Rhodes, and Anna Seigal for helpful discussions, {and the referee for helpful comments}. EC and AM were supported by the National Science Foundation under Grant DMS-2306672. AR was supported by fellowships from ''la
Caixa'' Foundation (ID 100010434), with code LCF/BQ/EU23/12010097, and from RCCHU. Most of the work was done in Pierce Hall 212A (SEAS) at Harvard University. 

\bibliographystyle{plain}
\bibliography{references}

\end{document}